\documentclass[letterpaper,11pt,reqno,twoside]{article}
\usepackage[english]{babel}
\usepackage[utf8]{inputenc}
\usepackage{amsmath,amsfonts,amssymb,amsthm,mathabx,secdot}
\usepackage{etoolbox}
\usepackage{enumerate}
\usepackage{graphicx}
\graphicspath{{images/}}
\usepackage{array,multirow,multicol,adjustbox}
\usepackage{algorithm}
\usepackage{algorithmic}
\usepackage{gnuplotikz}

\usepackage[margin=2cm]{geometry}
\usepackage[scr=pxtx]{mathalpha}
\usepackage{comment}

\newtheorem{thm}{Theorem}[section]
\newtheorem{lem}[thm]{Lemma}

\theoremstyle{remark}
\newtheorem{rmk}[thm]{Remark}
\AtBeginEnvironment{rmk}{%
  \pushQED{\qed}%
}
\AtEndEnvironment{rmk}{%
  \popQED
}
\theoremstyle{definition}
\newtheorem{dfn}[thm]{Definition}

\usepackage{caption}  
\usepackage{subcaption}

\usepackage{xcolor}

\newcommand{\mytitle}{%
Families of relative periodic orbits in the planar three-body problem via consecutive alignments
}

\newcommand{\myshorttitle}{%
RPO families in the PTBP%
}

\usepackage[hidelinks]{hyperref}
\hypersetup{
bookmarksnumbered=true,
bookmarksopen=true,
pdftitle={\myshorttitle{}},
pdfauthor={J. Gimeno, \`A. Jorba, M. Jorba-Cusc\'o, and B. Nicol\'as},
}
\usepackage{orcidlink}

\title{\mytitle{}}
\author{%
Joan Gimeno\,\orcidlink{0000-0002-8707-6379}\textsuperscript{(1)} \and %
\`Angel Jorba\textsuperscript{\textdagger} \and %
Marc Jorba-Cusc\'o\,\orcidlink{0000-0003-0308-3756}\textsuperscript{(2,3)} \and %
Bego\~{n}a Nicol\'as\,\orcidlink{0000-0001-7214-5864}\textsuperscript{(4,\textasteriskcentered)}%
}

\newcommand{\R}{{\mathbb R}}

\newcommand{\Q}{{\mathbb Q}}

\newcommand{\I}{\boldsymbol{i}}

\newcommand{\dop}{\mathrm{D}} 
\newcommand{\idmat}{Id}       

\newcommand{\bydef}{\,\stackrel{\mbox{\tiny\textnormal{\raisebox{0ex}[0ex][0ex]{def}}}}{=}\,}
\newcommand{\ve}[1]{\mathbf{#1}}
\newcommand{\qq}{\ve{q}} 
\newcommand{\pp}{\ve{p}} 
\newcommand{\ee}{\ve{e}} 
\newcommand{\LL}{\ve{L}} 
\newcommand{\llm}{\ve{l}} 
\newcommand{\zz}{\ve{z}} 
\newcommand{\JJ}{\ve{J}} 

\newcommand{\QQ}{\ve{Q}} 
\newcommand{\PP}{\ve{P}} 

\newcommand{\II}{\mathcal{I}} 

\newlength{\mainfigw}
\newlength{\zoomfigw}
\newcommand{\mainplot}[3]{%
    \includegraphics[page=#1,width=\mainfigw]{#2/#3/#3-crop.pdf}%
}

\newcommand{\zoomplot}[3]{%
    \includegraphics[page=#1,width=\zoomfigw]{#2/#3/#3-zoomin-crop.pdf}%
}

\newcommand{\iterTplot}[3]{%
    \includegraphics[page=#1,width=\zoomfigw]{#2/#3/#3-10T-crop.pdf}%
}

\newlength{\maincurvefigw}
\newlength{\rightmaincurvefigw}
\newlength{\ecccurvefigw}
\newcommand{\maincurveplot}[3]{%
    \includegraphics[page=#1,width=\maincurvefigw]{#2/#3-crop.pdf}%
}
\newcommand{\rightmaincurveplot}[3]{%
    \includegraphics[page=#1,width=\rightmaincurvefigw]{#2/#3-crop.pdf}%
}
\newcommand{\eccentrycurveplot}[3]{%
    \includegraphics[page=#1,width=\ecccurvefigw]{#2/#3-crop.pdf}%
}

\newcommand{\rowlab}[1]{%
  \raisebox{10pt}{
    \rotatebox[origin=c]{90}{\footnotesize #1}%
  }
}

\newcommand{\rowlabten}[1]{%
  \raisebox{30pt}{
    \rotatebox[origin=c]{90}{\footnotesize #1}%
  }
}

\begin{document}
\maketitle

{\small
\begin{itemize}
\renewcommand{\itemsep}{.5pt}
\item[(1)] Departament de Matem\`atiques i Inform\`atica, Universitat
  de Barcelona, Gran Via de les Corts Catalanes 585, 08007 Barcelona,
  Spain, {\tt joan@maia.ub.es} 
\item[(2)] Departament de Matem\`atiques, Universitat Polit\`ecnica de Catalunya, 
Av. Diagonal 647, 08028, Barcelona, Spain, 
{\tt marc.jorba@upc.edu} 
\item[(3)] Centre de Recerca Matem\`atica (CRM), 
Edifici C, Campus UAB, 0 Floor, 08193 Bellaterra, Barcelona, Spain
\item[(4)] Departamento de Matem\'atica Aplicada, Universidade de Santiago de Compostela, Rua Lope G\'omez De Marzoa, s/n, 15705, Santiago De Compostela (A Coru\~na), Spain,
{\tt bego.nicolas@usc.es}
\item[\textsuperscript\textasteriskcentered] Corresponding Author

\end{itemize}
}

\begin{abstract}
Relative periodic orbits (RPOs) are solutions of the three-body problem that are periodic in a uniformly rotating reference frame and, in general, quasi-periodic in inertial coordinates. We present a numerical procedure for computing and continuing one-parameter families of RPOs of the planar Newtonian three-body problem. 

The method exploits consecutive syzygies, understood here as configurations in which the three bodies are aligned and their velocities satisfy the corresponding symmetry conditions. Matching the positions and momenta at two consecutive alignments reduces the computation of RPOs to a low-dimensional nonlinear problem. Its solutions are then numerically continued, and linear stability is determined from the nontrivial eigenvalues of the rotated monodromy matrix after removing the neutral directions associated with conserved quantities and continuous symmetries. 

\smallskip

The procedure is applied to several mass distributions and initial configurations, producing families of Poincar\'e, Hill, and binary-type solutions. These families exhibit transitions from nearly circular to highly eccentric motion, changes of stability near resonances and turning points, and absolute periodic solutions when the rotation angle is a rational multiple of $2\pi$. 
In the Hill families, the continuation connects satellite configurations with circumstellar motion as the smallest body loses its gravitational binding to the intermediate body. Circumbinary and circumstellar configurations are also obtained in the binary regime. 

The results illustrate the dynamical diversity of RPOs and provide coherent three-body motions that can be used as prescribed trajectories in restricted four-body models.
\end{abstract}

{\footnotesize {\it 2020 Mathematics Subject Classification:
70F07
; 70H12
; 70H14
; 37M20
}

\medskip

{\bf Keywords:}
three-body problem; relative periodic orbits; syzygies; alignments; quasi-bicircular solutions; pseudo-arclength continuation.
}

\phantomsection\pdfbookmark[1]{\contentsname}{toc}
\pagestyle{myheadings}
\markboth{
\myshorttitle{}}{J. Gimeno, M. Jorba-Cusc\'o, and B. Nicol\'as}
\newpage
{\small \tableofcontents}
\newpage

\section{Introduction}

One of the main concerns in Celestial Mechanics is the understanding of the motion of bodies under
their mutual gravitational influence and of the small particles that are sensitive to the effect of
massive bodies but do not affect their motion. Identifying stable and unstable/chaotic regions
allows us to give an explanation to observed phenomena, like the presence or absence of asteroid
population in a specific region on a system.

The Circular Restricted Three-Body Problem (CRTBP) is among the most extensively studied models in Celestial Mechanics. 
It describes the motion of a massless particle under the gravitational pull of two
massive bodies, called primaries, whose motion is prescribed to be circular, i.e. a solution to the
Kepler Problem |also known as Two-Body problem| with no eccentricity. 
In the synodical reference frame, five equilibrium points, called Lagrangian points,  are known to
exist, being the so-called triangular points, those placed equidistantly to the two primaries. 
These points are linearly stable for the vast majority of two-body systems in our Solar System. 
This is what led Joseph-Louis Lagrange to postulate the presence of asteroid populations in the 
neighborhood of the Sun-Jupiter system triangular points in s.XVIII. Two centuries later, the first Trojan asteroid was discovered and nowadays there are more than ten thousand asteroids cataloged in 
this group. However, in spite of the fact that triangular points of any Sun-planet CRTBP of our solar 
system are stable, the population of asteroids in these neighbourhoods is not as remarkable as in 
the Sun-Jupiter system. 
To address this discrepancy from a dynamical systems perspective, a key question is to identify
which perturbations of the CRTBP induce instability near the triangular points. It is 
well known that using an eccentric orbit for the primaries can trigger instability
near the triangular points (as far as we know, this was first reported in \cite{Danby64}).
However, this mechanism requires the eccentricity to be large when the mass ratio is small, 
which is not common in observed systems. 
Another kind of perturbation that can destabilize the triangular points is the presence 
of a third primary. In fact, an old hypothesis for the absence of large populations of  Trojan analogues (for instance in the Sun-Saturn system) is the non-negligible influence of Jupiter, see \cite{Innanen1989, Holman1993, Marzari2000}. 
Another relevant example is the destabilizing effect of the Sun on the Earth-Moon triangular points described, for instance, in \cite{GomezJMS91a, SimoGJM95}. 

Based on these considerations, one can ask questions such as: ``how close does a third body of a given mass have to be in order to destabilize a triangular point?''. Along similar lines, 
a significant fraction of exoplanet hosts are members of binary (or multiple) systems,
(see \cite{Raghavan2010, Armstrong2014, Horch2014}). Another question regarding the stability of the
triangular points of the planet is: ``Can we approximate the binary system as a single body when
analyzing the Trojans of the planet, or are the third-body interactions relevant?''. Naturally, the
answers to these questions depend on the distances and masses of the three primaries. 
A deep dynamical comprehension of third-primary perturbations on triangular points 
is highly relevant for project TROY, an observational initiative aimed at 
identifying terrestrial planets that are, in fact, Trojans of larger gas giants. 
Relevant works include the foundational paper of the project \cite{LilloBox2018} 
and the tentative detection of the first trojan exoplanet \cite{BalsalobreRuza2023}. 
For a broader perspective on the subject, a short review is provided in \cite{Robutel2024}.  

A first approach to study third-primary perturbations on triangular points  
is to employ an analytical, non-coherent model such as the well-known Bicircular 
Problem (BCP), \cite{Huang60,CroninRR64}.  In this model, the third massive body is assumed to revolve in circular motion around the original setup of the CRTBP. Notice that in BCP the motion 
of the three massive bodies does not correspond to a real solution to the Three-Body Problem (TBP).
Non-coherent models are useful due to their simplicity and have been proved 
to be effective in systems with very well known parameters (such as the Sun-Earth-Moon \cite{ESA1,Jorba2000}). 
However, they have a major flaw when third-body interactions with the other two primaries are relevant, that is: 
``How do we know that the prescribed motion to the primaries is stable?''. 

To bypass this issue one must rely on coherent restricted Four-Body Problems. A coherent 
model requires the motion described to the three primaries to satisfy the equations
of the Three-Body Problem. Although some models employ triangular and collinear configurations 
for the primaries (see, for instance, \cite{corbera2022, idrisi2025}), it is well known that the TBP 
is a non-integrable system that lacks general closed-form solutions (except for the specific 
collinear and triangular setups). Therefore, a more realistic restricted 
four-body formulation implies numerically solving the TBP. This leads to  
semi-analytic vector fields in which the motion of the primaries is 
computed numerically.  To the best of our knowledge, the only coherent model built 
in this manner is the Quasi-Bicircular Problem (QBCP). In this model, 
the underlying three-body solution described by primaries are the so-called 
Quasi-Bicircular Solutions, solutions obtained by refining two coupled Keplerian 
motions when the third-body interaction is small (for instance, a planetary system 
in which the two planets have a small mass compared to the star and their trajectories 
remain far away from each other). The existence of this type of solutions was proved 
by Poincar\'e in \cite[p. 97]{Poincare92}. Existence is guaranteed provided that the planetary
masses are small enough. For realistic values of the parameters, two specific  
systems have used Quasi-Bicircular motion for the three primaries: the Sun-Jupiter-Saturn 
system and the Sun-Earth-Moon. In \cite{hadjidemetriou1980}, 
the Sun-Jupiter-Saturn was considered. In this approach, the equations of motion of the test particle are integrated 
together with the ones of the primaries. Let us mention that the term ``Quasi-Bicircular Solution'' 
was not used yet. This nomenclature was introduced in \cite{Andreu98, Andreu02}. In these 
works, a quasi-bicircular solution for the Sun-Earth-Moon system was obtained. 
Another quasi-bicircular solution was computed in
\cite{GabernJ01} to construct a model for the motion of an asteroid in the
Sun-Jupiter-Saturn system. The paucity of examples in the literature stems from the non-trivial
nature of managing such models. The approaches in the works by Andreu and Gabern are different
from Hadjimetriou's: First, the numerical solution to the TBP must be computed. This
solution is then transformed into the coordinates of the CRTBP to reformulate the model as a
periodic perturbation. Consequently, this procedure involves a suitable manipulation of the
solution's Fourier series which ends in a set of differential equations that is cumbersome 
to manage. 

Quasi-Bicircular solutions are a special case of Relative Periodic Orbits (RPO) to the TBP and, 
belong to one-parameter families (see \cite{henon1974}). 
The purpose of this work is to study these families. 
This raises more fundamental questions: ``What 
are the dynamical limits of these planetary and lunar prescribed motions?'' Or, 
even more fundamentally: ``How close can two planets of given masses coexist in 
a stable configuration?''. Early works on this direction are \cite{hadjidemetriou1976A, delibaltas1976, message1980}, where some planetary (Poincar\'e) type of orbits were analyzed. 
In the present work, we tackle this kind of questions in a more systematic way. 
The aim is to broaden the perspective on the potential motions that 
can be imposed on the primaries when formulating restricted four body problems. 
Therefore, it is mandatory to take a look at the general problem of three bodies.

Recent developments in Jet Transport techniques \cite{GJJMZ23,Forrier23} open the way to improve these last approaches through the development of solutions to the TBP of fully numerical models. Namely, the
benefits of both approaches (the one in \cite{hadjidemetriou1980} and the one in \cite{Andreu98, GabernJ01}) can be achieved.  The goal in the present work is to explain how to systematically construct solutions to the Three-Body Problem based on alignments, that as we will explain, constitute relative periodic solutions to the TBP. 

Many of the solutions found would correspond to the above mentioned, Quasi-Bicircular solutions, since the orbits of the three bodies in those solutions are nearly circular. However, from those, we have continued families of solutions to the TBP that include very eccentric orbits for the three primaries, for which we refer to these families as Relative Periodic solutions to the TBP. This was also found for a particular family in \cite{hadjidemetriou1976A}. 

The structure of the paper is the following. Section~\ref{sec:TBP} is devoted to the Three-Body Problem, to the description of periodic motion, absolute and relative, and to the stability analysis. In Section~\ref{sec:NumComp} we carefully present the conditions and steps to numerically compute our relative periodic solutions of the TBP. Stability of these orbits is also considered. 
The approach in this work to stability is more numerical-linear-algebra driven than classical 
methods in \cite{hadjidemetriou1975A, henon1976}.

Section~\ref{sec:AC} constitutes a thick part of this document; on it we present solutions that reproduce many astronomical systems, be them of planetary (Section~\ref{sec:Poincare}) or satellite (Section~\ref{sec:Hill}) natures, historically known as Poincar\'e and Hill type of solutions, respectively. While in the first case we would have two bodies—such as two planets—orbiting a star in what is known as circumstellar motion, in the second we would have a typical satellite-planet-star configuration. It is remarkable that in our families of Hill solutions we continue solutions where the less massive body (satellite) starts orbiting the second massive body (planet) and eventually gets free from its gravitational bound to become a circumstellar object. Consequently, these solutions also reflect the motion of a small body that orbits a star and eventually can be trapped by the gravitational field of a planet. This is related with the fact that some Near Earth Asteroids (NEAs) become temporary satellites of the Earth \cite{Granvik2012,delaFuenteMarcos2020}.

Besides the families of RPOs based on a hierarchical mass scheme, we also include in Section~\ref{sec:Binary}, another type of possible solutions that we have called binary solutions since they could correspond to a binary star and a planet. 

The dynamical richness of the TBP and the non trivial computation of its solutions has led us to take a major step in the numerical computation of solutions to this model before we could face our initial objective, related to the stability regions close to triangular points under the gravitational effect of three massive bodies. Then, finally in Section~\ref{sec:conclusions}, some conclusions and further work towards the inclusion of a massless particle to our RPOs to the TBP, are given.

\section{The Three-Body Problem}\label{sec:TBP}
Let us consider the planar motion of three punctual bodies $B_0$, $B_1$ and $B_2$ (with 
masses $m_0 , m_1 , m_2 >0$) interacting to 
each other according to the universal gravitational law. Their trajectories and velocities 
of the masses are contained in the phase space of the planar Three-Body Problem (TBP), which is 
governed by the following Ordinary Differential Equation,
\begin{equation}\label{eq:tbp}
    m_i \ddot{\qq_i} = \mathcal{G} \sum_{j\neq i} \frac{m_i m_j}{\|\qq_{i,j}\|^{3}} \qq_{i,j}, \quad i,j\in \{0,1,2\} 
\end{equation}
where $\qq_i \bydef (x_i , y_i)$ is the position of each body with respect to the common center of mass, $\qq_{i,j} \bydef \qq_j - \qq_i$, and $\mathcal{G}$ is the gravitational constant.

The phase space of this system is twelve dimensional i.e. two positions and two velocities for 
each body, namely $(x_i , y_i , \dot{x}_i , \dot{y}_i)$ 
The subspace of all possible positions
(the configuration space, $\mathcal{C_{M}}$) is defined as 
$$\mathcal{C_{M}}= \left \{ (\qq_0 , \qq_1 , \qq_2 )  \in \mathbb{R}^6 ~|~  m_{0}\qq_{0} + m_{1}\qq_{1} + m_{2}\qq_{2}= 0 \right \} \setminus \Delta,$$
where $\Delta= \bigcup_{i,j} \Delta_{i,j}$ and $\Delta_{i,j} =\left \{ \qq = (\qq_0 , \qq_1 , \qq_2) \in \mathbb{R}^6 ~|~ \qq_i = \qq_j \right \}$ for $i,j\in \{0,1,2\}$.  

The flow can reach the space $\Delta$ in finite time. Namely, the flow of the 
TBP is \textbf{incomplete}. When such a situation happens, we say that the solution 
encounters a \textbf{collision}. In the context of the TBP, collisions 
may be double (these can be regularized analytically \cite{Siegel1941} and topologically
\cite{Easton1971}) or triple. When the number of bodies 
interacting is strictly larger than three, the flow can reach singularities 
without collision. The set of initial conditions that lead to collision has null
Lebesgue measure and it is of Baire first category. Classical works by Donald Saari 
tackle these questions. See \cite{Saari1971, Saari1973, Saari1975}. 
Triple collisions can almost never be analytically regularized 
and never topologically regularized \cite{Marchal1982}
Moreover, according to Marchal's lemma, action minimizing solutions  do not contain 
singularities \cite{Marchal2002}.  

It is usual to name the potential 
$$U(\qq) =  \mathcal{G} \sum_{j\neq i} \frac{m_i m_j}{\|\qq_{i,j}\|},$$
and the masses matrix $M=\operatorname{diag}( m_0 , m_0, m_1 , m_1 , m_2 , m_2)$, so 
equation \eqref{eq:tbp} can be written in compact form 
$$M \ddot{\qq} = \nabla U (\qq).$$
Moreover, defining the momenta as $\pp_i = m_i \dot{\qq}_i$, the Hamiltonian of 
the system is
\begin{equation}\label{eq:tbp_ham}
\mathcal{H}(\qq, \pp) = \frac{1}{2} \pp^{T} M^{-1} \pp - U (\qq).
\end{equation}
The Hamiltonian function is, of course, a conserved quantity also known as energy first integral.
Besides, there are two other known independent conserved quantities: The integral of the center of 
mass,
\begin{equation}\label{eq:tbp_cm}
\sum_{i=0}^{2} m_i \qq_i = 0,
\end{equation}
and the integral of the angular momentum 
\begin{equation}\label{eq:tbp_am}
    \LL=\sum_{i=0}^{2} \qq_{i} \times \pp_i.
\end{equation}
\begin{rmk}[Two dimensional cross product]\label{rmk:2dcp}
The operator $\times$, the cross product in two dimensions, is defined as follows: 
Given $\ve{a}=(a_1 , a_2),~\ve{b}=(b_1 , b_2) \in \mathbb{R}^{2}$ then 
$\ve{a}\times\ve{b}=a_{1}b_{2} - b_{1} a_{2}$.
\end{rmk}

The quantity 
\begin{equation}\label{eq:tbp_MI}
    I=\sum_{i=0}^{2} m_i \| \qq_i \|^2, 
\end{equation}
is called \textbf{moment of inertia}. It is a crucial quantity to address standard qualitative 
properties of the general many body problem. The variation of the moment of inertia can 
be studied via the \textbf{Lagrange-Jacobi} identity: 
\begin{equation}\label{eq:LJI}
    \ddot{I}=4\mathcal{H}+2U.
\end{equation}
Standard global qualitative behavior follows from \eqref{eq:LJI}, some results are 
\begin{itemize}
    \item If $\mathcal{H}\geq 0$, $I$ is a strictly concave function (i.e. with a single minimum) and therefore the mutual distances of the bodies tend to infinity, therefore, 
    no bounded solutions are possible. In the limit case when $\mathcal{H}=0$, 
    the escape is parabolic, namely $I(t)\sim I_{0}+I_{1}t^{2/3}$.
    \item The case $\mathcal{H} < 0$ is the only case that allows bounded mutual distance 
    and, therefore, bounded solutions. Moreover, solutions in which all bodies go 
    to infinity simultaneously are also prevented. In fact, from \eqref{eq:LJI}, it 
    follows 
    \begin{equation}\label{eq:tbp_GMA}
        \inf_{i,j} \qq_{i,j} \leq 2a \bydef - \frac{\mathcal{G}}{\mathcal{H}} \sum_{j\neq i} m_{i} m_{j}  
    \end{equation}
     The quantity $2a$ is called \textbf{generalized major-axis}.
    
    \item Triple collisions occur when $I \to 0$. Moreover, if a triple collision 
    takes place, the angular momentum $\LL$, defined as \eqref{eq:tbp_am}, vanishes. This last statement is commonly 
    known as \textbf{Sundman's Theorem}. 
\end{itemize}

Summarizing, the planar TBP is a Hamiltonian system with six degrees of freedom and, hence, it  has 
a phase space of twelve dimensions described by a system of twelve ordinary differential 
equations. It is a well established fact that constants of motion can be used to remove some 
degrees of freedom from the original equation. 
The conservation of the center of mass \eqref{eq:tbp_cm} can be used 
to isolate the positions of one of the bodies (for instance $B_0$) with respect to the remaining two,
$$ \qq_0 =-\frac{m_1}{m_0} \qq_1 - \frac{m_2}{m_0} \qq_2, ~~~\pp_{0}=-(\pp_{1}+\pp_{2}).$$
Notice that the momenta are isolated from the conservation of linear momentum, another 
first integral that can be obtained deriving, with respect to the time, equation \eqref{eq:tbp_cm}.
Thus, a degree of freedom (or equivalently, two equations) can be removed. An additional 
degree of freedom can be removed by 
using the scalar equations of the energy \eqref{eq:tbp_ham} and the angular 
momentum \eqref{eq:tbp_am}. Therefore, the TBP can be reduced to a Hamiltonian system with 
three degrees of freedom (or six differential equations). 
\begin{rmk}[Symmetries of the TBP]\label{rmk:sym_TBP} 
A number of symmetries hold for the TBP. Given a solution of $(\qq , \pp)$  of \eqref{eq:tbp},
the following operations on $\qq$ produce a new solution $\tilde{\qq}$:  
\begin{enumerate}[i)]
    \item \label{it:sym1} $\tilde{\qq}=\qq+ \ve{a}+ \ve{b} t$, $\ve{a}, \ve{b} \in \mathbb{R}^{6}$, 
    \item \label{it:sym2} $\tilde{\qq}=\qq(t+T)$, $T\in \mathbb{R}$, 
    \item \label{it:sym3} $\tilde{\qq}_i = ( x_i , -y_i)$, $i=0,1,2$, 
    \item \label{it:sym4} $\tilde{\qq}(t)=\qq(2T-t)$, $T\in \mathbb{R}$,
    \item \label{it:sym5} $\tilde{\qq}(t)=\left (\mathcal{R}_{\alpha} \qq_0 (t), \mathcal{R}_{\alpha} \qq_1 (t), \mathcal{R}_{\alpha} \qq_2 (t) \right )$, $\alpha \in \mathbb{T}$ and $\mathcal{R}_{\alpha}$ 
    the $2\times 2$ rotation matrix of angle $\alpha$,
    \item \label{it:sym6} $\tilde{\qq}(t)= \kappa^{2} \qq ( t / \kappa^{3})$, $\kappa \in \mathbb{R}$. \qedhere
\end{enumerate}
\end{rmk}
The last symmetry implies that the equations of motion of the TBP are scale-invariant. Meaning
that the units of time and length are tied. Typically one takes the gravitational
constant $\mathcal{G}$ to be the unit. 
Also one can fix the sum of all the masses to be one. Then, there 
is freedom to pick the units of length or units of time. In the present work we will take 
advantage to fix the position of $B_1$ at time $0$. This will be clarified later, once 
the right context is set.

\subsection{Relative Periodic Orbits}\label{sec:pmot}
The study of periodic orbits is of paramount importance. This was already highlighted by 
Poincar\'e as he proposed them as the starting point for the systematic study of the 
three-body problem. In this context, there are two classes of periodic orbits: \textbf{Absolute
Periodic Orbits (APO)} and \textbf{Relative Periodic Orbits (RPO)}.

Absolute periodic solutions (or just periodic orbits) to the TBP are those such that the 
positions and velocities of all the bodies return to their initial values after a certain time, 
the period. In the case of RPO, the bodies do not necessarily 
return to their absolute initial position, but their relative positions are periodic 
functions. More commonly, RPO are said to be periodic on a rotating 
frame, i.e. if $\gamma$ denotes a solution it is a relative periodic orbit if 
there exist a block-diagonal 
rotation $\ve{R}_{\theta}$ of angle $\theta$ and a value $T>0$ such that 
$\gamma(t+T)=\ve{R}_{\theta}\gamma(t)$. RPO are, in fact, APO 
if $\theta/(2\pi) \in \mathbb{Q}$ and quasi-periodic if not 
(see \cite[p. 41]{Montgomery2024} for a discussion). Notice that, in the literature,  it 
is usual for the authors to not distinguish between absolute and relative periodic orbits 
as knowing a single period is enough to determine the solution for all time. 

APO are isolated in the three-body problems while RPO lie in one-parameter 
families \cite{henon1974}. The celebrated Broucke-Hadjidemetriou-H\'enon 
family is a continuation onto the 
planar space from the Schubart orbit \cite{schubart1956, broucke1975B, hadjidemetriou1975A,henon1976}. RPO families can be obtained also from 
continuation of families of periodic orbits of the CRTBP for positive 
(yet small enough) values of the massless particle (see \cite{hadjidemetriou1975B, bozis1976B, delibaltas1976, message1980}).

When considering the study of the planetary solutions to the TBP (i.e. situations 
in which one of the masses is much larger than the other two), Poincar\'e studied 
RPO and categorized them into three different species (or kinds): 
\begin{enumerate}
\renewcommand{\theenumi}{\arabic{enumi}}
\renewcommand{\labelenumi}{\theenumi.)}
    \item Orbits of \textbf{first species} are orbits which have no inclinations 
    and very small eccentricities.
    \item Orbits of \textbf{second species} are orbits that are planar but have 
    large eccentricities. 
    \item Orbits of \textbf{third species} are orbits with small eccentricities 
    but with large mutual inclination. 
\end{enumerate}

Orbits of all the three kinds can be analytically continued from Keplerian orbits 
when the interaction of two of the bodies is ignored: The three-body problem is written as 
two two-body problems coupled by the (small) interaction of two bodies in suitable coordinates 
and  the Implicit Function Theorem is applied to obtain the existence of the periodic orbits 
(see \cite[p. 97]{Poincare92}).  We will see in Section~\ref{sec:AC} that, moreover, 
orbits of first kind can be also numerically continued to orbits of second kind. 

It has been mentioned before in this text that, a primary motivation for this work is 
to produce coherent restricted four body problems. In this context, orbits of first kind 
are called Quasi-Bicircular. In previous works, those solutions have been used to describe 
the motion of two planets around a star (for instance, Jupiter and Saturn revolving around 
the Sun in \cite{GabernJ01, Gabern03}) and also to describe the motion of a moon, 
a planet and a star (e.g. the Earth-Moon-Sun system \cite{AndreuS00a, Andreu02}). 
These configurations of orbits of the first kind are usually called planetary solutions and 
lunar solutions respectively. 

Acknowledging the contributions of their pioneering works, 
we shall refer to the planetary solutions as \textbf{Poincar\'e type} and to the lunar
solutions as \textbf{Hill type}, as they were first studied 
in \cite{Hill1878a, Hill1878b, Hill1878c}. Both, Poincar\'e and Hill solutions, are RPO whose 
angle $\theta$ corresponds to the angle between two lines that contain the bodies at two 
distinct times, see Figure~\ref{fig:qbs}. This is a characteristic that can be exploited to 
compute Poincar\'e, Hill and other type of RPO as we will see in Sections \ref{sec:NumComp} and
\ref{sec:AC}. In other texts, Poincar\'e solutions are called planetary solutions and Hill solutions
may be called lunar solutions. 

\begin{figure}[ht]
\centering
    \begin{subfigure}{0.45\linewidth}
        \includegraphics[width=\linewidth]{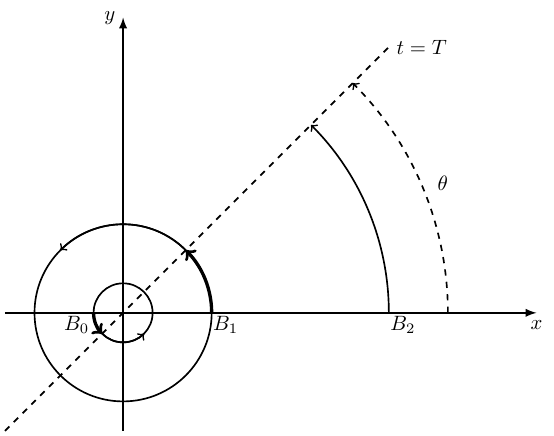}
        \caption{Poincar\'e type}
        \label{fig:qbs_1}
    \end{subfigure} \quad
    \begin{subfigure}{0.45\linewidth}
        \includegraphics[width=\linewidth]{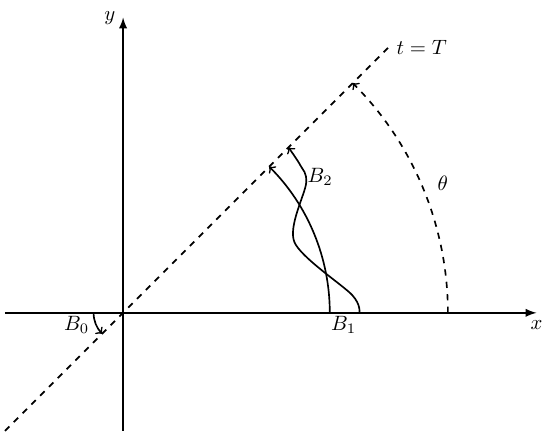}
        \caption{Hill type}
        \label{fig:qbs_2}
    \end{subfigure}
\caption{Quasi-bicircular solutions (solutions of first kind) of the planar Three-Body Problem. 
Panel \eqref{fig:qbs_1}: A Poincar\'e type solution in which two planets revolve around a star. 
Panel \eqref{fig:qbs_2}: A Hill type solution with a natural satellite revolving around a 
planet while this one revolves around a star. See text for more details.}\label{fig:qbs}
\end{figure}

\subsection{Stability of RPO}
In this section we handle stability of RPO. In classical theory of Ordinary Differential 
Equations, the stability of a periodic orbit is given by the monodromy matrix i.e. 
the first variational flow evaluated at the period. This approach can be applied 
directly to APO of the TBP but the case of RPO is slightly different. As RPO are periodic 
only in a rotating reference, this rotation is to be incorporated to the monodromy matrix. 
Recall, RPO are, generically, quasi-periodic orbits. 

Given an RPO $\gamma$ with angle $\theta$, we define the \textbf{rotated monodromy matrix}
as 
\begin{equation}\label{eq:romo}
\mathcal{M}=\ve{R}_{-\theta} \operatorname{D} \phi_{T}(\gamma(0)). 
\end{equation}
Here, $\ve{R}_{\theta}=\operatorname{diag}(\mathcal{R}_{\theta}, \dotsc, \mathcal{R}_{\theta}) \in \mathbb{M}_{12}(\mathbb{R})$.

\begin{lem}[Eigenvalues related to the RPO angle]\label{lem:JB}
Let $\gamma$ be an RPO of system \eqref{eq:tbp}, its associated rotated monodromy matrix defined as in \eqref{eq:romo} has
two $2\times2$ Jordan blocks whose eigenvalues are $e^{\pm \I \theta}$. 
\end{lem}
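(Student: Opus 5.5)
The plan is to exhibit an explicit four-dimensional invariant subspace of $\mathcal{M}$ coming from the translational symmetries of Remark~\ref{rmk:sym_TBP}\,\ref{it:sym1}). On it $\mathcal{M}$ should have the block form $\bigl(\begin{smallmatrix}\mathcal{R}_{-\theta} & T\mathcal{R}_{-\theta}\\ 0 & \mathcal{R}_{-\theta}\end{smallmatrix}\bigr)$. Its complexification then splits into two $2\times 2$ Jordan blocks with eigenvalues $e^{\pm\I\theta}$. Throughout I regard $\phi_t$ as the flow of \eqref{eq:tbp} on the full twelve-dimensional phase space in the variables $(\qq,\pp)$, with $\pp_i=m_i\dot\qq_i$. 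The center of mass condition \eqref{eq:tbp_cm} is a property of the particular orbit $\gamma$, not a restriction on the flow.

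\textbf{Step 1 (the two families of vectors).} For $\ee\in\R^2$ define the translation vector $v_{\ee}=(\ee,\ee,\ee;0,0,0)$ and the boost vector $w_{\ee}=(0,0,0;m_0\ee,m_1\ee,m_2\ee)$. If $(\qq(t),\pp(t))$ is a solution, then so is $(\qq(t)+s\ee,\pp(t))$, because the right-hand side of \eqref{eq:tbp} depends only on differences $\qq_{i,j}$. Likewise $(\qq(t)+st\ee,\ \pp_i(t)+s m_i\ee)$ is a solution, since the accelerations are unchanged. Differentiating both families at $s=0$ and evaluating at $t=T$ gives, at any base point,
\begin{equation*}
\dop\phi_T\, v_{\ee}=v_{\ee},\qquad \dop\phi_T\, w_{\ee}=w_{\ee}+T\,v_{\ee}.
\end{equation*}

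\textbf{Step 2 (compose with the rotation).} Since $\ve{R}_{-\theta}$ rotates every $2$-vector block, $\ve{R}_{-\theta}v_{\ee}=v_{\mathcal{R}_{-\theta}\ee}$ and $\ve{R}_{-\theta}w_{\ee}=w_{\mathcal{R}_{-\theta}\ee}$. Hence the span $V$ of $\{v_{\ee},w_{\ee}:\ee\in\R^2\}$ is four-dimensional, since $v$'s and $w$'s live in complementary position/momentum parts and $m_i>0$. Moreover $V$ is $\mathcal{M}$-invariant. In the basis $(v_{e_1},v_{e_2},w_{e_1},w_{e_2})$, $\mathcal{M}|_V$ has the block upper triangular form displayed above.

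\textbf{Step 3 (Jordan structure).} Complexify and use $\ee_\pm=(1,\mp\I)$, which are eigenvectors of $\mathcal{R}_{-\theta}$ with eigenvalues $e^{\pm\I\theta}$. Then $\mathcal{M}v_{\ee_\pm}=e^{\pm\I\theta}v_{\ee_\pm}$. Also $(\mathcal{M}-e^{\pm\I\theta})w_{\ee_\pm}=T e^{\pm\I\theta}v_{\ee_\pm}\neq0$, using $T>0$. So each of $e^{\pm\I\theta}$ carries a genuine $2\times2$ Jordan chain $\{v_{\ee_\pm},w_{\ee_\pm}\}$. The real version is the pair of real $4\times4$ blocks above; equivalently there are two $2\times 2$ Jordan blocks.

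\textbf{Main obstacle.} The only delicate point is interpreting the statement in degenerate cases. If $\theta\in\{0,\pi\}$, then $e^{\I\theta}=e^{-\I\theta}$ and the two blocks share the same eigenvalue. I would still get two Jordan blocks for that eigenvalue, since the chains are built from the independent $\ee_\pm$ or, in the real case, from $e_1,e_2$.

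A second point is making sure the Jordan chains do not extend further by coupling with the other neutral directions (energy, angular momentum, time shift, rotation). Those directions give eigenvalue $1$ and are associated with the complementary, center-of-mass-fixed subspace. For $\theta\notin 2\pi\mathbb{Z}$ the eigenvalues differ, so no mixing occurs. For $\theta\in2\pi\mathbb{Z}$ I would argue via the splitting $\R^{12}=V\oplus V'$, where $V'=\{\sum m_i\qq_i=0,\ \sum\pp_i=0\}$. This splitting is invariant under both $\dop\phi_T$ and $\ve{R}_{-\theta}$, because the center-of-mass motion decouples.
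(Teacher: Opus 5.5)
Your argument is correct and is essentially the paper's: the paper also isolates the four center-of-mass/total-momentum directions (exactly your span of translation and boost vectors), where free motion produces the block $\bigl(\begin{smallmatrix}\mathcal{R}_{-\theta} & (T/m_0)\mathcal{R}_{-\theta}\\ 0 & \mathcal{R}_{-\theta}\end{smallmatrix}\bigr)$, and it reads off algebraic multiplicity two and geometric multiplicity one; your invariant splitting $V\oplus V'$ additionally supplies the decoupling that the paper takes for granted. There are two harmless slips. First, $\mathcal{R}_{-\theta}(1,-\I)=e^{-\I\theta}(1,-\I)$, so your labels on $\ee_\pm$ are swapped. Second, ``the eigenvalues differ'' is not by itself a valid reason when $\theta\notin 2\pi\mathbb{Z}$, since a nontrivial multiplier on $V'$ may equal $e^{\pm\I\theta}$; however, your splitting argument works for every $\theta$ and makes that remark unnecessary.
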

\begin{proof}
    As it has been stated in previous section, one can use the conservation of the 
    center of mass and the conservation of the linear momentum to write 
    the vector $(\qq_0 , \pp_0 ) = - \left(\frac{m_{1}}{m_0} \qq_1 + \frac{m_{2}}{m_0} \qq_2, \pp_1 + \pp_2 \right)$
    The corresponding $4\times 4$ block of the Jacobian is
    $$
    \begin{pmatrix}
        0_{2}& \frac{1}{m_0} \idmat_2\\
        0_2 & 0_2
    \end{pmatrix}.
    $$
    Therefore, integrating over $T$ units of time and rotating, 
    the block of the rotated monodromy matrix is: 
    $$
    \begin{pmatrix}
        \mathcal{R}_{-\theta}& \frac{T}{m_0}  \mathcal{R}_{-\theta}\\
        0_2 & \mathcal{R}_{-\theta} 
    \end{pmatrix}.
    $$
    Here, it is straightforward that both eigenvalues $e^{\pm \I \theta}$ have algebraic multiplicity two. The upper-right block makes their geometric multiplicity equal to 1. 
\end{proof}

\begin{lem}[Multiplicity of the eigenvalue 1]\label{lem:EV1}
    Let $\gamma$ be an RPO of system \eqref{eq:tbp} but reduced by the conservation 
    laws of the center of mass and the linear momentum. The 
    associated rotated monodromy matrix $\mathcal{M} \in \mathbb{M}_{8}(\mathbb{R})$ 
    has 1 as an eigenvalue with multiplicity at least $4$. 
\end{lem}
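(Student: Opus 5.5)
The plan is to produce four independent generalized eigenvectors of $\mathcal{M}=\ve{R}_{-\theta}\operatorname{D}\phi_T(\gamma(0))$ for the eigenvalue $1$. They come in two pairs. One pair comes from time translation together with the energy integral. The other comes from the rotation symmetry (symmetry \ref{it:sym5} of Remark~\ref{rmk:sym_TBP}) together with the angular momentum integral \eqref{eq:tbp_am}. Throughout we work in the reduced $8$-dimensional phase space obtained after eliminating $(\qq_0,\pp_0)$, as in the proof of Lemma~\ref{lem:JB}. On this space the flow $\phi_t$ commutes with $\ve{R}_\alpha$ for every $\alpha$, and $\mathcal{H}$ and $\LL$ are invariant under $\ve{R}_\alpha$.

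\textbf{Step 1 (symmetry directions).} Let $X$ be the vector field, and set $v_1=X(\gamma(0))$. Differentiating $\phi_T(\phi_s(\gamma(0)))=\phi_s(\phi_T(\gamma(0)))$ at $s=0$ gives $\operatorname{D}\phi_T(\gamma(0))X(\gamma(0))=X(\gamma(T))$. Equivariance gives $X(\ve{R}_\theta z)=\ve{R}_\theta X(z)$, and $\gamma(T)=\ve{R}_\theta\gamma(0)$, so $\mathcal{M}v_1=v_1$. Next let $v_2=\frac{d}{d\alpha}\big|_{\alpha=0}\ve{R}_\alpha\gamma(0)=J\gamma(0)$, where $J$ is the block rotation generator. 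Differentiating $\phi_T(\ve{R}_\alpha\gamma(0))=\ve{R}_\alpha\phi_T(\gamma(0))$ in $\alpha$ yields $\operatorname{D}\phi_T v_2=J\ve{R}_\theta\gamma(0)=\ve{R}_\theta v_2$. Since $J$ commutes with $\ve{R}_\theta$, we get $\mathcal{M}v_2=v_2$. These two vectors are independent unless $\gamma$ is a relative equilibrium. I will assume that case is excluded (a genuine RPO), or else treat it separately.

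\textbf{Step 2 (integrals give left eigenvectors and Jordan partners).} For any invariant $F\in\{\mathcal{H},\LL\}$, the relation $F\circ\phi_T=F$ gives $\nabla F(\gamma(T))^{T}\operatorname{D}\phi_T=\nabla F(\gamma(0))^{T}$. Rotation invariance gives $\nabla F(\ve{R}_\theta z)=\ve{R}_\theta\nabla F(z)$, and since $\ve{R}_\theta$ is orthogonal, $\nabla F(\gamma(0))^{T}\mathcal{M}=\nabla F(\gamma(0))^{T}$. So $\nabla\mathcal{H}$ and $\nabla\LL$ are left eigenvectors for $1$, and the symplectic structure pairs them with $v_1$ and $v_2$ respectively.

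To get algebraic multiplicity $4$ rather than just $2$, I will use the standard argument for periodic orbits in a one-parameter family. Differentiate the family of RPOs $\gamma_h$ with respect to energy, with $T(h)$ and $\theta(h)$ varying; such families exist by \cite{henon1974}. Differentiating $\ve{R}_{-\theta(h)}\phi_{T(h)}(\gamma_h(0))=\gamma_h(0)$ gives
$$(\mathcal{M}-\idmat)w = -T'(h)\,v_1+\theta'(h)\,v_2,$$
where $w=\partial_h\gamma_h(0)$ satisfies $\nabla\mathcal{H}\cdot w=1$, so $w\notin\operatorname{span}\{v_1,v_2\}$. The analogous variation in angular momentum at fixed energy gives a fourth vector $u$ with $\nabla\LL\cdot u=1$ and $(\mathcal{M}-\idmat)u\in\operatorname{span}\{v_1,v_2\}$.

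\textbf{Step 3 (the obstacle).} The hard part is avoiding reliance on the existence of a two-parameter smooth family in $(\mathcal{H},\LL)$. An alternative that avoids it is purely linear-algebraic. The matrix $\mathcal{M}$ is symplectic, and it preserves the symplectic orthogonal $W=\{\nabla\mathcal{H}\}^{\perp}\cap\{\nabla\LL\}^{\perp}$, which contains $v_1$ and $v_2$. Consider the induced map on the quotient $\R^8/W$, which is $2$-dimensional. There $\mathcal{M}$ acts as the identity, because the functionals $\nabla\mathcal{H}$ and $\nabla\LL$ are $\mathcal{M}$-invariant; this already contributes the eigenvalue $1$ twice. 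Next, $\operatorname{span}\{v_1,v_2\}\subset W$ is fixed, contributing two more. If $\operatorname{span}\{v_1,v_2\}$ is isotropic (it is, since $\{\mathcal{H},\LL\}=0$), these counts do not overlap. That gives algebraic multiplicity at least $4$, provided $v_1,v_2$ are independent and $\nabla\mathcal{H},\nabla\LL$ are independent at $\gamma(0)$. Verifying that nondegeneracy, which fails only at relative equilibria, is the step I expect to need care.
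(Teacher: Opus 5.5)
Your proof is correct. Steps~1 and~2 coincide with the paper's proof: the same four objects ($X(\zz)$ and $\ve{K}\zz$ fixed by $\mathcal{M}$, $\nabla\mathcal{H}$ and $\nabla\LL$ as left eigenvectors), obtained by the same differentiations.

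The two routes diverge in how these objects become a multiplicity count. The paper just lists them as ``four eigenvector--eigenvalue pairs''. That is quick, and it hands Algorithm~\ref{alg:symmetry_deflation} its four input vectors. Taken literally, though, two right and two left eigenvectors only give geometric multiplicity $\ge 2$: a matrix such as $\operatorname{diag}(1,1,2,2,2,2,2,2)$ has both, yet its multiplicity is $2$.

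Your Step~3 supplies what the paper never states, namely the relations $\nabla\mathcal{H}\cdot v_j=\nabla\LL\cdot v_j=0$. These are just conservation ($\nabla\II\cdot X=0$) and rotation invariance ($\nabla\II\cdot\ve{K}\zz=0$) of each integral. So neither the symplecticity of $\mathcal{M}$ nor the family-differentiation argument of Step~2 is needed. With these relations, $W=\ker d\mathcal{H}\cap\ker d\LL$ is $\mathcal{M}$-invariant and contains $v_1,v_2$. Hence $\chi_{\mathcal{M}}(\lambda)=\chi_{\mathcal{M}|_W}(\lambda)\,(\lambda-1)^2$, with $(\lambda-1)^2$ dividing $\chi_{\mathcal{M}|_W}$. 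The same flag $\operatorname{span}\{v_1,v_2\}\subset W$ is also what makes the projection in Algorithm~\ref{alg:symmetry_deflation} return exactly the nontrivial eigenvalues.

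Your route also exposes a hypothesis the paper leaves implicit. Since $\ve{K}\zz=\pm\JJ\nabla\LL$, both of your independence conditions reduce to $\nabla\mathcal{H}(\zz)$ not being a multiple of $\nabla\LL(\zz)$. This fails exactly at relative equilibria, and there your caveat is needed, not cosmetic. A uniformly rotating Lagrange or Euler configuration satisfies the RPO condition for every $T$, with $\theta=\omega T$. Its homographic (eccentricity) directions give eigenvalues $e^{\pm i\omega T}$ of $\mathcal{M}$. So for generic $T$ the eigenvalue $1$ has multiplicity only $2$, and the lemma should be read for RPOs that are not relative equilibria.
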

\begin{proof}
    Let $\ve{z}=(\qq,\pp) \in \mathbb{R}^{8}$ a point on the reduced phase space. 
    To avoid cumbersome notation we will write the reduced vector field as  
    $\dot{\zz} = \JJ \nabla \mathcal{H}(\zz)$. Here, $\JJ$ is 
    the standard $8\times8$ symplectic matrix and $\mathcal{H}$ is the reduced hamiltonian. 
    Let us also denote $\phi_{t}(\zz)$ the flow map of the reduced system. 
    If $\zz= \gamma(0)$ then, there exist numbers $T$ and $\theta$
    such that $\phi_{T}(\zz)= \ve{R}_{\theta} \zz $. The rotated monodromy matrix is 
    $\mathcal{M}= \ve{R}_{-\theta} \dop \phi_{T}(\zz)$. Again, $\ve{R}_{\theta}$ stands 
    for the operator in the reduced space.
    Let us construct, explicitly, the four pairs eigenvector-eigenvalue.
    \begin{itemize}
        \item \textbf{The vector field:} As the system is autonomous, the vector field is an eigenvector of the 
        linearized system (this is a classical common fact that comes from direct 
        application of the Chain Rule. Therefore, 
        $$\dop \phi_T \JJ \mathcal{H}\nabla (\zz) = \JJ \nabla \mathcal{H}(\phi_{T}(\zz)).$$
        Moreover, the vector field commutes with the block-diagonal rotation operator, 
        $$\JJ \nabla \mathcal{H}(\ve{R}_{\theta} \zz) = \ve{R}_{\theta} \JJ \nabla \mathcal{H}(\zz).$$
        It follows, combining those expressions, that 
        $$\mathcal{M} \JJ \nabla \mathcal{H}(\zz)=\JJ \nabla \mathcal{H}(\zz). $$
    
        \item \textbf{Energy and Angular Momentum:} Let us assume that we are given a conserved quantity $\II$ which 
        is invariant under a rotation $\ve{R}_{\theta}$, i.e., 
        \begin{equation}\label{eq:InvRot}
        \II ( \ve{R}_{\theta} \zz) = \II(\zz).
        \end{equation}
        
        Notice that this is the case for both, the energy $\mathcal{H}$ and 
        the angular momentum $\LL$. Because those are conserved quantities, it 
        holds that 
        $$\II( \phi_{T}(\zz) ) = \II (\zz). $$
        By differentiating with respect to $\zz$, we get 
        \begin{equation}\label{eq:lev}
        \nabla \II ( \phi_{T} (\zz) )^{\top} \dop \phi_{T} (\zz) = \nabla \II (\zz)^{\top}.
        \end{equation}
        And using that $\zz$ lies in an RPO, so $\phi_{T}(\zz) = \ve{R}_{\theta} \zz$ and the 
        rotational invariance of the conserved quantity $\II$ it follows that $\nabla \II$
        is a left eigenvector of the matrix $\dop \phi_{T}$. Differentiating \eqref{eq:InvRot}
        with respect to $\zz$, we get: 
        $$\nabla \II ( \ve{R}_{\theta}(\zz))^{\top} \ve{R}_{\theta} = \nabla \II(\zz)^{\top}.$$
        And composing with $\ve{R}_{-\theta}$, we get 
        $$\nabla \II (\phi_T (\zz))^\top=\nabla \II (\zz)^{\top} \ve{R}_{-\theta}.$$
        Combining the last expression with \eqref{eq:lev} we get that $\nabla \II$ is also 
        a left eigenvector of the rotated monodromy matrix $\mathcal{M}$. 
    
        \item \textbf{Rotation invariance:} Let us consider the $2\times 2$ generator of the rotation 
        group 
        $$\mathcal{K}=\begin{pmatrix}
            0 & -1\\
            1 & 0
        \end{pmatrix},$$
        and define its extension to the reduced phase space $\ve{K}=\operatorname{diag}(\mathcal{K},\dots, \mathcal{K}) \in \mathbb{M}_{8}(\mathbb{R})$. Notice that, given any rotation 
        $\ve{R}_\alpha$, it commutes with the generator, namely $\ve{K}\ve{R}_{\alpha}= \ve{R}_{\alpha} \ve{K}$. Moreover, if $\ve{R}_{\alpha}$ is an arbitrary rotation, by 
        the rotational symmetry of the flow, 
        $$\phi_{T}( \ve{R}_{\alpha} \zz ) = \ve{R}_{\alpha} \phi_T (\zz),$$
        Differentiating the last equation with respect to $\alpha$, one gets
        $$\dop \phi_T \ve{K} \zz =\ve{K} \phi_T (\zz),$$
        and, the RPO condition $\phi_{T}(\zz)=\ve{R}_{\theta} \zz$ leads to 
        $$\dop \phi_T \ve{K} \zz =\ve{K} \ve{R}_{\theta} (\zz).$$
        Now, we can apply the commutativity between $\ve{K}$ and rotations and compose with $\ve{R}_{-\theta}$ on the left. It follows that 
        $$\mathcal{M} \ve{K} \zz = \ve{K} \zz,$$
        so the vector $\ve{K}\zz$ is also an eigenvector of eigenvalue 1.  \qedhere
    \end{itemize}
\end{proof}
As a consequence of Lemma~\ref{lem:EV1}, the rotated monodromy matrix in the full phase space has eight  
eigenvalues and four of them are equal to one. The two eigenvalues related to the conservation of mass
and linear momentum (See Lemma~\ref{lem:JB}) do not appear in the reduced phase space of 8 dimensions.
Usually, this reduction is straightforward and the numerical implementations start with 8 
differential equations. While the other conserved quantities can be used to produce further 
reductions (and study the equations of motions in the shape sphere), the conserved quantities 
can be used to deflate the $8\times 8$ matrix into a $4\times4$ matrix with no eigenvalues equal to 
1. This is especially useful from the numerical point of view, as the Jordan blocks associated
to those eigenvalues cause a remarkable accumulation of error (see, for instance \cite{GolubV96}). 

The deflation algorithm is based on using the four eigenvectors from Lemma~\ref{lem:EV1}
to obtain the span of the eigenspace related to the eigenvalue 1. Then, via Singular 
Value Decomposition (SVD), we obtain a generator of its orthogonal subspace and, 
finally, project the $8\times 8$ matrix onto the orthogonal subspace to get the fully 
reduced matrix with no trivial eigenvalues. This procedure is detailed in algorithm~\ref{alg:symmetry_deflation}.

\begin{algorithm}
\caption{Symmetry Deflation via Subspace Projection}\label{alg:symmetry_deflation}
\begin{algorithmic}[1]
\REQUIRE Initial state vector $\mathbf{z} \in \mathbb{R}^8$, Rotated Monodromy Matrix $\mathcal{M} \in \mathbb{M}_{8}(\mathbb{R})$
\ENSURE Non-trivial eigenvalues $\{\lambda_5, \lambda_6, \lambda_7, \lambda_8\}$

\STATE \COMMENT{Evaluate symmetry and first integral configurations at the initial state}
\STATE $\mathbf{v}_1 \leftarrow \mathbf{J}\mathcal{H}(\mathbf{z})$ \hfill \COMMENT{Vector-field}
\STATE $\mathbf{v}_2 \leftarrow \mathbf{K}\mathbf{z}$ \hfill \COMMENT{Rotation generator}
\STATE $\mathbf{w}_1 \leftarrow \nabla \mathcal{H}(\mathbf{z})$ \hfill \COMMENT{Gradient of the Hamiltonian}
\STATE $\mathbf{w}_2 \leftarrow \nabla \mathbf{L}(\mathbf{z})$ \hfill \COMMENT{Gradient of the Angular Momentum}

\STATE $W \leftarrow \begin{bmatrix} \mathbf{v}_1 & \mathbf{v}_2 & \mathbf{w}_1 & \mathbf{w}_2 \end{bmatrix} \in \mathbb{R}^{8 \times 4}$ \hfill \COMMENT{Assemble the unconstrained generalized eigenspace matrix}

\STATE $V \leftarrow \text{SVD}(W) \quad \text{where } V \in \mathbb{R}^{8 \times 4},\ V^{\top}V=\idmat _4$ \hfill 
\COMMENT{Extract the orthogonal complement via SVD}

\STATE $\mathcal{M}_{\text{red}} \leftarrow V^{\top} \mathcal{M} V \in \mathcal{M}_{4}(\mathbb{R})$ \hfill 
\COMMENT{Perform the subspace reduction projection}

\STATE $\{\lambda_5, \lambda_6, \lambda_7, \lambda_8\} \leftarrow \text{eig}(\mathcal{M}_{\text{red}})$ 
\hfill \COMMENT{Compute the non-trivial roots}

\RETURN $\{\lambda_5, \lambda_6, \lambda_7, \lambda_8\}$
\end{algorithmic}
\end{algorithm}

\section{Numerical computation of RPO}\label{sec:NumComp}
In this section we discuss numerical continuation of RPO. Our strategy 
consists of looking for two consecutive alignments of the bodies (i.e. 
syzygy conditions). In section~\ref{sec:syzygy} we describe the algebraic 
conditions we use to identify syzygies and in section~\ref{sec:continuation} we 
describe a scheme to numerically continue RPO.

\subsection{The syzygy condition}\label{sec:syzygy}
The term ``syzygy'', in astronomy, refers to a configuration of three or more  
bodies in which all of them lie in a straight-line for a certain time $t_0$. In \cite{diacu1989}, 
the following definition is given for a system of $N\geq 3$ bodies in the plane
\begin{dfn}
    A solution of the Equations~\eqref{eq:tbp} is called syzygy solution if there exists a date $t_0$ and a line $\Lambda_0$ which contains all $N$ bodies at this date.
\end{dfn}
In this work, 
it is proven that given $(\qq, \pp)$ a solution of \eqref{eq:tbp} is in   
a syzygy configuration for $t_0$ if and only if,
\begin{equation}\label{eq:syzygy_condition}
    (f\circ \qq)(t_0)=0, 
\end{equation}
where, 
$$f(\qq) = \qq_0 \times \qq_1 + \qq_1 \times \qq_2 + \qq_2 \times \qq_0,$$
and the $2$-dimensional cross product is defined as in Remark~\ref{rmk:2dcp}. The latter condition can be also written in terms of coordinates, namely, 
$$
f(\qq) = x_0 y_1 - x_1 y_0 + x_1 y_2 - x_2 y_1 + x_2 y_0 - x_0 y_2=(x_1 -x_0)(y_2-y_0)-(x_2 -x_0)(y_1 -y_0). 
$$

A definition of a generalized syzygy solution for three bodies whose positions or velocities are collinear at some finite time, has been also addressed in recent work \cite{Tsygvintsev2023}. However, the kind of solutions we are interested in imply alignments on both, positions and velocities, at the same time.

Syzygy condition \eqref{eq:syzygy_condition} determines a section in the phase space 
of the TBP which is, moreover, transversal to the flow (that is, the syzygy condition
determines a Poincar\'e section). A geometrical interpretation of this condition is the signed 
area of the parallelogram formed by the vectors $\qq_{1,0}$ and $\qq_{2,0}$, i.e. 
the vector $(\qq, \pp)$ is in syzygy if and only if 
$$\operatorname{det}(\qq_{1,0}, \qq_{2,0})=0.$$
Then, condition \eqref{eq:syzygy_condition} is uniquely referred to relative positions of the three bodies of the system. 

The syzygy condition \eqref{eq:syzygy_condition} can be also simplified as the following result shows
\begin{lem}\label{lem:simplesyzygy}
    The condition $\{x_{1} y_{2} = x_{2} y_{1}\}$ is also a characterization of the syzygy condition.
\end{lem}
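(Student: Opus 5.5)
The plan is to use the center-of-mass constraint $m_0\qq_0+m_1\qq_1+m_2\qq_2=0$, which is built into the configuration space $\mathcal{C_M}$. This lets us rewrite $f(\qq)$ from \eqref{eq:syzygy_condition} as a nonzero multiple of $\qq_1\times\qq_2 = x_1y_2-x_2y_1$. The statement then follows at once from the characterization in \cite{diacu1989} that syzygy is equivalent to $f(\qq)=0$.

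The first step is to substitute $\qq_0 = -\frac{m_1}{m_0}\qq_1-\frac{m_2}{m_0}\qq_2$ into
$$f(\qq)=\qq_0\times\qq_1+\qq_1\times\qq_2+\qq_2\times\qq_0 .$$
The two-dimensional cross product of Remark~\ref{rmk:2dcp} is bilinear and antisymmetric, so $\qq_i\times\qq_i=0$ and $\qq_2\times\qq_1=-\qq_1\times\qq_2$. This gives
$$\qq_0\times\qq_1 = -\tfrac{m_2}{m_0}\,\qq_2\times\qq_1=\tfrac{m_2}{m_0}\,\qq_1\times\qq_2,$$
$$\qq_2\times\qq_0 = -\tfrac{m_1}{m_0}\,\qq_2\times\qq_1=\tfrac{m_1}{m_0}\,\qq_1\times\qq_2 .$$
Adding the three terms yields
$$f(\qq)=\Big(1+\tfrac{m_1+m_2}{m_0}\Big)\,\qq_1\times\qq_2=\frac{m_0+m_1+m_2}{m_0}\,(x_1y_2-x_2y_1).$$

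The second step is to observe that the prefactor is strictly positive, since all masses are positive. Hence $f(\qq)=0$ if and only if $x_1y_2=x_2y_1$, and the lemma follows by invoking the cited characterization.

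If one prefers a geometric check that does not rely on \cite{diacu1989}, argue as follows. Suppose $\qq_1$ and $\qq_2$ are parallel, i.e.\ they lie on a common line through the origin. Then $\qq_0$, being a linear combination of them, lies on the same line, so all three bodies are aligned. Conversely, suppose the three bodies lie on a line $\Lambda_0$. The center of mass is a convex combination of the positions, so it lies on $\Lambda_0$; here it is the origin. Therefore $\qq_1$ and $\qq_2$ lie on a line through the origin and $\qq_1\times\qq_2=0$.

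No step is a genuine obstacle. The only point needing care is to state explicitly that positions are measured from the center of mass. Without that normalization the simplified condition is false, whereas $f$ is translation invariant. I would remark that this is exactly why the reduced coordinates of Section~\ref{sec:TBP} are used.
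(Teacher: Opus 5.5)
Your proposal is correct and follows the paper's route. You substitute the center-of-mass relation for $\qq_0$ and show that $f(\qq)=\det(\qq_{0,1},\qq_{0,2})$ is a positive multiple of $\det(\qq_1,\qq_2)=x_1y_2-x_2y_1$; the paper expands the determinant of the two linear combinations instead of going term by term, but it is the same computation. Your constant $(m_0+m_1+m_2)/m_0$ is the correct one (the paper's displayed $(m_0+m_1+m_2)/m_0^2$ is a harmless slip, since only nonvanishing matters), and your geometric converse is a useful self-contained substitute for the appeal to Diacu's characterization.
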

\begin{proof}
    The condition $\Sigma=\{x_{1} y_{2} = x_{2} y_{1}\}$ can be rewritten as $\operatorname{det}(\qq_1 , \qq_2)=0$. 
    Therefore, by the conservation of the center of masses, we have: 
    
    \begin{align*}
    \det(\qq_{0,1},\qq_{0,2})&= \det( \qq_1 - \qq_0 , \qq_2 - \qq_0)\\
    &=\det\left ( \left ( \frac{m_0 + m_1}{m_0} \right ) \qq_1 + \frac{m_2}{m_0}\qq_2 , \frac{m_1}{m_0}\qq_1 +   \left ( \frac{m_0 + m_2}{m_0} \right ) \qq_2\right)\\
    &=\left(\frac{m_0+m_1 +m_2}{m_{0}^2}\right) \det(\qq_1, \qq_2). 
    \qedhere
    \end{align*}    
\end{proof}
Notice that this condition is the one used in \cite{broucke1975B}.

\subsection{A continuation scheme}\label{sec:continuation}
The syzygy condition $\Sigma$ appearing in Lemma~\ref{lem:simplesyzygy} is, in 
fact, a Poincar\'e section. Thus, $\Sigma$ is helpful to compute RPO. The idea is 
to integrate a suitable initial condition (where the three bodies are aligned) 
until the next syzygy takes place. Moreover, additional conditions can be imposed 
to ensure that the resulting orbit is an RPO. Let $(\qq, \pp)$ be an initial 
condition and $(\QQ, \PP)=\phi_{T}(\qq, \pp)$ where $T$ is the time to the 
next alignment so $f(\QQ)=0$ and define the following objective function 

\begin{equation}\label{eq:OF}
\begin{aligned}
    G \colon \R^{8} &\longrightarrow \R^6 \\
    \zz &\longmapsto \left ( G_{1}(\zz), G_{2}(\zz), G_{3}(\zz), G_{4}(\zz), G_5(\zz), G_6(\zz) \right),
\end{aligned}
\end{equation}
where $\zz=(\qq, \pp)$, and 
\begin{multicols}{2}
\begin{enumerate}[i)]
    \item \label{it:c1} $G_1(\zz)=\|\QQ_1\|^2-\|\qq_1\|^2$,
    \item \label{it:c2} $G_2(\zz)=\|\QQ_2\|^2-\|\qq_2\|^2$,
    \item \label{it:c3} $G_3(\zz)=\|\PP_1\|^2-\|\pp_1\|^2$,
    \item \label{it:c4} $G_4(\zz)=\|\PP_2\|^2-\|\pp_2\|^2$,
    \item \label{it:c5} $G_5(\zz)=\QQ_1\cdot\PP_1$,
    \item \label{it:c6} $G_6(\zz)=\QQ_2\cdot\PP_2$.
\end{enumerate}
\end{multicols}
Here, $\|\cdot\|$ is the standard euclidean norm and $\cdot$ the standard 
inner product. Equations \ref{it:c1}) and \ref{it:c2}) measure the discrepancy of 
the distance to the barycenter between the initial and the final alignment. 
Equations \ref{it:c3}) and \ref{it:c4}) measure the discrepancies of the individual 
momenta of the bodies and \ref{it:c5}) and \ref{it:c6}) measure orthogonality of 
the velocity with respect to the positions. 
\begin{lem}\label{lem:minimizer}
    Let $\zz=(\qq, \pp)$ and $\ve{Z}=(\QQ, \PP)=\phi_{T}(\qq,\pp)$ 
    be two consecutive syzygies with alignment time $T$ and angle $\theta$. Suppose that $G(\zz)=0$, then $\zz$ lies 
    in an RPO, namely $\phi_{T} (\zz) = \ve{R}_{\theta} \zz$. 
\end{lem}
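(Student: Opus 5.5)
The plan is to show that the conditions $G(\zz)=0$, together with the syzygy structure at both endpoints, force $\QQ_i=\mathcal{R}_\theta\qq_i$ and $\PP_i=\mathcal{R}_\theta\pp_i$ for $i=1,2$. The statement for $B_0$ then follows from the centre-of-mass and linear-momentum relations.

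\textbf{Standing assumption.} I read the statement with the normalisation described at the start of the section: the initial condition is itself a syzygy whose velocities satisfy the symmetry conditions, that is, $\qq_1\cdot\pp_1=\qq_2\cdot\pp_2=0$. This is the initial-time analogue of $G_5=G_6=0$. I would make this assumption explicit at the start, since without it the lemma cannot hold.

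\textbf{Step 1: positions.} By Lemma~\ref{lem:simplesyzygy}, at a syzygy $\det(\qq_1,\qq_2)=0$. Because the barycentre is at the origin, all three bodies lie on a line $\Lambda_0$ through $0$. Likewise $\QQ_0,\QQ_1,\QQ_2$ lie on a line $\Lambda_1$ through $0$.
\begin{itemize}
\item Define $\theta$ as the angle with $\QQ_1=\mathcal{R}_\theta\qq_1$. This is possible because $G_1=0$ gives $\|\QQ_1\|=\|\qq_1\|$, and it is the alignment angle of the statement.
\item Write $\qq_2=\lambda\qq_1$ and $\QQ_2=\mu\QQ_1$. Then $G_2=0$ gives $|\mu|=|\lambda|$.
\item To get $\mu=\lambda$, I would use that $\lambda$ and $\mu$ cannot change sign between consecutive syzygies. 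A sign change would require either $B_2$ to pass through $B_1$ (a collision), or $\qq_1$ or $\qq_2$ to pass through the origin. The latter forces a syzygy in between, contradicting consecutiveness. Hence $\QQ_2=\mathcal{R}_\theta\qq_2$, and then $\QQ_0=\mathcal{R}_\theta\qq_0$.
\end{itemize}

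\textbf{Step 2: momenta.} Since $\pp_i\perp\qq_i$ and $\PP_i\perp\QQ_i$, and $G_3=G_4=0$ give equal norms, we get $\PP_i=s_i\,\mathcal{R}_\theta\pp_i$ with $s_i\in\{\pm1\}$. The remaining task is to show $s_1=s_2=1$.
\begin{itemize}
\item Perpendicularity makes each term $\qq_i\times\pp_i$ equal to $\pm\|\qq_i\|\|\pp_i\|$, and rotations preserve the cross product. Conservation of $\LL$ in the reduced coordinates therefore gives a linear relation $(1-s_1)a_1+(1-s_2)a_2+(\text{cross terms from }B_0)=0$.
\item Conservation of $\mathcal{H}$ gives a second relation. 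Since the positions are rotated copies, $U$ is unchanged, so the kinetic energy is equal at both ends. Through $\pp_0=-(\pp_1+\pp_2)$, this yields $(1-s_1s_2)\,\pp_1\cdot\pp_2=0$.
\item Excluding $s_1=s_2=-1$ requires a third argument. Here I would use the reversibility symmetry \ref{it:sym3})--\ref{it:sym4}) of Remark~\ref{rmk:sym_TBP}. The configuration $\PP=-\mathcal{R}_\theta\pp$ would be the reflected, time-reversed image of $\zz$, which is incompatible with the sense of rotation being preserved. Equivalently, $\LL\neq0$ would change sign, contradicting conservation of $\LL$.
\end{itemize}
With $s_1=s_2=1$, momentum balance gives $\PP_0=\mathcal{R}_\theta\pp_0$, and therefore $\phi_T(\zz)=\ve{R}_\theta\zz$.

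\textbf{Step 3: periodicity in the rotating frame.} This is immediate. By the rotational equivariance $\phi_t\circ\ve{R}_\theta=\ve{R}_\theta\circ\phi_t$ (symmetry \ref{it:sym5})) and uniqueness of solutions, $\phi_{t+T}(\zz)=\ve{R}_\theta\phi_t(\zz)$ for all $t$, so $\gamma$ is an RPO.

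\textbf{Main obstacle.} I expect the hardest part to be excluding the sign ambiguities, namely the mixed and fully reversed signs in Step~2 and the collinear ordering in Step~1. The norm and orthogonality conditions by themselves only fix the vectors up to reflection. If the conservation-law argument leaves a degenerate case, I would fall back on the geometric one: a reflection across $\Lambda_1$ reverses the sense of angular momentum. I would also state explicitly any genericity assumption needed, such as $\LL\neq0$ and $\pp_1\cdot\pp_2\neq0$.
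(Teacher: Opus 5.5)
\paragraph{Gap in Step~1.} The step that fails is the claim that $\lambda$ and $\mu$ cannot change sign between consecutive syzygies. That is a one-dimensional intuition. For $t\in(0,T)$ the bodies are not collinear, so $\lambda(t)$ is undefined and no continuity argument is available. What does hold is weaker. Let $\psi(t)$ be the angle from $\qq_1(t)$ to $\qq_2(t)$. Since $\det(\qq_1,\qq_2)=\|\qq_1\|\|\qq_2\|\sin\psi\neq0$ on $(0,T)$, $\psi$ stays in $(0,\pi)$ or in $(-\pi,0)$ there. But it can leave $0$ and arrive at $\pm\pi$ with no collision and without either $\qq_i$ passing through the barycentre.

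This is exactly the generic situation in the Poincar\'e regime: the zero of $\det(\qq_1,\qq_2)$ that follows a conjunction ($\lambda>0$) is an opposition ($\mu<0$). So $G_1=G_2=0$ only give $\QQ_2=\pm\mathcal{R}_\theta\qq_2$, and nothing in your argument excludes the minus sign. The problem is not just in the proof. In the uncoupled limit $m_1,m_2\to0$ with circular orbits, the arc from a conjunction to the next opposition satisfies all six equations $G=0$. Yet $\phi_T(\zz)\neq\ve{R}_\theta\zz$ for every $\theta$, because $\QQ_1,\QQ_2$ point in opposite directions while $\qq_1,\qq_2$ point the same way. Your Step~2 inherits the problem, since your energy identity uses $U(\QQ)=U(\qq)$, which fails once the ordering flips.

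\paragraph{How the paper handles it, and the repair.} The paper does not derive this sign from the dynamics either. It reads the hypothesis as saying that the second alignment is the rotated copy of the first (``the second alignment lies in $\mathcal{R}_\theta\ee_1$''). That is, the bodies sit on the new line in the same order, and on the same sides of the barycentre, as on the initial one; then $G_1=G_2=0$ give $\QQ_i=\mathcal{R}_\theta\qq_i$ at once. This is also what the computed families realise. Their second alignment is the next conjunction, skipping the intermediate opposition; compare the periods $T$ with the synodic period $T_s$ of \eqref{eq:Ts}.

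So replace your sign argument by this same-type hypothesis. State it next to your perpendicularity assumption $\qq_i\cdot\pp_i=0$. The paper also uses that assumption tacitly, and you are right that it is necessary, since rotations preserve $\qq_i\cdot\pp_i$.

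Once the positions are settled, your Step~2 is correct and is essentially the paper's route: conservation of $\LL$, together with $G_3=\dots=G_6=0$, fixes the momentum signs. Your energy identity $(1-s_1s_2)\,\pp_1\cdot\pp_2=0$ is valid. So is the observation that $s_1=s_2=-1$ would force $\LL=0$. The appeal to reversibility adds nothing beyond that.
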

\begin{proof}
    By symmetry \ref{it:sym5}) (see Remark~\ref{rmk:sym_TBP}) one can assume, 
    without loss of generality, that the initial alignment takes place at 
    the horizontal axis (i.e. $y_1 = y_2 =0$). By definition, the second alignment 
    takes place at a line which is a rotation of angle $\theta$ of the horizontal 
    axis. It follows that the second alignment lies in $\mathcal{R}_{\theta} \ve{e}_1$, 
    where $\ve{e}_1$ is the first vector of the standard 
    $\mathbb{R}^2$ basis. Equations \ref{it:c1}) and \ref{it:c2}) imply that 
    $\QQ_i = \mathcal{R}_\theta \qq_i$, $i=1,2$. 
    Similarly, from equations \ref{it:c5}) and \ref{it:c6}), it follows that $\PP_i$ are 
    contained in a perpendicular line to the one of the second alignment. 
    Conservation of the angular momentum together with equations \ref{it:c3}) and \ref{it:c4}) imply $\PP_i = \mathcal{R}_{\theta} \pp_i$ so $\phi_T(\zz)=\ve{Z}=\ve{R}_{\theta}\zz$ and the result follows. 
\end{proof}
\begin{rmk}[Overdetermined system]\label{rmk:ods}
    A relevant takeaway from the proof of Lemma~\ref{lem:minimizer} is the following: 
    As one can always suppose that the original alignment takes place at 
    $\Sigma_0=\{y_1=y_2=0\}$, it can be considered the restriction  
    $G\colon\Sigma_0 \to \mathbb{R}^6$. Moreover, if $\zz\in \Sigma_0$ and 
    $\dot{x}_1=\dot{x}_2=0$, $G_5 (\zz)=0$ and $G_6 (\zz)= 0$, time 
    reversibility implies $G_i (\zz)=0$ for $i\in\{1,2,3,4\}$. Hence 
    $\operatorname{dim} \bigl ( \operatorname{ker} \dop G(\zz) \bigr )=4$.
\end{rmk}

Finding an RPO that is a zero of \eqref{eq:OF} can be formulated 
as a Fixed-Point problem if the second alignment is rotated back 
to $\Sigma_0$. Indeed, let us consider 
\begin{equation}\label{eq:SPM}
\begin{aligned}
    \mathcal{P} \colon \Sigma_0 &\longrightarrow \Sigma_0 \\
    \zz &\longmapsto \ve{R}_{-\theta(\zz)} \phi_{T(\zz)}(\zz) ,
\end{aligned}
\end{equation}
where $T(\zz)$ is the travel time between the two syzygies and $\theta(\zz)$ the 
angle of the second alignment with respect to $\Sigma_0$. 

From symmetry \ref{it:sym6}) in Remark~\ref{rmk:2dcp} one can always consider 
an explicit time-rescaling so $x_1=1$ at the initial alignment. Thus, one extra 
variable can be removed. The previous observations imply that only three variables 
are required to determine the RPO: $x_2$, $\dot{y}_{1}$ and $\dot{y}_{2}$. Having 
an initial condition such that $G_5 ( x_2 , \dot{y}_{1}, \dot{y}_{2})=0$, 
$G_6 ( x_2 , \dot{y}_{1}, \dot{y}_{2})=0$ a standard application of the 
Implicit Function Theorem leads to the existence of a curve 
$$x_{2} \mapsto \left ( \dot{y}_{1} (x_{2}), \dot{y}_{2} (x_2) \right ).$$
This is a one-parameter family of RPO whose parameter is, essentially, the 
distance between bodies $1$ and $2$ (recall that the first body is fixed at 
$x_1$, so $d(B_2, B_1)=x_2 - 1$). In this sense, the closer $x_2$ is to one in 
the family, the larger the interaction of the three bodies is. 

By evaluating initial conditions on the syzygy section $\Sigma_0$ under 
the assumption of perpendicular velocity alignment, the full state 
vector is uniquely determined by the three components $(x_2, \dot{y}_1, \dot{y}_2)\in \mathbb{R}^3$. We can, hence, restrict $G$ to the 
subspace spanned by these three variables. A continuation algorithm for RPO together 
with the computation of their stability is sketched in Algorithm~\ref{alg:rpo_curve}. 
Notice that Remark~\ref{rmk:ods} is handled here by using a Gauss-Newton corrector 
step to avoid further reductions. This is similar to the approach suggested in \cite{wulff2008}.

\begin{algorithm}[h!]
\caption{Pseudo-Arclength Continuation and Stability of the RPO Curve}
\label{alg:rpo_curve}
\begin{algorithmic}[1]
\REQUIRE Initial seed RPO point $\zz^{(0)} \in \mathbb{R}^8$, step-size $\Delta s$, tolerance $\epsilon$, maximum steps $K_{\max}$
\ENSURE Curve of symmetric RPOs $\{\zz^{(k)}\}_{k=1}^K$ and their stability multipliers

\STATE $k \gets 1$
\STATE Compute initial approximated tangent vector $\mathbf{t}_0 \in \mathbb{R}^8$ with $\|\mathbf{t}_0\| = 1$.

\WHILE{$k \le K_{\max}$}

    \STATE \COMMENT{\textbf{Step 1: Predictor Step}}
    \STATE $\zz_{\text{pred}} \gets \zz^{(k-1)} + \Delta s \cdot \mathbf{t}_{k-1}$
    \STATE $\zz \gets \zz_{\text{pred}}$
    \STATE $\text{converged} \gets \text{False}$
    
    \STATE \COMMENT{\textbf{Step 2: Corrector Step (Gauss-Newton)}}
    \WHILE{NOT $\text{converged}$}
        \STATE Integrate flow $\Phi_t(\zz)$ to consecutive syzygy s to get $T(\zz), \theta(\zz)$
        \STATE Evaluate residual vector $G(\zz)  \in \mathbb{R}^6$
        
        \IF{$\|G(\zz)\| < \epsilon$}
            \STATE $\text{converged} \gets \text{True}$
        \ELSE
            \STATE $\Delta \zz \gets - \dop G(\zz)^\dagger G(\zz)$ \hfill\COMMENT{least squares solution, operator $\dagger$ denotes the pseudo-inverse}
            \STATE $\zz \gets \zz + \Delta \zz$
        \ENDIF
    \ENDWHILE
    
    \STATE \COMMENT{\textbf{Step 3: Save Point \& Update Tangent}}
    \STATE $\zz^{(k)} \gets \zz$
    \STATE Compute new tangent $\mathbf{t}_k = \pm (\zz^{k}-\zz^{k-1})/\|\zz^{k}-\zz^{k-1}\|$ such that $\mathbf{t}_k \cdot \mathbf{t}_{k-1} > 0$
    
    \STATE \COMMENT{\textbf{Step 4: Stability Analysis}}
    \STATE \textbf{Call} \text{Algorithm~\ref{alg:symmetry_deflation}}($~\zz^{(k)}, T(\zz^{(k)}), \theta(\zz^{(k)})$)
    
    \STATE $k \gets k + 1$
\ENDWHILE

\RETURN $\{\zz^{(k)}\}_{k=1}^K$
\end{algorithmic}
\end{algorithm}

\section{Dynamical diversity of RPO families in the Three-Body Problem}\label{sec:AC}

The procedure presented in this work allows us to numerically compute families of RPOs to the TBP systematically given a set of masses. The nature of these families is sensitive to the initial relative positions and velocities among the three bodies, however it seems less sensitive to other parameters as the exact values of the masses. The richness of the problem prevents a complete generic analysis, for which in this section we examine a variety of academic examples.

In Section~\ref{sec:Poincare} we present some examples of families of Poincar\'e RPO. These families were firstly numerically computed in the late 70's by \cite{hadjidemetriou1976A} as an analytic continuation of symmetric circular periodic orbits of the CRTBP in a rotating reference frame. Due to the different methodology and the advancement of numerical techniques, we will find some discrepancies with their work among many similarities. Then, Section~\ref{sec:Hill} is devoted to Hill families of RPO, where as mentioned in the Introduction, the smallest body will transit from satellite regime to circumstellar. A last analysis, in Section~\ref{sec:Binary}, is concerned about Binary type of solutions. 

As we will see, many of these families are representative of real systems situations, capturing well-known typical characteristics and astrodynamical phenomena. However, some of the computed solutions of the TBP do not correspond to any known astronomical situation. They are included since they constitute an academic interest as solutions to the Three-Body Problem.

As several dynamical features and analysis steps are shared among the different families of relative periodic orbits, the first case study is presented in greater detail to illustrate the recurring phenomena and general characteristics of the solutions. The following sections will therefore emphasize the particular features of each family, highlighting their specific dynamical behavior while avoiding repetition of aspects already discussed.

\subsection{Poincar\'e solutions}\label{sec:Poincare}
Let's first start studying some cases of the Poincar\'e (or planetary) solutions, see Figure~\ref{fig:qbs_1}. In this kind of solutions, each of the bodies $1$ and $2$, $B_1$ and $B_2$, is expected to behave as in a two-body problem with respect to the body $0$, $B_0$, where the mass of $B_0$ is significantly larger than the masses of the other two bodies, and whereas bodies $1$ and $2$ do not approach. Then, if those two conditions hold, we could study the motion of the smaller bodies as in two decoupled two-body problems; one for the bodies $0$ and $1$, and other one for the bodies $0$ and $2$. In this situation, the possible gravitational interaction between $B_1$ and $B_2$ is not relevant, as if they do not seem to ``see each other''.

Following for example \cite{Pollard66}, for each Kepler (two-body) problem, the eccentricity vector of body $i\in \{1,2\}$ with respect to body $0$ can be defined as:
\begin{equation}
    \ee_i = \cfrac{1}{\mu_i} (\dot{\qq}_{i,0} \times \llm_{i,0}) - \cfrac{\qq_{i,0}}{|\qq_{i,0}|},
\end{equation}
where $\mu_i=G(m_0+m_i)$ is the reduced mass of the Kepler problem for bodies $0$ and $i$, and $\qq_{i,0}$, $\dot{\qq}_{i,0}$ and $\llm_{i,0}$ are the relative position, velocity and angular momentum per unit of mass of body $i$ with respect to body $0$, respectively. 
Since we are working on the planar case, the angular momentum is entirely contained in the vertical axis, perpendicular to the plane of motion. Besides, in the initial condition, at $t=0$, the three bodies are aligned in the $x-$axis, with null $x$-velocity component. Then $\llm_{i,0}=\qq_{i,0} \times \dot{\qq}_{i,0} = [0, 0, x_{i,0} \dot{y}_{i,0} ],$
what results in the eccentricity vector for each of the two Kepler problems to be contained in the $x-$axis:
\begin{equation}
    \ee_i = \Bigg[ \cfrac{x_{i,0} \dot{y}_{i,0}^2}{\mu_i} -1, 0, 0 \Bigg]. \label{eq:ecvec}  
\end{equation}
This vector is known for being a conserved quantity of the Kepler problem and for pointing towards the periapsis of the orbit. Obviously, the keplerian eccentricity orbital element corresponds to $e_i=|\ee_i|$ for each body $i\in \{1,2\}$ orbiting around $B_0$. 

Both, $B_1$ and $B_2$, are placed at the positive $x-$axis at $t=0$. 
Then, if the evaluation of Equation~\eqref{eq:ecvec} in the initial coordinates of a computed solution results in a positive value of the $x$-component of the eccentricity vectors, $e_{x_i}>0$, the bodies would be occupying the periapsis of their respective orbits, so their semimajor axis would be approximated by $a_i=x_i/(1-e_i^2)$. Otherwise, they would be occupying the apoapsis of their respective orbits, and then $a_i=x_i/(1+e_i^2)$. 
Also notice that if the described orbit is circular, $e=0$, the following relation holds 
\begin{equation}
\cfrac{x_{i,0} \dot{y}_{i,0}^2}{\mu_i} = 1.    
\label{eq:rel-x-vy}
\end{equation}

Regarding $m_0\gg m_1,m_2$, then in normalized units $\mu_1\approx\mu_2\approx1$. Besides, for $B_1$, we have $x_{1,0}=x_1-x_0\approx1$. Therefore, $B_1$ will describe circular, or almost circular, orbits where its initial velocity is close to the unity. As $\dot{y}_{1,0}$ gets larger than $1$, the orbit of body $1$ gets more eccentric. As we will see, this will occur when $B_2$ approaches $B_1$ and the interaction between them is not negligible any more.

Being $T_1$ and $T_2$ the periods of the two smaller bodies around body $0$, since $B_1$ is placed closer to $B_0$ than $B_2$, it is expected to move faster than $B_2$, and therefore $T_1\leq T_2$, meaning that body $1$ may complete more than one revolution before the alignment happens, where body $2$ may not complete any. 

\begin{rmk}[Classical period of syzygy and resonances]\label{rmk:resonances} 
The time for a classical syzygy (an alignment) can be predicted using the following expression:
\begin{equation}
    T_s = \cfrac{1}{\cfrac{1}{T_1} - \cfrac{1}{T_2}}.
    \label{eq:Ts}
\end{equation}
Resonances among bodies can be recognized in terms of the angle between alignments $\theta$ (Figure~\ref{fig:qbs_1}):
\begin{itemize}
    \item when $\theta=\pi$, $B_1$ and $B_2$ are on the negative part of $x-$axis in the second alignment, while $B_0$ is on the positive part. This corresponds to the period of the syzygy to be equal to $T_2/2$, and following \eqref{eq:Ts} to $T_2 = 3T_1$.    
    \item when  $\theta=2\pi$, $B_1$ and $B_2$ are again on the positive part of $x-$axis in the second alignment, while $B_0$ is on the negative part. This corresponds to the period of the syzygy to be equal to $T_2$, and following~\eqref{eq:Ts} to $T_2 = 2T_1.$
\end{itemize}

While bodies $1$ and $2$ describe circular orbits, their cut on the $x-$axis corresponds to their semi-major axis. Then, for Kepler's third law, $T^2_i=4\pi^2a^3_i/\mu_i$, for $i\in \{1,2\}$, resonance $\theta=\pi$ is expected to take place when $B_2$ occupies an initial position $x_2=\sqrt[3]{3^2}\approx2.08$, while resonance $\theta=2\pi$ is expected at $x_2=\sqrt[3]{2^2}\approx1.59$.
\end{rmk}

Statements introduced in this preamble will be useful along the explanations of the following two sections. For both, the configuration and initial values are the ones collected in Figure~\ref{fig:ic-poincare}. In Section~\ref{subsec:AE0} we focus on the case in which the bodies are ordered by mass $m_0\gg m_1\geq m_2$, where in Section~\ref{subsec:AE1} the intermediate body is the smaller one,  $m_0\gg m_2\geq m_1$. In both situations we have considered bodies $1$ and $2$, the two planets, in prograde orbit, since in planetary systems it is natural that all planets revolve in the same direction. 

\begin{figure}[h]
\centering
\begin{minipage}{0.55\textwidth}
\centering
 \includegraphics[width=\linewidth]{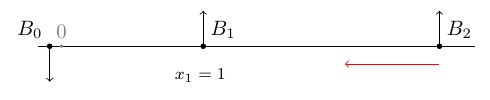}
\end{minipage}
\hfill
\begin{minipage}{0.35\textwidth}
\centering
\begin{tabular}{|c|c|c|c|}
\hline
$x_1$ & $x_2$ & $\dot{y}_1$ & $\dot{y}_2$\\
\hline \hline
$1$ & $8$ & $1$ & $0.7$ \\ \hline
\end{tabular}
\end{minipage}
\caption{Left, scheme of the initial positions and directions of motion for the three bodies in Poincar\'e regime; gray $0$ marks the origin of coordinates and the red arrow shows the sense of the continuation. Right, initial conditions taken for the continuation. Notice that $x_0$ and $\dot{y}_0$ are given by conservation of the center of mass.}
\label{fig:ic-poincare}
\end{figure}

Notice that, $B_2$ starts at a large distance from $B_1$, whose initial velocity corresponds to a circular orbit according to Equation~\eqref{eq:rel-x-vy} and, finally, we have taken velocity of $B_2$ equal to $0.7$.

\subsubsection{Planets ordered by mass}\label{subsec:AE0}
In this first section of academic cases of Poincar\'e type of solutions, we  analyse four cases in which the size of the masses is descending as they are further from the origin. This could correspond to systems like Sun-Jupiter-Saturn or Sun-Earth-Mars. To cover different situations and analyse the influence of the less massive body, we take normalized masses $m_1=10^{-3}$, $m_2\in \{10^{-6},10^{-5},10^{-4},10^{-3}\}$ and $m_0=1-m_1-m_2$, such that the total mass is $1$. Notice that these situations could also correspond to a planet and two satellites.

\begin{figure}[h!]
    \centering
    \footnotesize
    \setlength{\tabcolsep}{3pt}
    \renewcommand{\arraystretch}{1.5}

    \begin{adjustbox}{max totalsize={\textwidth}{0.98\textheight},center}
    \begin{tabular}{@{}
        >{\centering\arraybackslash}m{0.025\textwidth}
        *{3}{>{\centering\arraybackslash}m{\maincurvefigw}}
        >{\centering\arraybackslash}m{\rightmaincurvefigw}
        @{}
        >{\centering\arraybackslash}m{0.025\textwidth}}

        &
        {\footnotesize $m_2=10^{-6}$} &
        {\footnotesize $m_2=10^{-5}$} &
        {\footnotesize $m_2=10^{-4}$} &
        {\footnotesize $m_2=10^{-3}$}
        \\

        \rowlab{$\dot{y}_1$}
        
        &
        \maincurveplot{5}{AE0}{AE0_mapa_all}
        &
        \maincurveplot{12}{AE0}{AE0_mapa_all}
        &
        \maincurveplot{19}{AE0}{AE0_mapa_all}
        &
        \rightmaincurveplot{26}{AE0}{AE0_mapa_all}
        &
        \rowlab{\scriptsize $k$}
        
        \\

        \rowlab{$\dot{y}_1$}
        &
        \maincurveplot{1}{AE0}{AE0_mapa_all}
        &
        \maincurveplot{8}{AE0}{AE0_mapa_all}
        &
        \maincurveplot{15}{AE0}{AE0_mapa_all}
        &
        \rightmaincurveplot{22}{AE0}{AE0_mapa_all}
        & \rowlab{$\dot{y}_2$}
        
        \\
        
        \rowlab{$\dot{y}_1$}
        &
        \maincurveplot{4}{AE0}{AE0_mapa_all}
        &
        \maincurveplot{11}{AE0}{AE0_mapa_all}
        &
        \maincurveplot{18}{AE0}{AE0_mapa_all}
        &
        \rightmaincurveplot{25}{AE0}{AE0_mapa_all}
        &
        \rowlab{$\theta/\pi$}
        
        \\

        \rowlab{$e_x$}
        &
        \eccentrycurveplot{7}{AE0}{AE0_mapa_all}
        &
        \eccentrycurveplot{14}{AE0}{AE0_mapa_all}
        &
        \eccentrycurveplot{21}{AE0}{AE0_mapa_all}
        &
        \eccentrycurveplot{28}{AE0}{AE0_mapa_all}
        \\
        
        &
        {\footnotesize $x _2$} &
        {\footnotesize $x _2$} &
        {\footnotesize $x _2$} &
        {\footnotesize $x _2$} &
        \\
    \end{tabular}
    \end{adjustbox}

    \caption{Poincar\'e families of solutions computed for $m_0=1-m_1-m_2$, $m_1=10^{-3}$ and $m_2 \in \{10^{-6},10^{-5},10^{-4},10^{-3}\}$ for each of the four columns. First three rows show the curve continuation in $(x_2,\dot{y}_1)$, colored according to the stability index $k$, to the velocity of the second body $\dot{y}_2$, and to the angle $\theta/\pi$. Last row shows the initial $x$ component of the eccentricity vectors of $B_1$ and $B_2$, with respect to $B_0$, as the second approaches the first. Insets show a zoom of the bottom-left region.}
    
    \label{fig:AE0_Poincare}
\end{figure}

\begin{figure}[h!]
    \centering
    \footnotesize
    \setlength{\tabcolsep}{3pt}
    \renewcommand{\arraystretch}{1.5}

    \begin{adjustbox}{max totalsize={\textwidth}{0.98\textheight},center}
    \begin{tabular}{@{}
        >{\centering\arraybackslash}m{0.025\textwidth}
        *{2}{>{\centering\arraybackslash}m{\maincurvefigw}}
        >{\centering\arraybackslash}m{\rightmaincurvefigw}
        >{\centering\arraybackslash}m{\rightmaincurvefigw}
        @{}
        >{\centering\arraybackslash}m{0.025\textwidth}}

        &
        {\footnotesize $m_2=10^{-6}$} &
        {\footnotesize $m_2=10^{-5}$} &
        {\footnotesize $m_2=10^{-4}$} &
        {\footnotesize $m_2=10^{-3}$}
        \\

        \rowlab{$\dot{y}_1$}
        &
        \maincurveplot{2}{AE0}{AE0_mapa_all}
        &
        \maincurveplot{9}{AE0}{AE0_mapa_all}
        &
    \multicolumn{1}{
        >{\centering\arraybackslash}m{\rightmaincurvefigw}||
    }{
        \rightmaincurveplot{16}{AE0}{AE0_mapa_all}
    }
        &
        \rightmaincurveplot{23}{AE0}{AE0_mapa_all}
        & 
        \rowlabten{$\mathcal{H}\times10^{-4}$}
        
        \\
        
        \rowlab{$\dot{y}_1$}
        &
        \maincurveplot{3}{AE0}{AE0_mapa_all}
        &
        \maincurveplot{10}{AE0}{AE0_mapa_all}
        &
    \multicolumn{1}{
        >{\centering\arraybackslash}m{\rightmaincurvefigw}||
    }{
        \rightmaincurveplot{17}{AE0}{AE0_mapa_all}
    }
        &
        \rightmaincurveplot{24}{AE0}{AE0_mapa_all}
        &
        \rowlabten{$L\times10^{-3}$}
        \\
        
        &
        {\footnotesize $x _2$} &
        {\footnotesize $x _2$} &
        {\footnotesize $x _2$} &
        {\footnotesize $x _2$} &
        \\
    \end{tabular}
    \end{adjustbox}

    \caption{Poincar\'e families of solutions computed for $m_0=1-m_1-m_2$, $m_1=10^{-3}$ and $m_2 \in \{10^{-6},10^{-5},10^{-4},10^{-3}\}$ for each of the four columns. The rows show the curve continuation in $(x_2,\dot{y}_1)$, first colored according to the value of the total energy and second according to the value of the total angular momentum.}
    \label{fig:AE0_Poincare1}
\end{figure}

\begin{table}[h!]
\centering
{\footnotesize
\begin{tabular}{|c|c|c|c|c||c|c|c|c|c|c|}
\hline\multicolumn{5}{|c||}{\textbf{Solution of TBP}} & \multicolumn{6}{|c|}{\textbf{Approximation of keplerian elements}} \\ \hline
\multicolumn{11}{|c|}{\hspace{-3 cm}(a) \textbf{Resonance} $\theta=\pi$} \\ \hline
Case & $m_2$  &   $x_2$  & $\theta$     & $T$     & $e_{x_1}$      & $a_1$ & $T_1$  & $e_{x_2}$      & $a_2$  & $T_2$\\ \hline
1    & $10^{-6}$ & $2.0834$  & $\pi$    & $9.43655$ & $9.3\times10^{-7}$ & $1.0000$ &  $6.2832$ & $4.1\times10^{-3}$ & $2.0834$ & $18.9041$\\ \hline
2    & $10^{-5}$ & $2.0834$  & $\pi$    & $9.43704$ & $9.3\times10^{-6}$ & $1.0000$ &  $6.2832$ & $4.1\times10^{-3}$ & $2.0834$ & $18.9041$\\ \hline
3    & $10^{-4}$ & $2.0834$  & $\pi$    & $9.44197$ & $9.3\times10^{-5}$ & $1.0000$ &  $6.2835$ & $4.1\times10^{-3}$ & $2.0834$ & $18.9040$\\ \hline
4    & $10^{-3}$ & $2.0932$  & $\pi$    & $9.45777$ & $9.1\times10^{-4}$ & $1.0000$ &  $6.2863$ & $4.1\times10^{-3}$ & $2.0932$ & $19.0380$\\ \hline \hline

\multicolumn{11}{|c|}{\hspace{-3 cm}(b) \textbf{Turning point}} \\ \hline
Case & $m_2$  &   $x_2$  & $\theta$     & $T$     & $e_{x_1}$      & $a_1$ & $T_1$  & $e_{x_2}$      & $a_2$  & $T_2$\\ \hline
1    & $10^{-6}$ & $1.6235$  & $6.1116$ & $12.4109$ & $5.0\times10^{-5}$ & $1.0000$ &  $6.2832$  & $-8.9\times10^{-3}$& $1.6234$ & $13.0027$\\ \hline
2    & $10^{-5}$ & $1.6241$  & $6.1058$ & $12.4151$ & $4.9\times10^{-4}$ & $1.0000$ &  $6.2832$  & $-8.6\times10^{-3}$& $1.6240$ & $13.0101$ \\ \hline
3    & $10^{-4}$ & $1.6297$  & $6.0743$ & $12.4499$ & $4.1\times10^{-3}$ & $1.0000$ &  $6.2837$  & $-6.7\times10^{-3}$& $1.6297$ & $13.0782$ \\ \hline
4    & $10^{-3}$ & $1.6655$  & $5.8833$ & $12.5681$ & $2.0\times10^{-2}$ & $1.0004$ &  $6.2902$  & $-1.3\times10^{-3}$& $1.6655$ & $13.5117$ \\ \hline \hline

\multicolumn{11}{|c|}{\hspace{-3 cm}(c) \textbf{Minimum of} $e_{x_2}$} \\ \hline
Case & $m_2$  &   $x_2$  & $\theta$     & $T$     & $e_{x_1}$      & $a_1$ & $T_1$  & $e_{x_2}$      & $a_2$  & $T_2$\\ \hline
1    & $10^{-6}$ & $1.6971$  & $6.2816$ & $12.6680$ & $4.4\times10^{-3}$ & $1.0000$ &  $6.2834$  & $-6.0\times10^{-2}$& $1.6911$ & $13.8244$\\ \hline
2    & $10^{-5}$ & $1.6922$  & $6.2775$ & $12.8240$ & $1.3\times10^{-2}$ & $1.0002$ &  $6.2847$  & $-4.8\times10^{-2}$& $1.6884$ & $13.7909$\\ \hline
3    & $10^{-4}$ & $1.6845$  & $6.2578$ & $13.1427$ & $3.0\times10^{-2}$ & $1.0009$ &  $6.2918$  & $-2.4\times10^{-2}$& $1.6836$ & $13.3725$\\ \hline
4    & $10^{-3}$ & $1.6919$  & $6.1174$ & $13.3491$ & $4.7\times10^{-2}$ & $1.0022$ &  $6.3069$  & $-3.3\times10^{-3}$& $1.6919$ & $13.8347$\\ \hline \hline

\multicolumn{11}{|c|}{\hspace{-3 cm}(d) \textbf{Resonance} $\theta=2\pi$} \\ \hline
Case & $m_2$  &   $x_2$  & $\theta$     & $T$     & $e_{x_1}$      & $a_1$ & $T_1$  & $e_{x_2}$      & $a_2$  & $T_2$\\ \hline
1    & $10^{-6}$ & $3.7366$  & $2\pi$       & $82.1064$ & $7.1\times10^{-1}$ & $2.0375$ &  $18.2740$  & $3.3\times10^{-1}$ & $4.2031$ & $54.1692$ \\ \hline
2    & $10^{-5}$ & $3.7368$  & $2\pi$       & $82.1151$ & $7.1\times10^{-1}$ & $2.0376$ &  $18.2751$  & $3.3\times10^{-1}$ & $4.2034$ & $54.1740$ \\ \hline
3    & $10^{-4}$ & $3.7376$  & $2\pi$       & $82.1489$ & $7.1\times10^{-1}$ & $2.0376$ &  $18.2761$ & $3.3\times10^{-1}$ & $4.2042$ & $54.1902$  \\ \hline
4    & $10^{-3}$ & $3.7493$  & $2\pi$       & $82.6345$ & $7.1\times10^{-1}$ & $2.0397$ &  $18.3129$ & $3.3\times10^{-1}$ & $4.2172$ & $54.4412$  \\ \hline
\end{tabular}
}
    \caption{Some relevant data for the Poincar\'e type of RPOs computed for the four sets of masses shown in Figure~\ref{fig:AE0_Poincare} at resonances $\theta\in \{\pi,2\pi\}$, at the turning point and at the minimum of the initial $x-$component of eccentricity vector of body $2$. Solution data corresponds to the initial position of body $2$, $x_2$, angle between the two alignments, $\theta$ and period of the solution, $T$. Last six columns contain eccentricity, semi-major axis and period for body $1$ ($e_{x_1}$, $a_1$, $T_1$) and for body $2$ ($e_{x_2}$, $a_2$, $T_2)$, both by approximating their motion to a Keplerian orbit around body $0$.}
    \label{tab:AE0}
\end{table}

We start the computation of the solutions placing $B_2$ far away from $B_1$, whose initial position is always $x_1=1$, and then, we approach body $2$ to body $1$ simply reducing the value of $x_2$ coordinate. In Figures~\ref{fig:AE0_Poincare} and \ref{fig:AE0_Poincare1} we can see the computed curves of solutions in terms of the initial position of $B_2$, $x_2$, and initial $y$ component of velocity of $B_1$, $\dot{y}_1$, found for these 4 sets of masses. Notice that, for a given set of masses, each point in the continuation curve $(x_2,\dot{y}_1)$ is a different solution of the General Three-Body Problem.

Here, it is worth to mention that our fourth set of masses corresponds to one of family $I_1$ computed by \cite{hadjidemetriou1976A}. His and our curve of solutions present the same profile in spite of using different representing coordinates ($(x_2,x_1)$ instead of $(x_2,\dot{y}_1)$), reference frame (rotating instead of inertial) and methodology (analytic continuation from CRTBP instead of direct computation in TBP).

Figure~\ref{fig:AE0_Poincare} contains sixteen images, each of the four columns of images corresponds to a given set of masses, while the four rows include different information of the obtained solutions. In the first row, the continuation curve $(x_2,\dot{y}_1)$ is colored according to the number of real eigenvalues found for each solution, $k$. Then solutions in blue, $k=0$, correspond to stable ones, while those in green or orange are unstable with $k=2$ or $k=4$. In these plots two vertical lines are included to mark the expected $x_2$ position for solutions corresponding to the described resonances $\theta=\pi$ and $\theta=2\pi$, see Remark~\ref{rmk:resonances}. In the second and third rows, the same continuation curve is colored according to the value of the initial $y$ component of velocity of body $2$, $\dot{y}_2$, and according to 
the normalized 
angle between alignments, 
$\theta/\pi\in[0,2]$. 
And in the fourth row, variations of the $x$ component of the Keplerian eccentricity vectors of bodies $B_1$ and $B_2$ in their orbits with respect to $B_0$ are shown. 

Similarly, Figure~\ref{fig:AE0_Poincare1} shows the same continuation curves for the four cases, colored according to the total energy $\mathcal{H}$, Equation~\eqref{eq:tbp_ham}, and to the total angular momentum $L$,  Equation~\eqref{eq:tbp_am}. In order to support the explanation of the phenomena found, Table~\ref{tab:AE0} collects key information of the computed solutions at special locations of the continuation curves for the four sets of masses, marked with letters (a)-(d) in Figure~\ref{fig:AE0_Poincare}. Table~\ref{tab:AE0} also includes the approximation through two-body problem of the Keplerian orbital elements (eccentricity, semi-major axis and period) of $B_1$ and $B_2$ around $B_0$. Information provided in this table is developed progressively in the following paragraphs.

First of all, let us notice that for all solutions the total angular momentum is not null and that all the solutions own negative total energy, see Figure~\ref{fig:AE0_Poincare1}, meaning that the motion of the three bodies is bounded.

Now, we start the explanations looking at Figure~\ref{fig:AE0_Poincare}, for the four sets of masses. There, we observe the same phenomenon: at the beginning of the continuation, when bodies $1$ and $2$ are far from each other, the initial velocity for $B_1$ is close to the unity, what, as we have seen, is related to $e_1=0$ (circular orbits). This can be also observed by looking at the last row of graphs for both bodies. For this range of solutions, with circular orbits for $B_1$, we find that its Keplerian period ($T_1$) is close to $2\pi$ and that all the computed RPOs for the three bodies are stable for the four cases considered.

Then, as the $B_2$ approaches body $B_1$, i.e. $x_2$ is reduced, its velocity gets larger when $\dot{y}_1\approx1$. When the \textbf{resonance} $\theta=\pi$ (special solution (a)) is crossed, we find some unstable solutions for the cases in which the mass of the body $2$ is comparable with that of body $1$, $m_2=10^{-4}$ and $m_2=10^{-3}$. Regardless the resonance and the unstable solutions, the eccentricities of both bodies are still close to zero, meaning that the bodies still describe circular or almost circular orbits. Right after the resonance is crossed, we find stable solutions again.

Looking at the data collected in Table~\ref{tab:AE0} for this special situation, $\theta=\pi$, we check that the relations  $T_s=T_2/2$ and $T_2=3T_1$ explained in Remark~\ref{rmk:resonances} hold for the computed solutions, which are found for the expected value $x_2\approx2.08$. In addition, data in the table confirm that the eccentricities of bodies $1$ and $2$ at resonance $\theta=\pi$ are very small.

After the resonance $\theta=\pi$ is crossed, as $x_2$ gets smaller, the four curves of solutions present a \textbf{turning point} (special solution (b)), that is sharper as $m_2$ is smaller, as one can check in Figure~\ref{fig:AE0_Poincare} and looking at the $x_2$ value of solutions corresponding to the turning point collected in Table~\ref{tab:AE0}. This turning point occurs when the solutions are close to the expected resonance among circular orbits when $\theta=2\pi$, corresponding to $T_s=T_2$, $T_2=2T_1$ and $x_2\approx1.59$, Remark~\ref{rmk:resonances}. Looking at these values in the table, we find a good agreement between the expected values and the computed ones. However, this does not happen exactly for $\theta=2\pi$. In fact, we observe how the variation of $\theta$ starts fluid in the continuation curve, see third row of Figure~\ref{fig:AE0_Poincare}, until the turning point is reached. 

Besides, at the turning point we find a minimum, really close to zero, of the total angular momentum, see second row of Figure~\ref{fig:AE0_Poincare1}. Notice that if the total angular momentum of a Three-Body Problem becomes zero, a binary or even triple collision becomes possible.

Going through the turning point produces a change in the stability of the solutions. In the first three cases, right after the turning point we find unstable solutions with 2 real eigenvalues, until $B_1$ and $B_2$ get apart again and we recover stable solutions. For the last case, in which $m_1=m_2$, the turning point is less sharp and the range of unstable solutions is quite narrow.

Right after the turning point, eccentricity vector of the body $2$ gets negative, meaning that $B_2$'s orbit is not circular any more and that its periapsis takes place at the negative $x-$axis. This effect is again stronger as $m_2$ is smaller, as one can check at the third type of special solutions, (c) \textbf{minimum value of} $e_{x_2}$, collected in Table~\ref{tab:AE0}. Consequently, we conclude that the bodies $1$ and $2$ can not come closer describing still almost circular orbits with respect to the body $0$.

Notice that the profile of the curves $(x_2,\dot{y}_1)$ and $(x_2,e_{x_1})$ in Figure~\ref{fig:AE0_Poincare}, look exactly equal, due to the relation derived in previous section, Equation~\eqref{eq:ecvec}, and outlining that after the turning point, both the velocity and the eccentricity of $B_1$ increase. Differently as it happens with $\dot{y}_1$, the velocity of $B_2$ starts increasing before the turning point, gets its maximum there, and then it decreases again as $B_1$ and $B_2$ get apart.

Then, following the continuation after the turning point, we see that the velocity of the body $1$, as well as the eccentricity of both $B_1$ and $B_2$, increase significantly. This effect combined with the slow variation of $\theta$, makes that the solution of Three-Body Problem corresponding at \textbf{resonance} $\theta=2\pi$ (special solution (d)) occurs for very eccentric orbits. As a result, our families of RPOs of the Three-Body Problem transit from orbits of first species to orbits of second species described in Section~\ref{sec:pmot}.

This transition among first and second species of Poincar\'e solutions along this planetary family was already observed by \cite{hadjidemetriou1976A}. They also found the instability related with the resonance $T_2=3T_1$ and the relation between the turning point and the resonance $T_2=2T_1$. However, after the turning point they claimed not to be able to compute the solution corresponding to the resonance $\theta=2\pi$, that $B_2$ remains on circular orbit and that there are no other unstable solutions. We are able to compute the solution corresponding to such resonance for very eccentric orbits of $B_1$ and $B_2$ and we find a range of unstable solutions after the turning point. It must be taken into account that their solutions come from continuation of circular orbits in the CRTBP, for this they also assume to have difficulties to properly continue solutions with high eccentricity. A difficulty that our methodology does not find.

Hadjidemetriou also explains that since there is an infinity of resonances, in the continuation of one family of periodic orbits of the CRTBP, one might expect an infinity of separate families of planetary-type orbits. This is very important since it is another evidence of the dynamical diversity of the TBP. Along with the families of RPOs we are computing, other families can bifurcate giving rise to different configurations with their specific dynamical features.

Recall that those solutions in families of relative periodic orbits, that occur for $\theta/(2\pi)\in \Q$, like the ones at resonances $\theta=\pi$ and $\theta=2\pi$, are in fact absolute periodic orbits. Besides, notice that the computed solutions at resonance $\theta=2\pi$ are not the classical Euler solutions of collinear motion in the TBP. In Euler solutions the three bodies are forced to move with the same and constant angular velocity, what makes them belong to the same line for all time. On the contrary, in our solutions, each of the three bodies moves with different and non-constant angular velocity, making $B_1$ complete two rounds in the time $B_2$ completes one round for periodic solution with $\theta=2\pi$.

The slow variation of $\theta$, results in a slow variation of the period of the computed RPOs, $T$. Looking at Figure~\ref{fig:Ts}, we check that the prediction of time for a syzygy given by classical Equation~\eqref{eq:Ts} agrees with the period of the computed RPO before the turning point. After it, when orbits are not quasi-bicircular, Equation~\eqref{eq:Ts} is not suitable any more.

\begin{figure}[ht]
    \centering
    
    \centering
    \footnotesize
    \setlength{\tabcolsep}{5pt}
    \renewcommand{\arraystretch}{1.5}

    \begin{adjustbox}{max totalsize={\textwidth}{1\textheight},center}
    \begin{tabular}{@{}
        >{\centering\arraybackslash}m{0.01\textwidth}
        *{4}{>{\centering\arraybackslash}m{\maincurvefigw}}
        @{}}

        &
        {\footnotesize $m_2=10^{-6}$} &
        {\footnotesize $m_2=10^{-5}$} &
        {\footnotesize $m_2=10^{-4}$} &
        {\footnotesize $m_2=10^{-3}$}
        \\

        \rowlab{$T$} &        
        \maincurveplot{1}{AE0}{pinto-Ts}
        &
        \maincurveplot{2}{AE0}{pinto-Ts}
        &
        \maincurveplot{3}{AE0}{pinto-Ts}
        &
        \maincurveplot{4}{AE0}{pinto-Ts}
        
        \\
        &
        {\footnotesize $x _2$} &
        {\footnotesize $x _2$} &
        {\footnotesize $x _2$} &
        {\footnotesize $x _2$} 
    \end{tabular}
    \end{adjustbox}
    
    \caption{In green, variation of the period $T$ of the computed relative periodic solutions as $B_2$ approaches $B_1$ for the four cases in Figure~\ref{fig:AE0_Poincare}. In purple the expected period for a syzygy according to Equation~\eqref{eq:Ts}.}
    \label{fig:Ts}
\end{figure}

The continuations vanish when bodies $1$ and $2$ are far from each other, describing significantly eccentric orbits, and the total energy of the problem is close to zero, see Figure~\ref{fig:AE0_Poincare1}. Notice that if a null energy value is reached the motion of the three bodies is not bounded, then it is natural not to be able to find RPOs to the TBP for those values of total energy.

\paragraph{Some trajectories}

\begin{figure}[h!]
    \centering
    \setlength{\tabcolsep}{2pt}
    \renewcommand{\arraystretch}{0.75}
    \setlength{\arrayrulewidth}{0.4pt}
    \setlength{\doublerulesep}{1.2pt}

    \begin{adjustbox}{max totalsize={\textwidth}{0.9\textheight},center}
    \begin{tabular}{@{}
        >{\centering\arraybackslash}m{0.03\textwidth}||
        >{\centering\arraybackslash}m{0.225\textwidth}||
        *{4}{>{\centering\arraybackslash}m{\mainfigw}}
        >{\centering\arraybackslash}m{0.01\textwidth}
        @{}}

        $m _2$ & &
        (a) & (b) & (c) & (d)
        \\ \hline

         \multirow{2}{*}[-3em]{\rowlab{$10^{-6}$}}
        &
        \multirow{2}{*}[0em]{\maincurveplot{6}{AE0}{AE0_mapa_all}}
        &
        \mainplot{6}{AE0}{coppeq0}
        &
        \mainplot{3}{AE0}{coppeq0}
        &
        \mainplot{1}{AE0}{coppeq0}
        &
        \mainplot{7}{AE0}{coppeq0}
        \\

        &
        &
        \zoomplot{6}{AE0}{coppeq0}
        &
        \zoomplot{3}{AE0}{coppeq0}
        &
        \zoomplot{1}{AE0}{coppeq0}
        &
        \zoomplot{7}{AE0}{coppeq0}
        & \rotatebox[origin=r]{-90}{\footnotesize zoom-in}
        \\

        \hline

        \multirow{2}{*}[-3em]{\rowlab{$10^{-5}$}}
        &
        \multirow{2}{*}[0em]{\maincurveplot{13}{AE0}{AE0_mapa_all}}
        &
        \mainplot{6}{AE0}{coppeq1}
        &
        \mainplot{3}{AE0}{coppeq1}
        &
        \mainplot{2}{AE0}{coppeq1}
        &
        \mainplot{8}{AE0}{coppeq1}
        \\
        &
        &
        \zoomplot{6}{AE0}{coppeq1}
        &
        \zoomplot{3}{AE0}{coppeq1}
        &
        \zoomplot{2}{AE0}{coppeq1}
        &
        \zoomplot{8}{AE0}{coppeq1}
        & \rotatebox[origin=r]{-90}{\footnotesize zoom-in}
        \\

        \hline

         \multirow{2}{*}[-3em]{\rowlab{$10^{-4}$}}
        &
        \multirow{2}{*}[0em]{\maincurveplot{20}{AE0}{AE0_mapa_all}}
        &
        \mainplot{6}{AE0}{coppeq2}
        &
        \mainplot{4}{AE0}{coppeq2}
        &
        \mainplot{1}{AE0}{coppeq2}
        &
        \mainplot{8}{AE0}{coppeq2}
        \\

        &
        &
        \zoomplot{6}{AE0}{coppeq2}
        &
        \zoomplot{4}{AE0}{coppeq2}
        &
        \zoomplot{1}{AE0}{coppeq2}
        &
        \zoomplot{8}{AE0}{coppeq2}
        & \rotatebox[origin=r]{-90}{\footnotesize zoom-in}
        \\

        \hline

         \multirow{2}{*}[-3em]{\rowlab{$10^{-3}$}}
        &
        \multirow{2}{*}[0em]{\maincurveplot{27}{AE0}{AE0_mapa_all}}
        &
        \mainplot{6}{AE0}{coppeq3}
        &
        \mainplot{4}{AE0}{coppeq3}
        &
        \mainplot{2}{AE0}{coppeq3}
        &
        \mainplot{8}{AE0}{coppeq3}
        \\
        
        &
        &
        \zoomplot{6}{AE0}{coppeq3}
        &
        \zoomplot{4}{AE0}{coppeq3}
        &
        \zoomplot{2}{AE0}{coppeq3}
        &
        \zoomplot{8}{AE0}{coppeq3} 
        & \rotatebox[origin=r]{-90}{\footnotesize zoom-in}
        \\

        & 
    {\footnotesize
      \gpline[gpzero]{0 }%
      \gpline[gptwo]{2 }%
      \gpline[gpfour]{4}%
    }%
    & \multicolumn{4}{r}{\footnotesize\gplinemarked[gpsun]{$B_0$ }\gplinemarked[gpjup]{$B_1$ }\gplinemarked[gpsat]{$B_2$ }}
    \end{tabular}
    \end{adjustbox}

    \caption{Poincar\'e solutions for the four cases shown in Figure~\ref{fig:AE0_Poincare}, with $m_2$ in the first column. Columns (a)--(d) correspond to the selected solutions marked in the continuation curves of the second row. Bigger images show the trajectories in the $(x,y)$ plane of the three bodies; in blue body $2$, in orange body $1$, and in green body $0$. Small images are zoom-in body $0$.}
    \label{fig:AE0_tras}
\end{figure}

In order to give a more detailed explanation of the phenomena found, Figure~\ref{fig:AE0_tras}, includes the trajectories of the three bodies for the four sets of masses considered and for the four special solutions marked with letters in the third row of Figure~\ref{fig:AE0_Poincare} and collected in Table~\ref{tab:AE0}. Notice that transition among these particular solutions is soft along the continuation curve.

For these RPOs, we propagate in time from $t=0$ to $t=T$ the initial conditions of the three bodies according to Equation~\eqref{eq:tbp}. Corresponding $(x,y)$ representation are included in Figure~\ref{fig:AE0_tras}, where the orbits of the bodies $1$ and $2$ are plotted in orange and blue, respectively, in a larger image that is accompanied by other smaller one showing the orbit of $B_0$ in green. For clarity, Figure~\ref{fig:AE0_tras} is designed as a table, where each row corresponds to each of the four cases. First column collects $m_2$, second one the continuation curve colored according to $\theta/\pi$ and the third one contains the trajectories at the four situations (a)-(d). 

Then, plots at situation (a) contain a solution at resonance $\theta=\pi$, where one can observe how $B_2$ completes half of its orbit, while $B_1$ completes one and a half revolutions. Notice that orbit of body $B_0$ seems to perform one and a half revolutions when $m_2$ is very small (case $m_2=10^{-6}$, Figure~\ref{fig:AE0_tras}), however if we look at it carefully -and look at the following examples with larger values of $m_2$- it is clear that $B_0$ does not close its orbit for this solution. The fact that the most massive body cannot complete one revolution until the lightest body does, reveals the sensitivity of the interactions among the three bodies, despite some effects can be understood through the point of view of Keplerian (2-body) problem.

Plots (b) and (c) of Figure~\ref{fig:AE0_tras} show the closest computed solution to the turning point, and the solution for the minimum of $e_{x_2}$. Notice that in neither of these two solutions bodies $B_0$ and $B_2$ complete a revolution, in spite of the fact that the value of $\theta$ is very close to $2\pi$, see Table~\ref{tab:AE0}. In fact, after the turning point, it takes many points in the continuation curve to actually reach the resonance corresponding to $\theta=2\pi$, as already explained. This resonance is achieved for a very eccentric orbit of $B_1$, see plots (d) of Figure~\ref{fig:AE0_tras}. It is remarkable that orbits of $B_1$ and $B_2$ at resonance $\theta=2\pi$ look similar regardless the different values of the masses considered. Body $0$ seems to be the only one that behaves differently. 

Also, analysing the Keplerian elements in Table~\ref{tab:AE0} for bodies $1$ and $2$, 
we see that their values are similar for the four sets of masses at resonance $\theta=2\pi$. However, we must notice that the approximation of the semi-major axis after the turning point does not present a good agreement with the trajectories propagated in the three-body problem, meaning that approximations given by two-body problem, after interaction among bodies $1$ and $2$ is relevant, is not suitable. 



\subsubsection{Planets not ordered by mass}\label{subsec:AE1}

In this second section of academic cases of Poincar\'e type solutions, we consider the case in which the farther body is the second most massive body, like in the Sun-Mercury-Venus system. 
In order to cover different situations we take $m_2=10^{-3}$, $m_1\in \{10^{-6},10^{-5},10^{-4},10^{-3}\}$ and $m_0=1-m_1-m_2$, in such a way that last cases in previous and present sections correspond to the same set of masses. Works by Hadjidemetriou and Delibaltas \cite{hadjidemetriou1976A,delibaltas1976} argued that this situation does not 
need to be studied separately, since they consider it is the same as the previous one just interchanging the roles of $B_1$ and $B_2$. It is noteworthy that they consider the two planets with the same mass, like in our last example, if $m_1\neq m_2$ the statement is not true.

\begin{figure}[h!]
    \centering
    \footnotesize
    \setlength{\tabcolsep}{3pt}
    \renewcommand{\arraystretch}{1.5}

    \begin{adjustbox}{max totalsize={\textwidth}{0.98\textheight},center}
    \begin{tabular}{@{}
        >{\centering\arraybackslash}m{0.025\textwidth}
        *{3}{>{\centering\arraybackslash}m{\maincurvefigw}}
        >{\centering\arraybackslash}m{\rightmaincurvefigw}
        @{}
        >{\centering\arraybackslash}m{0.025\textwidth}}

        &
        {\footnotesize $m_1=10^{-6}$} &
        {\footnotesize $m_1=10^{-5}$} &
        {\footnotesize $m_1=10^{-4}$} &
        {\footnotesize $m_1=10^{-3}$}
        \\

        \rowlab{$\dot{y}_1$}
        
        &
        \maincurveplot{5}{AE1}{AE1_mapa_all}
        &
        \maincurveplot{12}{AE1}{AE1_mapa_all}
        &
        \maincurveplot{19}{AE1}{AE1_mapa_all}
        &
        \rightmaincurveplot{26}{AE1}{AE1_mapa_all}
        &
        \rowlab{\scriptsize $k$}
        
        \\

        \rowlab{$\dot{y}_1$}
        &
        \maincurveplot{1}{AE1}{AE1_mapa_all}
        &
        \maincurveplot{8}{AE1}{AE1_mapa_all}
        &
        \maincurveplot{15}{AE1}{AE1_mapa_all}
        &
        \rightmaincurveplot{22}{AE1}{AE1_mapa_all}
        & \rowlab{$\dot{y}_2$}
        
        \\
        
        \rowlab{$\dot{y}_1$}
        &
        \maincurveplot{4}{AE1}{AE1_mapa_all}
        &
        \maincurveplot{11}{AE1}{AE1_mapa_all}
        &
        \maincurveplot{18}{AE1}{AE1_mapa_all}
        &
        \rightmaincurveplot{25}{AE1}{AE1_mapa_all}
        &
        \rowlab{$\theta/\pi$}
        
        \\

        \rowlab{$e_x$}
        &
        \eccentrycurveplot{7}{AE1}{AE1_mapa_all}
        &
        \eccentrycurveplot{14}{AE1}{AE1_mapa_all}
        &
        \eccentrycurveplot{21}{AE1}{AE1_mapa_all}
        &
        \eccentrycurveplot{28}{AE1}{AE1_mapa_all}
        \\
        
        &
        {\footnotesize $x _2$} &
        {\footnotesize $x _2$} &
        {\footnotesize $x _2$} &
        {\footnotesize $x _2$} &
        \\
    \end{tabular}
    \end{adjustbox}

    \caption{Poincar\'e families of solutions computed for  $m_0=1-m_1-m_2$, $m_2=10^{-3}$ and $m_1\in\{10^{-6},10^{-5},10^{-4},10^{-3}\}$ for each of the four columns. First three rows show the curve continuation in $(x_2,\dot{y}_1)$, colored according to the stability index $k$, to the velocity of the second body $\dot{y}_2$, and to the angle $\theta/\pi$. Last row shows the initial $x$ component of the eccentricity vectors of $B_1$ and $B_2$, with respect to $B_0$, as the second approaches the first. Insets show a zoom of the bottom-left region.}
    \label{fig:AE1_Poincare}
\end{figure}

\begin{figure}[h!]
    \centering
    \footnotesize
    \setlength{\tabcolsep}{3pt}
    \renewcommand{\arraystretch}{1.5}

    \begin{adjustbox}{max totalsize={\textwidth}{0.98\textheight},center}
    \begin{tabular}{@{}
        >{\centering\arraybackslash}m{0.025\textwidth}
        *{2}{>{\centering\arraybackslash}m{\maincurvefigw}}
        >{\centering\arraybackslash}m{\rightmaincurvefigw}
        >{\centering\arraybackslash}m{\rightmaincurvefigw}
        @{}
        >{\centering\arraybackslash}m{0.025\textwidth}}

        &
        {\footnotesize $m_2=10^{-6}$} &
        {\footnotesize $m_2=10^{-5}$} &
        {\footnotesize $m_2=10^{-4}$} &
        {\footnotesize $m_2=10^{-3}$}
        \\

        \rowlab{$\dot{y}_1$}
        &
        \maincurveplot{2}{AE1}{AE1_mapa_all}
        &
        \maincurveplot{9}{AE1}{AE1_mapa_all}
        &
    \multicolumn{1}{
        >{\centering\arraybackslash}m{\rightmaincurvefigw}||
    }{
        \rightmaincurveplot{16}{AE1}{AE1_mapa_all}
    }
        &
        \rightmaincurveplot{23}{AE1}{AE1_mapa_all}
        & 
        \rowlabten{$\mathcal{H}\times10^{-4}$}
        
        \\
        
        \rowlab{$\dot{y}_1$}
        &
        \maincurveplot{3}{AE1}{AE1_mapa_all}
        &
        \maincurveplot{10}{AE1}{AE1_mapa_all}
        &
    \multicolumn{1}{
        >{\centering\arraybackslash}m{\rightmaincurvefigw}||
    }{
        \rightmaincurveplot{17}{AE1}{AE1_mapa_all}
    }
        &
        \rightmaincurveplot{24}{AE1}{AE1_mapa_all}
        &
        \rowlabten{$L\times10^{-3}$}
        \\
        
        &
        {\footnotesize $x _2$} &
        {\footnotesize $x _2$} &
        {\footnotesize $x _2$} &
        {\footnotesize $x _2$} &
        \\
    \end{tabular}
    \end{adjustbox}

 \caption{Poincar\'e families of solutions computed for $m_0=1-m_1-m_2$, $m_2=10^{-3}$ and $m_1\in\{10^{-6},10^{-5},10^{-4},10^{-3}\}$ for each of the four columns. The rows show the curve continuation in $(x_2,\dot{y}_1)$, first colored according to according to the value of the total energy and second according to the value of the total angular momentum.}
    \label{fig:AE1_Poincare1}
\end{figure}

\begin{table}[h!]
\centering
{\footnotesize
\begin{tabular}{|c|c|c|c|c||c|c|c|c|c|c|}
\hline\multicolumn{5}{|c||}{\textbf{Solution of TBP}} & \multicolumn{6}{|c|}{\textbf{Approximation of keplerian elements}} \\ \hline
\multicolumn{11}{|c|}{\hspace{-3 cm}(a) \textbf{Resonance} $\theta=\pi$} \\ \hline
Case & $m_1$  &   $x_2$  & $\theta$     & $T$     & $e_{x_1}$      & $a_1$ & $T_1$  & $e_{x_2}$      & $a_2$  & $T_2$\\ \hline
0 & $10^{-6}$ & 2.0932 & $\pi$ & 9.43482 & $9.0\times10^{-4}$ & 1.0000 & 6.2863 & $4.1\times10^{-6}$ & 2.0932 & 19.0282 \\ \hline
1 & $10^{-5}$ & 2.0932 & $\pi$ & 9.43502 & $9.0\times10^{-4}$ & 1.0000 & 6.2863 & $4.1\times10^{-5}$ & 2.0932 & 19.0283 \\ \hline
2 & $10^{-4}$ & 2.0932 & $\pi$ & 9.43709 & $9.1\times10^{-4}$ & 1.0000 & 6.2863 & $4.1\times10^{-4}$ & 2.0932 & 19.0292 \\ \hline
\hline

\multicolumn{11}{|c|}{\hspace{-3 cm}(b) \textbf{Turning point 1}} \\ \hline
Case & $m_1$  &   $x_2$  & $\theta$     & $T$     & $e_{x_1}$      & $a_1$ & $T_1$  & $e_{x_2}$      & $a_2$  & $T_2$\\ \hline
0 & $10^{-6}$ & 1.6566 & 5.9063 & 12.6139 & $2.22\times10^{-2}$ & 1.0005 & 6.2910 & $-2.0\times10^{-6}$ & 1.6566 & 13.3975 \\ \hline
1 & $10^{-5}$ & 1.6567 & 5.9058 & 12.6117 & $2.22\times10^{-2}$ & 1.0005 & 6.2910 & $-2.0\times10^{-5}$ & 1.6567 & 13.3986 \\ \hline
2 & $10^{-4}$ & 1.6576 & 5.9027 & 12.6040 & $2.19\times10^{-2}$ & 1.0005 & 6.2909 & $-1.9\times10^{-4}$ & 1.6576 & 13.4096 \\ \hline
\hline

\multicolumn{11}{|c|}{\hspace{-3 cm}(c) \textbf{Turning point 2}} \\ \hline
Case & $m_1$  &   $x_2$  & $\theta$     & $T$     & $e_{x_1}$      & $a_1$ & $T_1$  & $e_{x_2}$      & $a_2$  & $T_2$\\ \hline
0 & $10^{-6}$ & 5.3705 & $6.2837$ & 80.6666 & $7.09\times10^{-1}$ & 2.0093 & 17.9043 & $1.95\times10^{-2}$ & 5.3726 & 78.2443 \\ \hline
1 & $10^{-5}$ & 4.4899 & $6.2828$ & 66.2322 & $6.68\times10^{-1}$ & 1.8066 & 15.2642 & $6.52\times10^{-2}$ & 4.5091 & 60.1621 \\ \hline
2 & $10^{-4}$ & 3.5097 & $6.2821$ & 54.6052 & $6.23\times10^{-1}$ & 1.6349 & 13.1409 & $1.70\times10^{-1}$ & 3.6136 & 43.1638 \\ \hline
\hline

\multicolumn{11}{|c|}{\hspace{-3 cm}(d) \textbf{Resonance} $\theta=2\pi$} \\ \hline
Case & $m_1$  &   $x_2$  & $\theta$     & $T$     & $e_{x_1}$      & $a_1$ & $T_1$  & $e_{x_2}$      & $a_2$  & $T_2$\\ \hline
0 & $10^{-6}$ & 3.7440 & $2\pi$ & 82.4959 & $7.14\times10^{-1}$ & 2.0395 & 18.3103 & $3.27\times10^{-1}$ & 4.1911 & 53.9098 \\ \hline
1 & $10^{-5}$ & 3.7440 & $2\pi$ & 82.4967 & $7.14\times10^{-1}$ & 2.0395 & 18.3103 & $3.27\times10^{-1}$ & 4.1913 & 53.9138 \\ \hline
2 & $10^{-4}$ & 3.7442 & $2\pi$ & 82.5317 & $7.14\times10^{-1}$ & 2.0399 & 18.3147 & $3.27\times10^{-1}$ & 4.1939 & 53.9661 \\ \hline
\end{tabular}
}
\caption{Some relevant data for the Poincar\'e type of RPOs computed for the first three sets of masses shown in Figure~\ref{fig:AE1_Poincare} at resonances $\theta\in \{\pi,2\pi\}$ and at the two turning points found. Solution data corresponds to the initial position of body $2$, $x_2$, angle between the two alignments, $\theta$ and period of the solution, $T$. Last six columns contain eccentricity, semi-major axis and period for body $1$ ($e_{x_1}$, $a_1$, $T_1$) and for body $2$ ($e_{x_2}$, $a_2$, $T_2)$, both by approximating their motion to a Keplerian orbit around body $0$.}
    \label{tab:AE1}
\end{table}

Again, we start the computation of the solutions placing $B_2$ far away from $B_1$, whose initial position is always $x_1=1$, and then, $B_2$ increasingly approaches to $B_1$, as described in Figure~\ref{fig:ic-poincare}. Figures~\ref{fig:AE1_Poincare} and \ref{fig:AE1_Poincare1} are equivalent to Figures~\ref{fig:AE0_Poincare} and \ref{fig:AE0_Poincare1} explained in the first section, in the sense that they provide the information about the same magnitudes. Recall that for a given set of masses, each point in the continuation curve is a different solution of the Three-Body Problem. Also, similarly as in previous section, Table~\ref{tab:AE1} collects key information of the computed solutions at special locations of the continuation curves, and the approximation given by two-body problem for Keplerian elements of bodies $1$ and $2$ with respect to $B_0$. Last case, for $m_1=m_2$, is not included in the Table since it also corresponds to the last case of previous section.

At the beginning of the continuation curves in $(x_2,\dot{y}_1)$, when $B_1$ and $B_2$ are far from each other, we observe the same kind of solution as in the previous examples: stable, circular solutions for the three bodies and for the four sets of masses considered, see first row of Figure~\ref{fig:AE1_Poincare}. 

As $x_2$ gets smaller, the \textbf{resonance} $\theta=\pi$ (special solution (a)), related to $T_s=T_2/2$, is crossed and we find unstable solutions with 2 real eigenvalues for all the set of masses considered. Notice that when the bodies are ordered according to their masses (as in Section~\ref{subsec:AE0}), unstable solutions related to this resonance only appeared for the cases in which the smallest mass was large enough. Now, in spite of using the same sets of masses, but being the exterior body the second most massive one, $m_2>m_1$, crossing the resonance $\theta=\pi$ always results in unstable solutions. These solutions are circular, as one can check by looking that the last row of images in Figure~\ref{fig:AE1_Poincare} and at the values of $e_x$ for this special case collected in Table~\ref{tab:AE1}.

After this resonance, we found stable, circular or almost circular, solutions as $B_2$ keeps approaching $B_1$ and their velocities start to increase. When $x_2$ is close to the value $1.59$ at which the resonance $\theta=2\pi$ would occur for circular orbits of bodies $1$ and $2$ around body $0$, see Remark~\ref{rmk:resonances}, we find again a \textbf{turning point} (special solution (b)). Similarly as in Section~\ref{subsec:AE0}, at this turning point the velocity of $B_2$ gets its maximum (see row two of Figure~\ref{fig:AE1_Poincare}) and, the total angular momentum its minimum (see row two of Figure~\ref{fig:AE1_Poincare1}), meaning that $B_1$ and $B_2$ are close to a collision.

Notice that in this case, approaching the turning point does not produce a change in the stability of the solutions, nor in the eccentricity of the orbit of $B_2$. Before and after the turning point, the solutions for $B_2$ correspond to nearly circular orbits. However, eccentricity of $B_1$ is significantly affected by this approach, see last row of Figure~\ref{fig:AE1_Poincare}. Again, we observe the similarity between the profiles of the curves $(x_2,\dot{y}_1)$ and $(x_2,e_{x_1})$ due to Equation~\eqref{eq:ecvec}.  After this first turning point, $\dot{y}_2$ starts decreasing, while the variation of $\theta$ gets slow as the continuation curve evolves. 

We find a \textbf{second turning point} (special solution (c)) when $B_2$ is getting far away from $B_1$, that is sharper as lower is the value of $m_1$, check values at Table~\ref{tab:AE1}. In this second turning point, $\dot{y}_2$ reaches a minimum and the total energy its highest values, being up to the order of $-10^{-5}$. Again, we recall that if the energy reaches a null value, the motion is not bounded any more. Approaching this second turning point affects the eccentricity of body $2$, that starts growing. In fact, the solution of the Three-Body Problem based on an alignment at the (special solution (d)) \textbf{resonant angle} $\theta=2\pi$ is given again for very eccentric orbits. Therefore, again, our families of Poincar\'e RPOs transit from orbits of first species to orbits of the second species.

Finally, the continuations vanish when bodies $1$ and $2$ describe very eccentric orbits being far away from each other and the solutions approach null values of the total energy of the system.

\paragraph{Some trajectories}

\begin{figure}[h!]
    \centering
    \setlength{\tabcolsep}{2pt}
    \renewcommand{\arraystretch}{0.75}
    \setlength{\arrayrulewidth}{0.4pt}
    \setlength{\doublerulesep}{1.2pt}

    \begin{adjustbox}{max totalsize={\textwidth}{0.9\textheight},center}
    \begin{tabular}{@{}
        >{\centering\arraybackslash}m{0.03\textwidth}||
        >{\centering\arraybackslash}m{0.225\textwidth}||
        *{4}{>{\centering\arraybackslash}m{\mainfigw}}
        >{\centering\arraybackslash}m{0.01\textwidth}
        @{}}

        $m _1$ & &
        (a) & (b) & (c) & (d)
        \\ \hline

         \multirow{2}{*}[-3em]{\rowlab{$10^{-6}$}}
        &
        \multirow{2}{*}[0em]{\maincurveplot{6}{AE1}{AE1_mapa_all}}
        &
        \mainplot{1}{AE1}{coppeq0}
        &
        \mainplot{4}{AE1}{coppeq0}
        &
        \mainplot{5}{AE1}{coppeq0}
        &
        \mainplot{7}{AE1}{coppeq0}
        \\

        &&
        \zoomplot{1}{AE1}{coppeq0}
        &
        \zoomplot{4}{AE1}{coppeq0}
        &
        \zoomplot{5}{AE1}{coppeq0}
        &
        \zoomplot{7}{AE1}{coppeq0}
        & \rotatebox[origin=r]{-90}{\footnotesize zoom-in}
        \\

        \hline

        \multirow{2}{*}[-3em]{\rowlab{$10^{-5}$}}
        &
        \multirow{2}{*}[0em]{\maincurveplot{13}{AE1}{AE1_mapa_all}}
        &
        \mainplot{1}{AE1}{coppeq1}
        &
        \mainplot{4}{AE1}{coppeq1}
        &
        \mainplot{5}{AE1}{coppeq1}
        &
        \mainplot{7}{AE1}{coppeq1}
        \\

        &&
        \zoomplot{1}{AE1}{coppeq1}
        &
        \zoomplot{4}{AE1}{coppeq1}
        &
        \zoomplot{5}{AE1}{coppeq1}
        &
        \zoomplot{7}{AE1}{coppeq1}
        & \rotatebox[origin=r]{-90}{\footnotesize zoom-in}
        \\

        \hline

         \multirow{2}{*}[-3em]{\rowlab{$10^{-4}$}}
        &
        \multirow{2}{*}[0em]{\maincurveplot{20}{AE1}{AE1_mapa_all}}
        &
        \mainplot{1}{AE1}{coppeq2}
        &
        \mainplot{4}{AE1}{coppeq2}
        &
        \mainplot{5}{AE1}{coppeq2}
        &
        \mainplot{6}{AE1}{coppeq2}
        \\

        &&
        \zoomplot{1}{AE1}{coppeq2}
        &
        \zoomplot{4}{AE1}{coppeq2}
        &
        \zoomplot{5}{AE1}{coppeq2}
        &
        \zoomplot{6}{AE1}{coppeq2}
        & \rotatebox[origin=r]{-90}{\footnotesize zoom-in}
        \\

        & 
    {\footnotesize
      \gpline[gpzero]{0 }%
      \gpline[gptwo]{2 }%
      \gpline[gpfour]{4}%
    }%
    & \multicolumn{4}{r}{\footnotesize\gplinemarked[gpsun]{$B_0$ }\gplinemarked[gpjup]{$B_1$ }\gplinemarked[gpsat]{$B_2$ }}
    \end{tabular}
    \end{adjustbox}

    \caption{Poincar\'e solutions for the four cases shown in Figure~\ref{fig:AE1_Poincare}, with $m_1$ in the first column. Columns (a)--(d) correspond to the selected solutions marked in the continuation curves of the second row. Bigger images show the trajectories in the $(x,y)$ plane of the three bodies; in blue body $2$, in orange body $1$, and in green body $0$. Small images are zoom-in body $0$.}
    \label{fig:AE1_tras}
\end{figure}

Figure~\ref{fig:AE1_tras} includes the trajectories of the three bodies for the four special solutions marked with letters in the third row of Figure~\ref{fig:AE1_Poincare} and in Table~\ref{tab:AE1}. 
The (a)--(d) columns show the time propagation of the trajectories of the three bodies during one period, $T$, for the cases of masses $m_1\in\{10^{-6},10^{-5},10^{-4}\}$; we skip last case ($m_1=m_2=10^{-3}$) since it was already included in previous section. 

Again, we observe that for solution (a) resonance $\theta=\pi$, $B_1$ completes one round and a half, while $B_0$ and $B_2$ complete only a half one. For solutions corresponding to the first and second turning points, (b) and (c), orbits of bodies are almost closed, especially in the second turning point. Then, the corresponding angle $\theta$ is close to $2\pi$, as one can check at Table~\ref{tab:AE1}. In the second turning point, the eccentricity of body $1$ is so high that crosses the orbit of body $2$. This close encounter, typically avoided in classical Poincar\'e solutions, may be the cause of the change in the sense of the continuation curve. 

As observed in previous section, after the first turning point, where the interaction among bodies $1$ and $2$ becomes more relevant, the approximation of some Keplerian elements for their orbits around $B_0$ (collected in Table~\ref{tab:AE1}) differs from the real orbits computed in the frame of the Three-Body Problem.

Finally, notice that the computed solutions of the TBP with different set of masses, at the solution (d) resonance $\theta=2\pi$ present the same configuration for bodies $1$ and $2$, while the orbit of body $0$ differs among cases. That is the same effect observed previously in Figure~\ref{fig:AE0_tras}. This tendency limit for body $1$ to a very eccentric orbit inside a nearly circular one for $B_2$ with $a_2\rightarrow \infty$ was theoretically described by Hadjidemetriou \cite{hadjidemetriou1976A}. Notice that $B_2$ does not describe a circular orbit around the center of mass, in fact its Keplerian eccentricity ($e_{x_2}\approx0.33$) agrees with its TBP orbit, however it \textit{seems} close to being circular due to its size with a center different from the center of mass nor $B_0$.

\subsection{Hill solutions}\label{sec:Hill}
Let's now study Hill type of solutions, also known as satellite or Lunar solutions. Configuration of these solutions is based on a hierarchical scheme, see Figure~\ref{fig:qbs_2}, where body $2$ orbits body $1$ that, at the same time, orbits body $0$, as in a star-planet-satellite system.

Then, we could think about this problem of three bodies as two decoupled two-body problems; one for the star-planet system ($B_0$ and $B_1$) and other for the planet-satellite system ($B_1$ and $B_2$). This way of considering the decouple is acceptable as a first approximation where masses of the bodies satisfies the hierarchy ($m_0\gg m_1 \gg m _2$) and bodies $1$ and $2$ are sufficiently close.

Different from the procedure followed in the previous section for Poincar\'e type of solutions, we now begin the continuation of the families of RPOs to the TBP by placing $B_1$ and $B_2$ very close to each other, as shown in Figure~\ref{fig:ic-hill} where the initial conditions are included.

\begin{figure}[h]
\centering
\begin{minipage}{0.55\textwidth}
\centering
 \includegraphics[width=\linewidth]{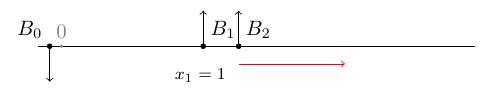}
\end{minipage}
\hfill
\begin{minipage}{0.35\textwidth}
\centering
\begin{tabular}{|c|c|c|c|}
\hline
$x_1$ & $x_2$ & $\dot{y}_1$ & $\dot{y}_2$\\
\hline \hline
$1$ & $1.001$ & $1.003$ & $0.7$ \\ \hline
\end{tabular}
\end{minipage}
\caption{Left, scheme of the initial positions and directions of motion for the three bodies in Hill regime; gray $0$ marks the origin of coordinates and the red arrow shows the sense of the continuation. Right, initial conditions taken for the continuation. Notice that $x_0$ and $\dot{y}_0$ are given by conservation of the center of mass.}
\label{fig:ic-hill}
\end{figure}

In this configuration, with bodies $1$ and $2$ sufficiently close, $B_2$ is gravitationally bounded to $B_1$, meaning that the energy for their Keplerian motion must be negative, see for example \cite{Pollard66}. Energy per unit of mass for their relative Keplerian motion is given by,
\begin{equation}
    h_{1,2} = \frac{\dot{q}_{1,2}^2}{2} - \frac{m_1+m_2}{q_{1,2}},
    \label{eq:h12}
\end{equation}
being $q_{1,2}$ and $\dot{q}_{1,2}$ their relative position and velocity, respectively. Notice that gravitational constant $\mathcal{G}$ is omitted in this expression since we take its value as the unity. 

Although $h_{1,2}$ is a constant of motion of the Two-body Problem, this is not true for the Three-Body Problem. However, as we will explain, it helps us to understand how $B_2$ (the satellite) that starts orbiting $B_1$ (the planet), eventually gets apart from it and orbits directly $B_0$ (the star), becoming a circumstellar body.

\subsubsection{From satellite to circumstellar solutions}
In this section of Hill solutions, starting with the typical configuration of star-planet-satellite, 
we take $m_1=10^{-4}$, $m_2\in \{10^{-7},10^{-6},10^{-5},10^{-4}\}$ and $m_0=1-m_1-m_2$.

\begin{figure}[h!]
    \centering
    \footnotesize
    \setlength{\tabcolsep}{3pt}
    \renewcommand{\arraystretch}{1.5}

    \begin{adjustbox}{max totalsize={\textwidth}{0.98\textheight},center}
    \begin{tabular}{@{}
        >{\centering\arraybackslash}m{0.025\textwidth}
        *{3}{>{\centering\arraybackslash}m{\maincurvefigw}}
        >{\centering\arraybackslash}m{\rightmaincurvefigw}
        @{}
        >{\centering\arraybackslash}m{0.025\textwidth}}

        &
        {\footnotesize $m_2=10^{-7}$} &
        {\footnotesize $m_2=10^{-6}$} &
        {\footnotesize $m_2=10^{-5}$} &
        {\footnotesize $m_2=10^{-4}$}
        \\

        \rowlab{$\dot{y}_1$}
        
        &
        \maincurveplot{5}{AE3}{AE3_mapa_all}
        &
        \maincurveplot{12}{AE3}{AE3_mapa_all}
        &
        \maincurveplot{19}{AE3}{AE3_mapa_all}
        &
        \rightmaincurveplot{26}{AE3}{AE3_mapa_all}
        &
        \rowlab{\scriptsize $k$}
        
        \\

        \rowlab{$\dot{y}_1$}
        &
        \maincurveplot{1}{AE3}{AE3_mapa_all}
        &
        \maincurveplot{8}{AE3}{AE3_mapa_all}
        &
        \maincurveplot{15}{AE3}{AE3_mapa_all}
        &
        \rightmaincurveplot{22}{AE3}{AE3_mapa_all}
        & \rowlab{$\dot{y}_2$}
        
        \\
        
        \rowlab{$\dot{y}_1$}
        &
        \maincurveplot{4}{AE3}{AE3_mapa_all}
        &
        \maincurveplot{11}{AE3}{AE3_mapa_all}
        &
        \maincurveplot{18}{AE3}{AE3_mapa_all}
        &
        \rightmaincurveplot{25}{AE3}{AE3_mapa_all}
        &
        \rowlab{$\theta/\pi$}
        
        \\
        
        \rowlab{$\dot{y}_1$}
        &
        \maincurveplot{29}{AE3}{AE3_mapa_all}
        &
        \maincurveplot{30}{AE3}{AE3_mapa_all}
        &
        \maincurveplot{31}{AE3}{AE3_mapa_all}
        &
        \rightmaincurveplot{32}{AE3}{AE3_mapa_all}
        & \rowlab{$\mathrm{sgn}(h _{1,2})$}
        \\

        \rowlab{$e_x$}
        &
        \eccentrycurveplot{7}{AE3}{AE3_mapa_all}
        &
        \eccentrycurveplot{14}{AE3}{AE3_mapa_all}
        &
        \eccentrycurveplot{21}{AE3}{AE3_mapa_all}
        &
        \eccentrycurveplot{28}{AE3}{AE3_mapa_all}
        \\
        
        &
        {\footnotesize $x _2$} &
        {\footnotesize $x _2$} &
        {\footnotesize $x _2$} &
        {\footnotesize $x _2$} &
        \\
    \end{tabular}
    \end{adjustbox}

    \caption{Hill families of solutions computed for $m_0=1-m_1-m_2$, $m_1=10^{-4}$ and $m_2\in\{10^{-7},10^{-6},10^{-5},10^{-4}\}$ for each of the four columns. The first four rows show the curve continuation in $(x_2,\dot{y}_1)$, colored according to the stability index $k$, to velocity of the second body $\dot{y}_2$, to the angle $\theta/\pi$ and to the sign of the energy of relative system for bodies $1$ and $2$, Equation~\eqref{eq:h12}. Last row shows the variation of the $x$ component of the eccentricity vectors of the bodies $1$ and $2$, with respect to body $0$, as the second gets apart from the first. Shadowed area in last row cover the solutions for which $h_{1,2}<0$. Insets show a zoom of the bottom-left region. }
    \label{fig:AE3_Hill}
\end{figure}

\begin{figure}[h!]
    \centering
    \footnotesize
    \setlength{\tabcolsep}{3pt}
    \renewcommand{\arraystretch}{1.5}

    \begin{adjustbox}{max totalsize={\textwidth}{0.98\textheight},center}
    \begin{tabular}{@{}
        >{\centering\arraybackslash}m{0.025\textwidth}
        *{2}{>{\centering\arraybackslash}m{\maincurvefigw}}
        >{\centering\arraybackslash}m{\rightmaincurvefigw}
        >{\centering\arraybackslash}m{\rightmaincurvefigw}
        @{}
        >{\centering\arraybackslash}m{0.025\textwidth}}

        &
        {\footnotesize $m_2=10^{-7}$} &
        {\footnotesize $m_2=10^{-6}$} &
        {\footnotesize $m_2=10^{-5}$} &
        {\footnotesize $m_2=10^{-4}$}
        \\

        \rowlab{$\dot{y}_1$}
        &
        \maincurveplot{2}{AE3}{AE3_mapa_all}
        &
        \maincurveplot{9}{AE3}{AE3_mapa_all}
        &
    \multicolumn{1}{
        >{\centering\arraybackslash}m{\rightmaincurvefigw}||
    }{
        \rightmaincurveplot{16}{AE3}{AE3_mapa_all}
    }
        &
        \rightmaincurveplot{23}{AE3}{AE3_mapa_all}
        & 
        \rowlabten{$\mathcal{H}\times10^{-5}$}
        
        \\
        
        \rowlab{$\dot{y}_1$}
        &
        \maincurveplot{3}{AE3}{AE3_mapa_all}
        &
        \maincurveplot{10}{AE3}{AE3_mapa_all}
        &
    \multicolumn{1}{
        >{\centering\arraybackslash}m{\rightmaincurvefigw}||
    }{
        \rightmaincurveplot{17}{AE3}{AE3_mapa_all}
    }
        &
        \rightmaincurveplot{24}{AE3}{AE3_mapa_all}
        &
        \rowlabten{$L\times10^{-4}$}
        \\
        
        &
        {\footnotesize $x _2$} &
        {\footnotesize $x _2$} &
        {\footnotesize $x _2$} &
        {\footnotesize $x _2$} &
        \\
    \end{tabular}
    \end{adjustbox}
 \caption{Hill families of solutions computed for $m_0=1-m_1-m_2$, $m_1=10^{-4}$ and $m_2\in\{10^{-7},10^{-6},10^{-5},10^{-4}\}$ for each of the four columns. The rows show the curve continuation in $(x_2,\dot{y}_1)$, first colored according to according to the value of the total energy and second according to the value of the total angular momentum.}
    \label{fig:AE3_Hill1}
\end{figure}

For each of these sets of masses we compute the relative periodic solutions satisfying the conditions described in Section~\ref{sec:NumComp}. In this case we start with $x_2$ close to --but larger than-- $x_1=1$. In Figure~\ref{fig:AE3_Hill} we show the continuation curves in $(x_2,\dot{y}_1)$ colored according to different magnitudes; the number of real eigenvalues $k$, the velocity $\dot{y}_2$, the normalized angle between alignments $\theta/\pi$, the sign of relative energy among bodies $1$ and $2$, $h_{1,2}$. As a last row, initial $x$-component of the eccentricity vectors for bodies $1$ and $2$ as approximated by their Keplerian motions is included; for $B_1$ with respect to $B_0$, and for $B_2$ with respect to $B_1$ meanwhile $h_{1,2}<0$, and with respect to $B_0$ when $h_{1,2}>0$. Figure~\ref{fig:AE3_Hill1} shows the continuation curves colored by the total energy $\mathcal{H}$ and the total angular momentum $L$. The understanding of particular elements of the image explanations will be aid by information collected in Table~\ref{tab:AE3}. Again, the table contains data of the computed solutions at specific locations of the continuation curves, as well as, the approximation through two-body problem of Keplerian elements of bodies $1$ and $2$.

Let's first notice that the shape of the continuation curves has changed. Now, for the first two cases, for which $B_2$ (the satellite) has the smallest masses of the ones considered, $B_1$ starts with velocity close to the unity, what, recalling Equation~\eqref{eq:ecvec} means that $B_1$ is describing a circular or almost circular orbit around $B_0$. When the mass of the satellite, $B_2$, is comparable to the one of the planet, $B_1$, we still find satellite type of solutions, but the velocity of $B_1$ is larger than $1$. Regardless of this, vast majority of solutions computed in this configuration are stable, i.e. they have no real eigenvalues.

The similarities with Poincar\'e type of solutions are basically that when bodies $1$ and $2$ are close, velocity of $B_2$ gets its maximum. Also, at the beginning of the continuation, the variation of the angle $\theta$ is fluid until the solution is close to happen at $\theta=2\pi$, ($\theta/\pi=2$), that happens for high values of velocity and eccentricity of $B_1$. Of course, total angular momentum is not null for any solution, reaching its minimum at the closest approaches between bodies $1$ and $2$, and total energy is always negative, meaning that the motion of the three bodies is bounded.

In spite of the fact that the total energy must be negative, we find a change in the sign of the energy of relative system for bodies $1$ and $2$, $h_{1,2}$ given by Equation~\eqref{eq:h12} and shown in the fourth row of Figure~\ref{fig:AE3_Hill}. Meanwhile these two bodies are close enough, and given the difference of masses, $B_2$ orbits $B_1$ resulting in $h_{1,2}<0$; this situation is marked by a gray shadow in the last row of Figure~\ref{fig:AE3_Hill}, where we see that the eccentricity of body 2 in its orbit around body 1, starts being small.

However, as $x_2$ increases, orbit of $B_2$ around $B_1$ gets more and more eccentric rapidly and energy $h_{1,2}$ eventually gets positive. As a result, through this continuation of RPOs to the Three-Body problem, $B_2$ that started being a satellite of $B_1$ gets free of its gravitational capture and begins to orbit $B_0$ while intersecting orbit of $B_1$, as will be shown later. If body $0$ is a star, bodies $1$ and $2$ are said to be circumstellar objects. From this moment, orbital elements of the motion of $B_2$ are computed with respect to $B_0$, what explains the jump in the graph for $e_{x_2}$. When $B_2$ begins its circumstellar orbit, it describes a low eccentric orbit that gets more eccentric as $x_2$ continues increasing. 


For these families of Hill solutions, special solutions collected in Table~\ref{tab:AE3} correspond to (a) some of the \textbf{first computed solutions} of the continuation curves, with bodies $1$ and $2$ are very close. Then, (b) the computed solutions at \textbf{resonance} $\theta=\pi$. Notice that in these two situations the value of $x_2$ is very small, and $B_2$ is still gravitationally bounded to $B_1$, what corresponds to the typical satellite behavior. The third special solution we include in the table corresponds to (c) the \textbf{first unbounded solution} among bodies $1$ and $2$, identified by the first solution with $h_{1,2}>0$. Notice that this solution is very close to the minimum reached by $\dot{y}_1$.

At this point, it is worth recalling the approximation of Hill radius for $B_1$,
\begin{equation}
    r_{H_1} \approx a_1 \bigg(\frac{m_1}{3m_0}\bigg)^{1/3},
    \label{eq:rHill}
\end{equation}
that for our cases corresponds to an approximated value of $0.032$. This radius gives an approximation of the region around body $1$ where its own gravity dominates over the tidal pull of the much larger body, $B_0$. Looking at the $x_2$ value in Table~\ref{tab:AE3} for the first unbounded solution among bodies $1$ and $2$, we find a reasonable agreement among these two results, since these values correspond to distance to $B_1$ of $\sim 0.04-0.05$. Keep in mind that Equation~\eqref{eq:h12} is not a real constant of motion, nor Equation~\eqref{eq:rHill} is an exact value, but both are useful constructions to understand the limits of the gravitational bounded motion among the planet, $B_1$, and the satellite, $B_2$.

Last special solution included in the table corresponds, as in previous examples, to the (d) \textbf{resonance} $\theta=2\pi$; that is, when the three bodies align in positions and velocities at their respective initial positions and velocities. Again, this situation is given for very eccentric orbits.

\begin{table}[ht]
\centering
{\footnotesize
\begin{tabular}{|c|c|c|c|c||c|c|c|c|c|c|}
\hline\multicolumn{5}{|c||}{\textbf{Solution of TBP}} & \multicolumn{6}{|c|}{\textbf{Approximation of keplerian elements}} \\ \hline
\multicolumn{11}{|c|}{\hspace{-3 cm}(a) \textbf{Closest solutions among bodies} $1$ \textbf{and} $2$} \\ \hline
Case & $m_2$  &   $x_2$  & $\theta$     & $T$     & $e_{x_1}$      & $a_1$ & $T_1$  & $e_{x_2}$      & $a_2$  & $T_2$\\ \hline
1 & $10^{-7}$ & 1.0021 & 0.0587 & 0.05868 & $4.4\times10^{-4}$ & 1.0000 & 6.2832 & $1.6\times10^{-4}$ & 0.0021 & 0.05905 \\ \hline
2 & $10^{-6}$ & 1.0019 & 0.0520 & 0.05199 & $4.5\times10^{-3}$ & 1.0000 & 6.2834 & $1.2\times10^{-4}$ & 0.0019 & 0.05227 \\ \hline
3 & $10^{-5}$ & 1.0030 & 0.0983 & 0.09837 & $3.5\times10^{-2}$ & 1.0012 & 6.2946 & $4.5\times10^{-4}$ & 0.0030 & 0.09951 \\ \hline
4 & $10^{-4}$ & 1.0030 & 0.0710 & 0.07121 & $2.8\times10^{-1}$ & 1.0819 & 7.0707 & $2.5\times10^{-4}$ & 0.0030 & 0.07174 \\ \hline \hline

\multicolumn{11}{|c|}{\hspace{-3 cm}(b) \textbf{Resonance} $\theta=\pi$} \\ \hline
Case & $m_2$  &   $x_2$  & $\theta$     & $T$     & $e_{x_1}$      & $a_1$ & $T_1$  & $e_{x_2}$      & $a_2$  & $T_2$\\ \hline
1 & $10^{-7}$ & 1.0371 & 3.1562 & 3.13597 & $9.3\times10^{-5}$ & 1.0000 & 6.2832 & $5.4\times10^{-1}$ & 0.0521 & 7.4657 \\ \hline
2 & $10^{-6}$ & 1.0372 & 3.1522 & 3.13359 & $9.2\times10^{-4}$ & 1.0000 & 6.2832 & $5.3\times10^{-1}$ & 0.0521 & 7.4334 \\ \hline
3 & $10^{-5}$ & 1.0384 & 3.1593 & 3.15611 & $8.7\times10^{-3}$ & 1.0001 & 6.2839 & $5.4\times10^{-1}$ & 0.0544 & 7.5932 \\ \hline
4 & $10^{-4}$ & 1.0475 & 3.1425 & 3.23996 & $5.9\times10^{-2}$ & 1.0035 & 6.3165 & $5.5\times10^{-1}$ & 0.0686 & 7.9763 \\ \hline \hline

\multicolumn{11}{|c|}{\hspace{-3 cm}(c) \textbf{First unbounded solution} $h_{1,2}>0$} \\ \hline
Case & $m_2$  &   $x_2$  & $\theta$     & $T$     & $e_{x_1}$      & $a_1$ & $T_1$  & $e_{x_2}$      & $a_2$  & $T_2$\\ \hline

1 & $10^{-7}$ & 1.0467 & 3.9575 & 3.95837 & $8.7\times10^{-5}$ & 1.0000 & 6.2832 & $-8.6\times10^{-2}$ & 1.0390 & 6.6545 \\ \hline
2 & $10^{-6}$ & 1.0468 & 3.9515 & 3.95481 & $8.6\times10^{-4}$ & 1.0000 & 6.2832 & $-8.5\times10^{-2}$ & 1.0392 & 6.6561\\ \hline
3 & $10^{-5}$ & 1.0482 & 3.9469 & 3.97342 & $8.1\times10^{-3}$ & 1.0001 & 6.2838 & $-8.1\times10^{-2}$ & 1.0414 & 6.6776 \\ \hline
4 & $10^{-4}$ & 1.0593 & 3.8991 & 4.07422 & $5.5\times10^{-2}$ & 1.0030 & 6.3121 & $-5.4\times10^{-2}$ & 1.0561 & 6.8198 \\ \hline \hline

\multicolumn{11}{|c|}{\hspace{-3 cm}(d) \textbf{Resonance} $\theta=2\pi$} \\ \hline
Case & $m_2$  &   $x_2$  & $\theta$     & $T$     & $e_{x_1}$      & $a_1$ & $T_1$  & $e_{x_2}$      & $a_2$  & $T_2$\\ \hline
1 & $10^{-7}$ & 3.5806 & $2\pi$ & 21.76990 & $5.6\times10^{-1}$ & 1.4646 & 11.1368 & $-5.6\times10^{-1}$ & 2.7179 & 28.1548 \\ \hline
2 & $10^{-6}$ & 3.5805 & $2\pi$ & 21.76970 & $5.6\times10^{-1}$ & 1.4646 & 11.1367 & $-5.6\times10^{-1}$ & 2.7179 & 28.1545 \\ \hline
3 & $10^{-5}$ & 3.5814 & $2\pi$ & 21.77920 & $5.6\times10^{-1}$ & 1.4649 & 11.1399 & $-5.6\times10^{-1}$ & 2.7187 & 28.1671 \\ \hline
4 & $10^{-4}$ & 3.5832 & $2\pi$ & 21.80230 & $5.6\times10^{-1}$ & 1.4653 & 11.1455 & $-5.6\times10^{-1}$ & 2.7205 & 28.1945 \\ \hline

\end{tabular}
}
\caption{Some relevant data for the Hill type of RPOs found for the sets of masses shown in Figure~\ref{fig:AE3_Hill} for bodies $1$ and $2$ very proximate, at resonances $\theta=\pi,2\pi$ and when body $2$ gets free from gravitational bound of body $1$. Solution data corresponds to the initial position of body $2$, $x_2$, angle between the two alignments, $\theta$ and period of the solution, $T$. Last six columns contain Keplerian elements eccentricity, semi-major axis and period for body $1$ ($e_{x_1}$, $a_1$, $T_1$) with respect to $B_0$, and for body $2$ ($e_{x_2}$, $a_2$, $T_2)$, with respect to $B_1$ meanwhile $h_{1,2}<0$ and with respect to $B_0$ otherwise.}
\label{tab:AE3}
\end{table}

\paragraph{Some trajectories}
\begin{figure}[h!]
    \centering
    \setlength{\tabcolsep}{2pt}
    \renewcommand{\arraystretch}{0.75}
    \setlength{\arrayrulewidth}{0.4pt}
    \setlength{\doublerulesep}{1.2pt}

    \begin{adjustbox}{max totalsize={\textwidth}{0.9\textheight},center}
    \begin{tabular}{@{}
        >{\centering\arraybackslash}m{0.03\textwidth}||
        >{\centering\arraybackslash}m{0.225\textwidth}||
        *{4}{>{\centering\arraybackslash}m{\mainfigw}}
        @{}}

        $m _2$ & &
        (a) & (b) & (c) & (d)
        \\ \hline

         \multirow{2}{*}[-3em]{\rowlab{$10^{-7}$}}
        &
        \multirow{2}{*}[0em]{\maincurveplot{6}{AE3}{AE3_mapa_all}}
        &
        \mainplot{1}{AE3}{copHill0}
        &
        \mainplot{4}{AE3}{copHill0}
        &
        \mainplot{3}{AE3}{copHill0}
        &
        \mainplot{6}{AE3}{copHill0}
        \\

        &
        &
        \zoomplot{1}{AE3}{copHill0}
        &
        \zoomplot{4}{AE3}{copHill0}
        &
        \zoomplot{3}{AE3}{copHill0}
        &
        \zoomplot{6}{AE3}{copHill0}
        \\

        & 
    {\footnotesize
      \gpline[gpzero]{0 }%
      \gpline[gptwo]{2 }%
      \gpline[gpfour]{4}%
    }%
    & \multicolumn{1}{r}{\footnotesize\gpline[gptheta]{alignment}}&\multicolumn{3}{r}{\footnotesize\gplinemarked[gpsun]{$B_0$ }\gplinemarked[gpjup]{$B_1$ }\gplinemarked[gpsat]{$B_2$ }}
    \end{tabular}
    \end{adjustbox}

    \caption{Hill solutions for the first case in Figure~\ref{fig:AE3_Hill}; $m_1=10^{-4}$, $m_2=10^{-7}$, $m_0=1-m_1-m_2$. Columns (a)--(d) correspond to the selected solutions marked in the continuation curves of the second row. Bigger images show the trajectories in the $(x,y)$ plane of the three bodies; in blue body $2$, in orange body $1$, and in green body $0$. Small images are zoom-in body $0$.}
    \label{fig:AE3_cop0}
\end{figure}

In Figure~\ref{fig:AE3_cop0} four RPOs to the TBP are shown; images below each large image contains a zoom of the trajectory for body $0$. They correspond to special locations of the first case in Figure~\ref{fig:AE3_Hill}. Column (a) contains one of the first solutions in the continuation, when $B_2$ is gravitationally bounded to $B_1$. Notice that, in this case the value of the period of the solution, as well as the value of $\theta$, is very small. For this reason we have marked the angle corresponding to the relative periodic solution (in light gray line), and then we include the trajectory for 15 more periods, in order to show the coupling among these two bodies. In the following cases, trajectories correspond to time propagation from $t=0$ to $t=T$. Column (b) corresponds to the solution at resonance $\theta=\pi$, for which the three bodies describe half an orbit. Column (c) corresponds to the first computed solution for which $h_{1,2}>0$, that means, the gravitational decoupling of bodies $1$ and $2$. Finally, column (d) includes the trajectories for the $\theta=2\pi$ resonance, at which bodies $1$ and $2$ describe almost symmetric orbits with respect to body $0$, that is slightly displaced from the center of mass, where the origin is set. Notice that this solution agrees with the values estimated for the eccentricities of the bodies, but not for their semi-major axis and periods. As explained previously Keplerian approximation of the orbital elements fails for very eccentric Three-Body Problem solutions.

It is remarkable that along the continuation family of Hill relative periodic solutions of the Three‑Body Problem, the configuration can transit from a typical satellite regime (as in solution (a)) to a central body with two circumstellar companions orbiting around it, as in solution (d). The latter configuration should not be confused with planetary Poincar\'e solutions. As we have explained, such a continuation from Hill‑type  solutions to Poincar\'e‑type  solutions cannot be achieved without an appropriate regularization of the system.

Here we must recall that Poincar\'e solutions of the Three-Body Problem are typically constructed from a Keplerian (two body) problem that is perturbed by a third one. Classical proofs 
assume the bodies to be far away from collision.
In other words, the orbits of the two less massive bodies in classical Poincar\'e solutions do not cross, while the orbits computed in this section, continued from typical Hill configuration, cross. We believe these solutions should not be interpreted as typical planetary solutions, but as a natural dynamical drift of a small body (like an asteroid) that eventually can be trapped by a more massive one (like a planet). What is more, this transit, as explained in the Introduction, has been observed in our Solar System \cite{Granvik2012,delaFuenteMarcos2020}.

\subsection{Binary solutions}\label{sec:Binary}
In the last two sections, we find different kinds of solutions that have a common characteristic; a hierarchy among the masses. This resulted in just one central body around which the other two revolve, directly or through a combined orbit (planet+satellite). In this section we deal with configurations in which two of the bodies have large masses (bodies $0$ and $1$) meanwhile the mass of the third one (body $2$) is small, comparatively. This situation would correspond, for example, to a binary star system and a planet.

In this configuration, schematically described in Figure~\ref{fig:ic-binary}, we will distinguish between two possible situations. If the velocity of $B_2$ is comparable to that of $B_1$, we find the configuration in which the planet revolves around the orbit defined by the two stars, performing what is called a circumbinary orbit. This situation is studied in Section~\ref{subsec:AE2}. However, if $B_2$ has small velocity, the configuration is very different, in fact we find that the sense of rotation of body $2$ eventually changes and at some point in the continuation begins to describe a retrograde orbit with respect to the binary system. This situation is addressed in Section~\ref{subsec:AE4}, which covers solutions where the planet ($B_2$) orbits only one of the stars ($B_1$, for proximity). This situation would correspond again to a circumstellar orbit. 

\begin{figure}[h]
\centering
\begin{minipage}{0.55\textwidth}
\centering
 \includegraphics[width=\linewidth]{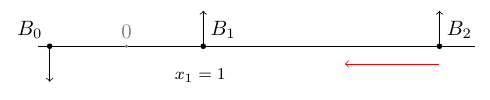}
\end{minipage}
\hfill
\begin{minipage}{0.35\textwidth}
\centering
\begin{tabular}{|c|c|c|c|c|}
\hline
$x_1$ & $x_2$ & $\dot{y}_1$ & \multicolumn{2}{c|}{$\dot{y}_2$}\\
\hline \hline
$1$ & $8$ & $0.7$ & $0.7$ & $0.3$\\ \hline
\end{tabular}
\end{minipage}
\caption{Left, scheme of the initial positions and directions of motion for the three bodies in Binary regime; gray $0$ marks the origin of coordinates and the red arrow shows the sense of the continuation. Right, initial conditions taken for the continuation. Each value for $\dot{y}_2$ corresponds to a different configuration; see text. Notice that $x_0$ and $\dot{y}_0$ are given by conservation of the center of mass.}
\label{fig:ic-binary}
\end{figure}

\subsubsection{Circumbinary planet}\label{subsec:AE2}


In this situation we consider the configuration and conditions given in Figure~\ref{fig:ic-binary} with $\dot{y}_2=0.7$. As we will see, we find relative periodic solutions describing a binary star system and a circumbinary planet. Notice that this situation is often treated as two decoupled two-body problems; one for the binary ($B_1$ relative to $B_0$) and other for the planet ($B_2$ relative to the orbit of the binary).

The masses considered for the less massive body are  $m_2\in\{10^{-6},10^{-5},10^{-4},10^{-3}\}$, while the masses of bodies $0$ and $1$ are computed as $m_0=m_1=(1-m_2)/2$, such that the total mass is one.

\begin{figure}[h!]
    \centering
    \footnotesize
    \setlength{\tabcolsep}{3pt}
    \renewcommand{\arraystretch}{1.5}

    \begin{adjustbox}{max totalsize={\textwidth}{0.98\textheight},center}
    \begin{tabular}{@{}
        >{\centering\arraybackslash}m{0.025\textwidth}
        *{3}{>{\centering\arraybackslash}m{\maincurvefigw}}
        >{\centering\arraybackslash}m{\rightmaincurvefigw}
        @{}
        >{\centering\arraybackslash}m{0.025\textwidth}}

        &
        {\footnotesize $m_2=10^{-6}$} &
        {\footnotesize $m_2=10^{-5}$} &
        {\footnotesize $m_2=10^{-4}$} &
        {\footnotesize $m_2=10^{-3}$}
        \\

        \rowlab{$\dot{y}_1$}
        
        &
        \maincurveplot{5}{AE2}{AE2_mapa_all}
        &
        \maincurveplot{12}{AE2}{AE2_mapa_all}
        &
        \maincurveplot{19}{AE2}{AE2_mapa_all}
        &
        \rightmaincurveplot{26}{AE2}{AE2_mapa_all}
        &
        \rowlab{\scriptsize $k$}
        
        \\

        \rowlab{$\dot{y}_1$}
        &
        \maincurveplot{1}{AE2}{AE2_mapa_all}
        &
        \maincurveplot{8}{AE2}{AE2_mapa_all}
        &
        \maincurveplot{15}{AE2}{AE2_mapa_all}
        &
        \rightmaincurveplot{22}{AE2}{AE2_mapa_all}
        & \rowlab{$\dot{y}_2$}
        
        \\
        
        \rowlab{$\dot{y}_1$}
        &
        \maincurveplot{4}{AE2}{AE2_mapa_all}
        &
        \maincurveplot{11}{AE2}{AE2_mapa_all}
        &
        \maincurveplot{18}{AE2}{AE2_mapa_all}
        &
        \rightmaincurveplot{25}{AE2}{AE2_mapa_all}
        &
        \rowlab{$\theta/\pi$}
        
        \\

        \rowlab{$e_x$}
        &
        \eccentrycurveplot{7}{AE2}{AE2_mapa_all}
        &
        \eccentrycurveplot{14}{AE2}{AE2_mapa_all}
        &
        \eccentrycurveplot{21}{AE2}{AE2_mapa_all}
        &
        \eccentrycurveplot{28}{AE2}{AE2_mapa_all}
        \\
        
        &
        {\footnotesize $x _2$} &
        {\footnotesize $x _2$} &
        {\footnotesize $x _2$} &
        {\footnotesize $x _2$} &
        \\
    \end{tabular}
    \end{adjustbox}

    \caption{Binary families of solutions computed for $m_0=m_1=(1-m_2)/2$ and $m_2\in\{10^{-6},10^{-5},10^{-4},10^{-3}\}$ for each of the four columns. First three rows show the curve continuation in $(x_2,\dot{y}_1)$, colored according to the stability index $k$, to the velocity of the second body $\dot{y}_2$, and to the angle $\theta/\pi$. Last row shows the initial $x$ component of the eccentricity vectors of $B_1$ and $B_2$, with respect to $B_0$ and to $B_0+B_1$, respectively. Insets show a zoom of the bottom-left region.}
\label{fig:AE2_Binary}
\end{figure}

\begin{figure}[h!]
    \centering
    \footnotesize
    \setlength{\tabcolsep}{3pt}
    \renewcommand{\arraystretch}{1.5}

    \begin{adjustbox}{max totalsize={\textwidth}{0.98\textheight},center}
    \begin{tabular}{@{}
        >{\centering\arraybackslash}m{0.025\textwidth}
        *{3}{>{\centering\arraybackslash}m{\maincurvefigw}}
        >{\centering\arraybackslash}m{\rightmaincurvefigw}
        @{}
        >{\centering\arraybackslash}m{0.025\textwidth}}

        &
        {\footnotesize $m_2=10^{-6}$} &
        {\footnotesize $m_2=10^{-5}$} &
        {\footnotesize $m_2=10^{-4}$} &
        {\footnotesize $m_2=10^{-3}$}
        \\

        \rowlab{$\dot{y}_1$}
        &
        \maincurveplot{2}{AE2}{AE2_mapa_all}
        &
        \maincurveplot{9}{AE2}{AE2_mapa_all}
        &
        \maincurveplot{16}{AE2}{AE2_mapa_all}
        &
        \rightmaincurveplot{23}{AE2}{AE2_mapa_all}
        & 
        \rowlabten{$\mathcal{H}\times10^{-2}$}
        
        \\
        
        \rowlab{$\dot{y}_1$}
        &
        \maincurveplot{3}{AE2}{AE2_mapa_all}
        &
        \maincurveplot{10}{AE2}{AE2_mapa_all}
        &
        \maincurveplot{17}{AE2}{AE2_mapa_all}
        &
        \rightmaincurveplot{24}{AE2}{AE2_mapa_all}
        &
        \rowlab{$L$}
        \\
        
        &
        {\footnotesize $x _2$} &
        {\footnotesize $x _2$} &
        {\footnotesize $x _2$} &
        {\footnotesize $x _2$} &
        \\
    \end{tabular}
    \end{adjustbox}

\caption{Binary families of solutions computed for $m_0=m_1=(1-m_2)/2$ and $m_2\in\{10^{-6},10^{-5},10^{-4},10^{-3}\}$ for each of the four columns. The rows show the curve continuation in $(x_2,\dot{y}_1)$, first colored according to according to the value of the total energy and second according to the value of the total angular momentum.}
    \label{fig:AE2_Binary1}
\end{figure}

\begin{table}[h!]
\centering
{\footnotesize
\begin{tabular}{|c|c|c|c|c||c|c|c|c|c|c|}
\hline\multicolumn{5}{|c||}{\textbf{Solution of TBP}} & \multicolumn{6}{|c|}{\textbf{Approximation of keplerian elements}} \\ \hline
\multicolumn{11}{|c|}{\hspace{-3 cm}(a) \textbf{Resonance} $\theta=\pi$} \\ \hline
Case & $m_2$  &   $x_2$  & $\theta$     & $T$     & $e_{x_1}$      & $a_1$ & $T_1$  & $e_{x_2}$      & $a_2$  & $T_2$\\ \hline
0 & $10^{-6}$ & 4.2672 & $\pi$ & 26.6316 & $7.3\times10^{-7}$ & 1.0000 & 6.2832 & $-1.60\times10^{-2}$ & 4.2661 & 55.3639 \\ \hline
1 & $10^{-5}$ & 4.2672 & $\pi$ & 26.6343 & $7.3\times10^{-6}$ & 1.0000 & 6.2832 & $-1.60\times10^{-2}$ & 4.2661 & 55.3638 \\ \hline
2 & $10^{-4}$ & 4.2672 & $\pi$ & 26.6609 & $7.3\times10^{-5}$ & 1.0000 & 6.2835 & $-1.64\times10^{-2}$ & 4.2660 & 55.3628 \\ \hline
3 & $10^{-3}$ & 4.2822 & $\pi$ & 26.8511 & $7.3\times10^{-4}$ & 1.0000 & 6.2863 & $-1.92\times10^{-2}$ & 4.2806 & 55.6459 \\ \hline
\hline

\multicolumn{11}{|c|}{\hspace{-3 cm}(b) \textbf{Turning point}} \\ \hline
Case & $m_2$  &   $x_2$  & $\theta$     & $T$     & $e_{x_1}$      & $a_1$ & $T_1$  & $e_{x_2}$      & $a_2$  & $T_2$\\ \hline
0 & $10^{-6}$ & 3.5414 & $6.2590$ & 35.4910 & $2.75\times10^{-4}$ & 1.0000 & 6.2832 & $-2.60\times10^{-1}$ & 3.3167 & 37.9530 \\ \hline
1 & $10^{-5}$ & 3.5449 & $6.2131$ & 35.3946 & $9.17\times10^{-4}$ & 1.0000 & 6.2832 & $-2.54\times10^{-1}$ & 3.3293 & 38.1686 \\ \hline
2 & $10^{-4}$ & 3.5574 & $6.0898$ & 35.1834 & $3.25\times10^{-3}$ & 1.0000 & 6.2836 & $-2.40\times10^{-1}$ & 3.3640 & 38.7668 \\ \hline
3 & $10^{-3}$ & 3.6094 & $5.8010$ & 34.9828 & $1.22\times10^{-2}$ & 1.0001 & 6.2877 & $-2.10\times10^{-1}$ & 3.4569 & 40.3839 \\ \hline
\hline

\multicolumn{11}{|c|}{\hspace{-3 cm}(c) \textbf{Minimum of} $e_{x_2}$} \\ \hline
Case & $m_2$  &   $x_2$  & $\theta$     & $T$     & $e_{x_1}$      & $a_1$ & $T_1$  & $e_{x_2}$      & $a_2$  & $T_2$\\ \hline
0 & $10^{-6}$ & 4.1482 & $6.2832$ & 48.0826 & $1.82\times10^{-1}$ & 1.0344 & 6.6105 & $-2.88\times10^{-1}$ & 3.8313 & 47.1200 \\ \hline
1 & $10^{-5}$ & 4.1491 & $6.2829$ & 48.1009 & $1.83\times10^{-1}$ & 1.0345 & 6.6113 & $-2.88\times10^{-1}$ & 3.8322 & 47.1356 \\ \hline
2 & $10^{-4}$ & 4.1574 & $6.2803$ & 48.2670 & $1.84\times10^{-1}$ & 1.0352 & 6.6180 & $-2.88\times10^{-1}$ & 3.8399 & 47.2779 \\ \hline
3 & $10^{-3}$ & 4.2313 & $6.2574$ & 49.7608 & $1.99\times10^{-1}$ & 1.0413 & 6.6793 & $-2.87\times10^{-1}$ & 3.9088 & 48.5556 \\ \hline

\multicolumn{11}{|c|}{\hspace{-3 cm}(d) \textbf{Closest to resonance} $\theta=2\pi$} \\ \hline
Case & $m_2$  &   $x_2$  & $\theta$     & $T$     & $e_{x_1}$      & $a_1$ & $T_1$  & $e_{x_2}$      & $a_2$  & $T_2$\\ \hline
0 & $10^{-6}$ & 7.2380 & $6.2832$ & 128.027 & $5.74\times10^{-1}$ & 1.4925 & 11.456 & $-1.97\times10^{-1}$ & 6.9686 & 115.584 \\ \hline
1 & $10^{-5}$ & 7.2382 & $6.2832$ & 128.034 & $5.74\times10^{-1}$ & 1.4924 & 11.456 & $-1.97\times10^{-1}$ & 6.9687 & 115.588 \\ \hline
2 & $10^{-4}$ & 7.2383 & $6.2829$ & 128.034 & $5.74\times10^{-1}$ & 1.4918 & 11.449 & $-1.97\times10^{-1}$ & 6.9678 & 115.565 \\ \hline
3 & $10^{-3}$ & 7.2387 & $6.2804$ & 128.039 & $5.72\times10^{-1}$ & 1.4854 & 11.380 & $-2.00\times10^{-1}$ & 6.9590 & 115.346 \\ \hline
\end{tabular}
}
\caption{Some relevant data for the Binary type of RPOs found for the four sets of masses shown in Figure~\ref{fig:AE2_Binary} at resonance $\theta=\pi$, at the turning point, at the minimum of the $x-$component of eccentricity vector of body $2$ and at the closest computed solution to resonance $\theta=2\pi$. Solution data corresponds to the initial position of body $2$, $x_2$, angle between the two alignments, $\theta$ and period of the solution, $T$. Last six columns contain eccentricity, semi-major axis and period for body $1$ ($e_{x_1}$, $a_1$, $T_1$) and for body $2$ ($e_{x_2}$, $a_2$, $T_2)$, by approximating their motion to a Keplerian orbit around $B_0$ and around the binary formed by $B_0$ and $B_1$, respectively.}
    \label{tab:AE2}
\end{table}

Figure~\ref{fig:AE2_Binary} shows the continuation curves $(x_2,\dot{y}_1)$ again colored according to the number of real eigenvalues, to the value of $\dot{y}_2$, to the value of the normalized angle between alignments $\theta/\pi$ and the curves $(x_2,e_{x_1})$ and $(x_2,e_{x_2})$. Notice that in this configuration, Keplerian orbital elements of $B_1$ are computed with respect to $B_0$ and the ones of $B_2$ are computed with respect to the binary formed by bodies $0$ and $1$. Figure~\ref{fig:AE2_Binary1} contains the continuation curves $(x_2,\dot{y}_1)$ colored according to the total energy and the total angular momentum of the system. And Table~\ref{tab:AE2} collects information about some special solutions of the continuation curves.

Although this kind of configurations differs from the ones in the last two sections in the sense that now we have two central bodies, we find some similarities between the cases. First, we observe a similar profile of the continuation curves to the ones found in Section~\ref{sec:Poincare} for Poincar\'e solutions. Again, the minimum of the total angular momentum is given in the closest approach between bodies $1$ and $2$, and, of course, $\mathcal{H}<0$ for all the solutions, guaranteeing that the motion of the three bodies is bounded. Besides, we observe that meanwhile $B_1$ and $B_2$ are far from each other, they describe circular stable orbits, including the solution at which \textbf{resonance} $\theta=\pi$ is crossed (special solution (a)). We also observe a \textbf{turning point}, special solution (b), in the continuation curve when these two bodies come too close. Before the turning point the eccentricity of $B_2$ becomes non-zero, reaching a minimum shortly after it. Notice that the \textbf{minimum of eccentricity} $e_{x_2}$ is significantly pronounced (special solution (c)), again softer as $m_2$ is larger. From this situation, eccentricity and velocity of body $1$ start growing and the variation of the angle $\theta$ becomes slow. 

Also, following the same reasoning made in Section~\ref{sec:Poincare}, the fact that velocity of $B_1$ remains constant meanwhile bodies $1$ and $2$ are not close, has its explanation through Equation~\eqref{eq:ecvec}. In these solutions we keep on considering as initial condition $x_1=1$, however, relative position among bodies $0$ and $1$ is now $x_{1,0}\approx2$, and not close to $1$ as it happened in the Poincar\'e solutions. Therefore, for body $1$ to describe circular orbits, $e_1=0$, relative velocity of $B_1$ with respect to $B_0$ must be, following Equation~\eqref{eq:rel-x-vy}, $\dot{y}_{1,0}\approx\sqrt{1/2}$. Now, taking into account that bodies forming the binary, $B_0$ and $B_1$, move at the same velocity in opposite direction, $\dot{y}_1=\dot{y}_{1,0}/2\approx0.3536$. That corresponds to the constant value found.

This affects the positions at which body $2$ is expected to be found at resonances $\theta=\pi$ and $\theta=2\pi$, that are included as vertical lines in the Figure~\ref{fig:AE2_Binary}. Following Remark~\ref{rmk:resonances}, since now semi-major axis of the relative orbit $a_{1,0}\approx2$, resonance $\theta=\pi$, related with $T_2=3T_1$, for bodies $1$ and $2$ describing circular or almost circular orbits, is expected to happen at $x_2=\sqrt[3]{3^22^3} \approx 4.16$. Similarly, resonance $\theta=2\pi$, related with $T_2=2T_1$, for bodies $1$ and $2$ describing circular or almost circular orbits, is expected to happen at $x_2=\sqrt[3]{2^22^3} \approx 3.17$. Close to this position --but without reaching it-- we find again a turning point.

In spite of the similarities, we also find some differences with respect to the previous cases in Section~\ref{sec:Poincare}. The first one is related to the loss of stability of the RPOs to the Three-Body Problem before the turning point. Notice that at these unstable solutions --before the turning point (and not on it)-- is where the maximum of velocity of $B_2$ is found, look at the second row of images in Figure~\ref{fig:AE2_Binary}. Besides, here, we do not only find sets of unstable solutions with $2$ real eigenvalues, but also we find some of them with $4$ real eigenvalues. Another difference is that we have not been able to compute the solution of these three bodies based on an alignment at $\theta=2\pi$. Considering this, the special solution (d) collected in Table~\ref{tab:AE2} corresponds to the last computed orbit that happens to an angle extremely close to $2\pi$.

\paragraph{Some trajectories}
 
\begin{figure}[h!]
    \centering
    \setlength{\tabcolsep}{2pt}
    \renewcommand{\arraystretch}{0.75}
    \setlength{\arrayrulewidth}{0.4pt}
    \setlength{\doublerulesep}{1.2pt}

    \begin{adjustbox}{max totalsize={\textwidth}{0.9\textheight},center}
    \begin{tabular}{@{}
        >{\centering\arraybackslash}m{0.03\textwidth}||
        >{\centering\arraybackslash}m{0.225\textwidth}||
        *{4}{>{\centering\arraybackslash}m{\mainfigw}}
        @{}}

        $m _2$ & &
        (a) & (b) & (c) & (d)
        \\ \hline

\rowlab{$10^{-4}$}
        &
        \maincurveplot{20}{AE2}{AE2_mapa_all}
        &
        \mainplot{6}{AE2}{copbin2}
        &
        \mainplot{3}{AE2}{copbin2}
        &
        \mainplot{1}{AE2}{copbin2}
        &
        \mainplot{5}{AE2}{copbin2}
        \\

        & 
    {\footnotesize
      \gpline[gpzero]{0 }%
      \gpline[gptwo]{2 }%
      \gpline[gpfour]{4}%
    }%
    & \multicolumn{4}{r}{\footnotesize\gplinemarked[gpsun]{$B_0$ }\gplinemarked[gpjup]{$B_1$ }\gplinemarked[gpsat]{$B_2$ }}
    \end{tabular}
    \end{adjustbox}
    
\caption{Binary solutions for the third case in Figure~\ref{fig:AE2_Binary}; $m_0=m_1=(1-m_2)/2$, $m_2=10^{-4}$. Columns (a)--(d) correspond to the selected solutions marked in the continuation curves of the second row. Images show the trajectories in the $(x,y)$ plane of the three bodies; in blue body $2$, in orange body $1$, and in green body $0$.}
\label{fig:AE2_cop2}
\end{figure}

Figure~\ref{fig:AE2_cop2} shows the trajectories in $(x,y)$ of the three bodies computed for the set of masses $m_0=m_1=(1-m_2)/2$, with $m_2=10^{-4}$, that corresponds to the third case in Figure~\ref{fig:AE2_Binary}.

Looking at these solutions we observe how meanwhile body $2$ is far from the binaries, as in (a) when resonance $\theta=\pi$ is crossed, the three bodies describe circular orbits. In fact, bodies $0$ and $1$ move in the same orbit, a circle of radius unity centered at the origin, with a phase shift of $180^\circ$. In panel (b), configuration of the turning point is included, displaying a solution that is approaching the periodic orbit. There, eccentricity of $B_2$ is already significant, as shown in Table~\ref{tab:AE2}, meanwhile $B_0$ and $B_1$ still share the same circular unitary orbit. After the turning point the orbits of the binaries decouple. When $B_2$ reaches the minimum of its $x$ component eccentricity vector (maximum eccentricity element), panel (c), we can easily distinguish the orbits of the two more massive bodies and see how $B_2$ is really close to resonance $\theta=2\pi$. The closest computed solution to this resonance is included in panel (d), where we observe a solution for quite eccentric and large orbits. In agreement with the orbital parameters collected in Table~\ref{tab:AE2}.

\subsubsection{Circumstellar planet}\label{subsec:AE4}

Initial conditions for the kind of solutions discussed in this section are the same as in previous section, collected in Figure~\ref{fig:ic-binary}, except for the initial velocity of $B_2$, that now is lower; $\dot{y}_2=0.3$ instead of $\dot{y}_2=0.7$. Besides, the set of masses are maintained;  $m_2\in\{10^{-6},10^{-5},10^{-4},10^{-3}\}$ and $m_0=m_1=(1-m_2)/2$. With such an apparent small change, configuration found differs significantly from before, since the smallest body, the planet $B_2$ is found to orbit only one of the stars, $B_1$, instead of orbiting the binary system formed by bodies $0$ and $1$.

Similarly as in previous section, Figures~\ref{fig:AE4_Binary} and \ref{fig:AE4_Binary1} show the continuation curves in $(x_2,\dot{y}_1)$ colored according to different magnitudes; the number of real eigenvalues $k$, velocity of body $2$ $\dot{y}_2$, the normalized angle $\theta/\pi$, total energy and total angular momentum of the system. Besides, Figure~\ref{fig:AE4_Binary} also includes the initial $x$ component of the eccentricity vector of bodies $1$ and $2$ computed with respect to body $0$ and body $1$, respectively. 

Again, we start placing $B_2$ far away from $B_1$, and the solutions are continued as $x_2$ is diminished. Notice that the profile of the continuation curves has changed from previous case. In fact, first solutions in the continuation are found for a value of $\theta/\pi\approx2$, that is, the first solutions computed are close to resonance $\theta=2\pi$. At this state, when $B_2$ is far from the two stars, velocity of $B_1$ is high and starts to descend as body $2$ approaches. 

Then, the continuation curves in $(x_2,\dot{y}_1)$ show a descending profile for $\dot{y}_1$ as $x_2$ gets smaller up to a position at which velocity of $B_1$ stabilizes at a value close to $0.35$, see Figure~\ref{fig:AE4_Binary}. As already explained, this value is related with bodies $0$ and $1$ describing circular orbits of semi-major axis equal to $1$. In fact, we can check this if we look at the eccentricity plot at the last row of Figure~\ref{fig:AE4_Binary}.

When this region of \textit{plateau} for $\dot{y}_1$ is reached, the value of the angle between alignments, $\theta$, starts to descend fluidly as $B_2$ approaches $B_1$, as shown in the third row of Figure~\ref{fig:AE4_Binary}. In the same figure, row two, we can check that for all this part of the continuation, solutions have $\dot{y}_2$ low but positive. It is remarkable that eventually, when $B_2$ is close enough to $B_1$, its velocity $\dot{y}_2$ gets negative, meaning that $B_2$ revolves clockwise, although in all previous cases $B_2$ revolves counter-clockwise, as $B_1$ does. As $B_2$ keeps on approaching position of $B_1$, its eccentricity tends to diminish. Only for solutions at which bodies $1$ and $2$ are very close, we find circular stable ($k=0$) solutions. 

Differently from the smooth variation of trajectories found in previous families, the ones included in this section, show an extreme dynamical richness in the sense that solutions of the same family display very different configurations, not always corresponding to any astronomical known system. Therefore in this last section we include a more extensive description of some of the computed trajectories for one of the set of masses.

\begin{figure}[h!]
    \centering
    \footnotesize
    \setlength{\tabcolsep}{3pt}
    \renewcommand{\arraystretch}{1.5}

    \begin{adjustbox}{max totalsize={\textwidth}{0.98\textheight},center}
    \begin{tabular}{@{}
        >{\centering\arraybackslash}m{0.025\textwidth}
        *{3}{>{\centering\arraybackslash}m{\maincurvefigw}}
        >{\centering\arraybackslash}m{\rightmaincurvefigw}
        @{}
        >{\centering\arraybackslash}m{0.025\textwidth}}

        &
        {\footnotesize $m_2=10^{-6}$} &
        {\footnotesize $m_2=10^{-5}$} &
        {\footnotesize $m_2=10^{-4}$} &
        {\footnotesize $m_2=10^{-3}$}
        \\

        \rowlab{$\dot{y}_1$}
        
        &
        \maincurveplot{5}{AE4}{AE4_mapa_all}
        &
        \maincurveplot{12}{AE4}{AE4_mapa_all}
        &
        \maincurveplot{19}{AE4}{AE4_mapa_all}
        &
        \rightmaincurveplot{26}{AE4}{AE4_mapa_all}
        &
        \rowlab{\scriptsize $k$}
        
        \\

        \rowlab{$\dot{y}_1$}
        &
        \maincurveplot{1}{AE4}{AE4_mapa_all}
        &
        \maincurveplot{8}{AE4}{AE4_mapa_all}
        &
        \maincurveplot{15}{AE4}{AE4_mapa_all}
        &
        \rightmaincurveplot{22}{AE4}{AE4_mapa_all}
        & \rowlab{$\dot{y}_2$}
        
        \\
        
        \rowlab{$\dot{y}_1$}
        &
        \maincurveplot{4}{AE4}{AE4_mapa_all}
        &
        \maincurveplot{11}{AE4}{AE4_mapa_all}
        &
        \maincurveplot{18}{AE4}{AE4_mapa_all}
        &
        \rightmaincurveplot{25}{AE4}{AE4_mapa_all}
        &
        \rowlab{$\theta/\pi$}
        
        \\

        \rowlab{$e_x$}
        &
        \eccentrycurveplot{7}{AE4}{AE4_mapa_all}
        &
        \eccentrycurveplot{14}{AE4}{AE4_mapa_all}
        &
        \eccentrycurveplot{21}{AE4}{AE4_mapa_all}
        &
        \eccentrycurveplot{28}{AE4}{AE4_mapa_all}
        \\
        
        &
        {\footnotesize $x _2$} &
        {\footnotesize $x _2$} &
        {\footnotesize $x _2$} &
        {\footnotesize $x _2$} &
        \\
    \end{tabular}
    \end{adjustbox}

    \caption{Binary families of solutions computed for $m_0=m_1=(1-m_2)/2$ and $m_2\in\{10^{-6},10^{-5},10^{-4},10^{-3}\}$ for each of the four columns. First three rows show the curve continuation in $(x_2,\dot{y}_1)$, colored according to the stability index $k$, to the velocity of the second body $\dot{y}_2$, and to the angle $\theta/\pi$. Last row shows the initial $x$ component of the eccentricity vectors of the bodies $1$ and $2$, with respect to $B_0$ and $B_1$, respectively, as the second approaches the first. }
\label{fig:AE4_Binary}
\end{figure}

\begin{figure}[h!]
    \centering
    \footnotesize
    \setlength{\tabcolsep}{3pt}
    \renewcommand{\arraystretch}{1.5}

    \begin{adjustbox}{max totalsize={\textwidth}{0.98\textheight},center}
    \begin{tabular}{@{}
        >{\centering\arraybackslash}m{0.025\textwidth}
        *{2}{>{\centering\arraybackslash}m{\maincurvefigw}}
        >{\centering\arraybackslash}m{\rightmaincurvefigw}
        >{\centering\arraybackslash}m{\rightmaincurvefigw}
        @{}
        >{\centering\arraybackslash}m{0.025\textwidth}}

        &
        {\footnotesize $m_2=10^{-6}$} &
        {\footnotesize $m_2=10^{-5}$} &
        {\footnotesize $m_2=10^{-4}$} &
        {\footnotesize $m_2=10^{-3}$}
        \\

        \rowlab{$\dot{y}_1$}
        &
        \maincurveplot{2}{AE4}{AE4_mapa_all}
        &
        \maincurveplot{9}{AE4}{AE4_mapa_all}
        &
    \multicolumn{1}{
        >{\centering\arraybackslash}m{\rightmaincurvefigw}||
    }{
        \rightmaincurveplot{16}{AE4}{AE4_mapa_all}
    }
        &
        \rightmaincurveplot{23}{AE4}{AE4_mapa_all}
        & 
        \rowlabten{$\mathcal{H}\times10^{-2}$}
        
        \\
        
        \rowlab{$\dot{y}_1$}
        &
        \maincurveplot{3}{AE4}{AE4_mapa_all}
        &
        \maincurveplot{10}{AE4}{AE4_mapa_all}
        &
    \multicolumn{1}{
        >{\centering\arraybackslash}m{\rightmaincurvefigw}||
    }{
        \rightmaincurveplot{17}{AE4}{AE4_mapa_all}
    }
        &
        \rightmaincurveplot{24}{AE4}{AE4_mapa_all}
        &
        \rowlab{$L$}
        \\
        
        &
        {\footnotesize $x _2$} &
        {\footnotesize $x _2$} &
        {\footnotesize $x _2$} &
        {\footnotesize $x _2$} &
        \\
    \end{tabular}
    \end{adjustbox}

    \caption{Binary families of solutions computed for $m_0=m_1=(1-m_2)/2$ and $m_2\in\{10^{-6},10^{-5},10^{-4},10^{-3}\}$ for each of the four columns. The rows show the curve continuation in $(x_2,\dot{y}_1)$, first colored according to according to the value of the total energy and second according to the value of the total angular momentum. }
    \label{fig:AE4_Binary1}
\end{figure}

\begin{table}[h!]
\centering
{\footnotesize
\begin{tabular}{@{}|l|c|c|c||c|c|c|c|c|c|@{}}
\hline & \multicolumn{3}{c||}{\textbf{Solution of TBP}} & \multicolumn{6}{c|}{\textbf{Approximation of keplerian elements}} \\ \hline
   &   $x_2$  & $\theta$     & $T$     & $e_{x_1}$      & $a_1$ & $T_1$  & $e_{x_2}$      & $a_2$  & $T_2$\\ \hline

(a) $\theta\approx2\pi$ & 7.9000 & 6.2832 & 49.7286 & $4.96\times 10^{-1}$ & 1.3270 & 9.6047 & $2.70\times10^{-1}$ & 7.4421 & 180.4004\\ \hline

(b) minimum $e_{x_2}$ & 4.0827 & 6.2799 & 17.7960 & $1.26\times 10^{-3}$ & 1.0000 & 6.2832 & $-5.85 \times 10^{-1}$ & 2.2971 & 30.9364\\ \hline

(c) Intermediate & 3.9468 & 5.9555 & 16.8452 & $1.43\times 10^{-5}$ & 1.0000 & 6.2832 & $-5.11 \times 10^{-1}$ & 2.3376 &  31.7578 \\ \hline

(d) First $\dot{y}_2<0$ & 3.6282 & 5.2654 & 14.8930 & $5.80\times 10^{-6}$ & 1.0000 & 6.2832 & $-3.42\times 10^{-1}$ & 2.3529 & 32.0697\\ \hline

(e) $\theta\approx3\pi/2$ & 3.3658 & 4.7591 & 13.4608 & $4.56\times 10^{-6}$ & 1.0000 & 6.2832 & $-2.26\times 10^{-1}$ & 2.2511 & 30.0118\\ \hline

(f) $\theta=3\pi/2$ & 3.3412 & 4.7133 & 13.3314 & $4.49\times 10^{-6}$ & 1.0000 & 6.2832 & $-2.16\times 10^{-1}$ & 2.2369 & 29.7274\\ \hline

(g) $\theta=\pi$ & 2.4880 & $\pi$ & 8.8823 & $3.93\times 10^{-6}$ & 1.0000 & 6.2832 & $-8.15\times 10^{-3}$ & 1.4879 & 16.1271\\ \hline

(h) circumstellar & 1.1714 & 0.2151 & 0.6085 & $1.05\times 10^{-5}$ & 1.0000 & 6.2832 & $-2.36\times 10^{-4}$ & 0.1714 & 0.6307\\ \hline
\end{tabular}
}
\caption{Some relevant data for the Binary type of RPOs found for the first case shown in Figure~\ref{fig:AE4_Binary} at eight special positions of the family. Solution data corresponds to the initial position of body $2$, $x_2$, angle between the two alignments, $\theta$ and period of the solution, $T$. Last six columns contain eccentricity, semi-major axis and period for body $1$ ($e_{x_1}$, $a_1$, $T_1$) and for body $2$ ($e_{x_2}$, $a_2$, $T_2)$, by approximating their motion to a Keplerian orbit around $B_0$ and around the binary formed by $B_0$ and $B_1$, respectively.}
\label{tab:AE4}
\end{table}

\paragraph{Some trajectories}

In Figure~\ref{fig:AE4_cop0} eight relative periodic solutions for the first case ($m_2=10^{-6}$) in Figure~\ref{fig:AE4_Binary} are included, large images for the time propagation equal to one period $T$, and small ones for ten periods $10T$. The solutions are designated by letters (a)-(h) that are also marked in the continuation curve included in the second column. Some relevant data for the eight selected solutions of this case is collected in Table~\ref{tab:AE4}.

Column (a) in Figure~\ref{fig:AE4_cop0} contains one of the first solutions of the continuation curve, for $\theta\approx 2\pi$, since it is very close to being a periodic orbit, we do not appreciate differences among the time propagation for $T$ or $10T$. Next solution, in column (b) corresponds to the minimum of $e_{x_2}$, that is found significantly displaced from previous solution, however, the angle of alignment is again close to $2\pi$. In this case, the unstable behavior of the RPO and the large eccentricity of $B_2$ makes its trajectory move further away than the expected pattern for a large time propagation. Column (c) shows an RPO where body $2$ describes loops around the binary, similarly as in column (d) that corresponds to the first solution with $\dot{y}_2<0$. From this solution, $B_2$ revolves clockwise.

In the lower part of Figure~\ref{fig:AE4_cop0}, column (e) contains a solution defined by an angle of alignment close to $3\pi/2$. It can be observed how the solution in (e) oscillates around the solution in column (f), that corresponds to $\theta=3\pi/2$, and therefore this a periodic orbit. Also solution in column (g) is a periodic orbit, since it corresponds to resonance $\theta=\pi$, in this situation we see body $2$ describe an orbit that resembles a peanut. Finally, for a position of $B_2$, the planet, very close to $B_1$, we observe in column (h) what could correspond to a circumstellar planet in a binary star system. This last relative periodic solution would be the only configuration along this family of solution that has a meaning in the known astronomical systems.

\begin{figure}[h!]
    \centering
    \setlength{\tabcolsep}{2pt}
    \renewcommand{\arraystretch}{0.75}
    \setlength{\arrayrulewidth}{0.4pt}
    \setlength{\doublerulesep}{1.2pt}

    \begin{adjustbox}{max totalsize={\textwidth}{0.9\textheight},center}
    \begin{tabular}{@{}
        >{\centering\arraybackslash}m{0.03\textwidth}||
        >{\centering\arraybackslash}m{0.225\textwidth}||
        *{4}{>{\centering\arraybackslash}m{\mainfigw}}
        >{\centering\arraybackslash}m{0.01\textwidth}
        @{}}

        $m _2$ & &
        (a) & (b) & (c) & (d)
        \\ \hline

         \multirow{5}{*}[-10em]{\rowlab{$10^{-6}$}}
        &
        \multirow{5}{*}[-8em]{\maincurveplot{6}{AE4}{AE4_mapa_all}}
        &
        \mainplot{1}{AE4}{copbin0_vy2b}
        &
        \mainplot{4}{AE4}{copbin0_vy2b}
        &
        \mainplot{5}{AE4}{copbin0_vy2b}
        &
        \mainplot{6}{AE4}{copbin0_vy2b}
        \\

        &
        &
        \iterTplot{1}{AE4}{copbin0_vy2b}
        &
        \iterTplot{4}{AE4}{copbin0_vy2b}
        &
        \iterTplot{5}{AE4}{copbin0_vy2b}
        &
        \iterTplot{6}{AE4}{copbin0_vy2b}
        & \rotatebox[origin=r]{-90}{\footnotesize $10$ periods}
        \\

        \cline{3-6}

         & &
        (e) & (f) & (g) & (h) \\ 
        \cline{3-6}
        &
        &
        \mainplot{7}{AE4}{copbin0_vy2b}
        &
        \mainplot{8}{AE4}{copbin0_vy2b}
        &
        \mainplot{2}{AE4}{copbin0_vy2b}
        &
        \mainplot{3}{AE4}{copbin0_vy2b}
        \\

        &
        &
        \iterTplot{7}{AE4}{copbin0_vy2b}
        &
        \iterTplot{8}{AE4}{copbin0_vy2b}
        &
        \iterTplot{2}{AE4}{copbin0_vy2b}
        &
        \iterTplot{3}{AE4}{copbin0_vy2b}
        & \rotatebox[origin=r]{-90}{\footnotesize $10$ periods}
        \\

        & 
    {\footnotesize
      \gpline[gpzero]{0 }%
      \gpline[gptwo]{2 }%
      \gpline[gpfour]{4}%
    }%
    & \multicolumn{4}{r}{\footnotesize\gplinemarked[gpsun]{$B_0$ }\gplinemarked[gpjup]{$B_1$ }\gplinemarked[gpsat]{$B_2$ }}
    \end{tabular}
    \end{adjustbox}
    \caption{Binary solutions for the first case in Figure~\ref{fig:AE4_Binary}; $m_0=m_1=(1-m_2)/2$, $m_2=10^{-6}$. Columns (a)--(h) correspond to the selected solutions marked in the continuation curves of the second row. Large images show the trajectories during one period in the $(x,y)$ plane of the three bodies; in blue body $2$, in orange body $1$, and in green body $0$. Small images show the trajectories for a time propagation of ten periods. }
    \label{fig:AE4_cop0}
\end{figure}

\section{Conclusions and further work}\label{sec:conclusions}

In the present work we introduce a new methodology for the systematic numerical computation of solutions to the general planar Three-Body Problem. This implies a big step in the construction of restricted four-body problem in which to study the motion of a small body under the gravitational effect of three massive bodies in a coherent solution of the TBP, something rarely found in the literature due to the difficulties involved.

The methodology is based on two consecutive alignments of the three bodies, both in positions and in velocities, for what our solutions constitute Relative Periodic Orbits (RPO) of the TBP. Features of the solutions are exploited for reducing the numerical costs and error accumulation in their stability analysis.

Illustrative examples discussed in this paper show the great potential for constructing TBP solutions of many different natures, displaying phenomena compatible with known real systems.

The fact that the stable solutions in these families do not appear as isolated solutions, but they cover wide ranges of the continuation curves, means that there is a robust skeleton for such configurations. Besides, the fact that these solutions are stable means that there are solutions of higher dimension around the computed ones. All these results in a whole robust structure of stable solutions of the TBP for different configurations. Last statement implies that other features of the system, as small inclinations, not exact alignments or slightly different values of the masses and orbital parameters due to uncertainties, would lead to solutions that are part of this structure.

As we mention, although the present work has resulted in a wide analysis of solutions of the TBP, their stability and the diversity of RPO one can find, something not trivial itself, our final goal is the study of the small bodies in restricted four-body problems.

For this reason, we are already advancing in two specific lines. The first one is to reproduce real systems with our methodology to know whether they belong to our Solar system or not. Making comparisons among our solutions and the parameters and phenomena of real systems strengthens the use of the procedure introduced in the present work. The  second line deals with the introduction of the massless particle to our RPO. This allows to construct the corresponding restricted four-body problems and analyses the effect of the third massive body to the stability of triangular points.




\section*{Statements and Declarations}
The authors declare that they have no conflict of interest.

\section*{Acknowledgments}

The authors are deeply grateful to their mentor, Prof. \`A. Jorba, for his invaluable guidance during the initial and intermediate stages of the project, for encouraging them to pursue this work from the very beginning, and for suggesting further directions for exploration.

\medskip

The project has been supported by the Spanish grant
PID2021-125535NB-I00, PID2024-158570NB-100, PID2025-174015NB-I00,  (MICINN/AEI / FEDER, UE), the Catalan grant 2021
SGR 01072, and the Air Force Office of Scientific
Research under award number FA8655-24-1-7059.  
The project that gave rise to these results also received
the support of the fellowship from ``la Caixa'' Foundation (ID
100010434), the fellowship code is LCF/BQ/PR23/11980047. This work
has also been funded through the Severo Ochoa and Mar\'ia de Maeztu
Program for Centers and Units of Excellence in R\&D
(CEX2020-001084-M).

\bibliographystyle{alpha}
\addcontentsline{toc}{section}{References}
{\footnotesize \bibliography{ds, ase}}
\end{document}